\newcommand*\patchAmsMathEnvironmentForLineno[1]{%
  \expandafter\let\csname old#1\expandafter\endcsname\csname #1\endcsname
  \expandafter\let\csname oldend#1\expandafter\endcsname\csname end#1\endcsname
  \renewenvironment{#1}%
     {\linenomath\csname old#1\endcsname}%
     {\csname oldend#1\endcsname\endlinenomath}}%
\newcommand*\patchBothAmsMathEnvironmentsForLineno[1]{%
  \patchAmsMathEnvironmentForLineno{#1}%
  \patchAmsMathEnvironmentForLineno{#1*}}%
\newtheorem{theorem}{Theorem}[section]
\newtheorem{lemma}[theorem]{Lemma}
\newtheorem{proposition}[theorem]{Proposition}
\newtheorem{corollary}[theorem]{Corollary}
\newtheorem{conjecture}[theorem]{Conjecture}
\newtheoremstyle{definition}
  {6pt}
  {6pt}
  {}
  {}
  {\bfseries}
  {.}
  {.5em}
  {}%
\theoremstyle{definition}
\newtheoremstyle{remark}
  {6pt}
  {6pt}
  {}
  {}
  {\bfseries}
  {.}
  {.5em}
  {}%
\theoremstyle{remark}
\newtheorem{remark}[theorem]{Remark}
\newtheoremstyle{example}
  {6pt}
  {6pt}
  {}
  {}
  {\bfseries}
  {.}
  {.5em}
  {}%
\theoremstyle{example}
\newtheorem{example}[theorem]{Example}
\renewcommand\@makefntext[1]{%
\setlength\parindent{1em}%
\noindent
\makebox[1.8em][r]{}{#1}}
\title{Broken circuit complexes of series-parallel networks}
\author{Dinh Van Le}
\address{Universit\"at Osnabr\"uck, Institut f\"ur Mathematik, 49069 Osnabr\"uck, Germany}
\email{dlevan@uos.de}
\begin{document}

\dedicatory{Dedicated to Professor Duong Quoc Viet on the occasion of his 60th birthday}

\begin{abstract}
Let $(h_0,h_1,\ldots,h_s)$ with $h_s\ne0$ be the $h$-vector of the broken circuit complex of a series-parallel network $M$. Let $G$ be a graph whose cycle matroid is $M$. We give a formula for the difference $h_{s-1}-h_1$ in terms of an ear decomposition of $G$. A number of applications of this formula are provided, including several bounds for $h_{s-1}-h_1$, a characterization of outerplanar graphs, and a solution to a conjecture on $A$-graphs posed by Fenton. We also prove that $h_{s-2}\geq h_2$ when $s\geq 4$.
\end{abstract}
\maketitle

\section{Introduction}


Let $M$ be a loopless matroid of rank $r$ on the ground set $E$. For a linear ordering $<$ of $E$, the \emph{broken circuit complex} of $M$ with respect to $<$, denoted by $BC(M,<)$, is the family of those subsets of $E$ that do not contain a \emph{broken circuit}, i.e. a circuit of $M$ with the least element deleted. Introduced by Whitney \cite{Wh} and developed further by Rota \cite{Ro}, Wilf \cite{W}, and Brylawski \cite{Br}, the broken circuit complex is an essential tool in the study of various important combinatorial and homological properties of matroids and hyperplane arrangements; see, e.g. \cite{B,BZ,BrOx,EPY,KR,Le,LR}. An interesting feature of the broken circuit complex is that its \emph{$f$-vector} $f=(f_0,\ldots,f_r)$, where $f_i$ is the number of faces of $BC(M,<)$ of cardinality $i$, encodes the coefficients of the characteristic polynomial of the matroid \cite{Ro}: $\chi(M;x)=\sum_{i=0}^r(-1)^if_ix^{r-i}.$ Given that the characteristic polynomial has a large number of diverse applications (
such as in the study of the critical problem, linear codes, hyperplane arrangements, separation of points by hyperplanes, series-parallel networks, colorings and flows in graphs, and orientations of graphs; see \cite{BrOx2} and \cite{Za} for surveys), information about the $f$-vector of the broken circuit complex could be used to solve many combinatorial problems. Therefore, the $f$-vector of the broken circuit complex is one of the most interesting numerical invariants in matroid theory.

\footnotetext{
	\begin{itemize}
		\item[ ]
		{\it Mathematics  Subject  Classification} (2010): 05B35, 05C83.
		\item[ ]
		{\it Key words and phrases}: broken circuit complex, $h$-vector, matroid, series-parallel network.
	\end{itemize}
}

In this paper we will be concerned with the \emph{$h$-vector}, an invertible linear transformation of the $f$-vector, of the broken circuit complex (see Section \ref{se2.3} for the precise definition). It should be noted that although the $h$-vector and $f$-vector encode the same information, certain properties of the broken circuit complex (such as the Gorenstein and complete intersection properties; see \cite{Le}) are better expressed through the $h$-vector. Let $(h_0(M),\ldots,h_s(M))$ be the $h$-vector of the broken circuit complex of $M$ with the zero entries at the end removed. (The index $s$ is the largest such that $h_{s}(M)\ne 0$. If $M$ is connected then $s=r-1$.) For our purposes it is  convenient to introduce a related vector. Let $\delta_i(M)=h_{s-i}(M)-h_i(M)$ for $i=0,\ldots,\lfloor s/2\rfloor$ and $\delta_i(M)=0$ for $i>\lfloor s/2\rfloor$. We call $(\delta_0(M),\delta_{1}(M),\ldots)$ the \emph{$\delta$-vector} of (the broken circuit complex of) $M$. For the sake of brevity, we will use 
throughout the paper some further notation. Let $\mathcal{S}$ be the class of loopless matroids $M$ with $\delta_0(M)=0$. Members of $\mathcal{S}$ are matroids whose connected components are series-parallel networks. For $i\geq0$, denote by $\mathcal{S}_i$ the subclass of $\mathcal{S}$ consisting of matroids $M$ with $\delta_1(M)=i$. Furthermore, we set $\mathcal{S}_{1^+}=\mathcal{S}-\mathcal{S}_0$.

This paper serves two purposes. The first one has its root in \cite{Le}, in which it is proved that $M$ admits a complete intersection broken circuit complex if and only if $M\in\mathcal{S}_0$. (Recall that a simplicial complex is a complete intersection if its minimal non-faces are pairwise disjoint.) This led us to the following observation: for $M\in\mathcal{S}$, the number $\delta_1(M)$ might have significant implications for the structure of $M$. As the main result of the paper, we make this idea precise by giving a formula for $\delta_1(M)$, when $M$ is a series-parallel network, in terms of an ear decomposition of a (graphical) series-parallel network $G$ whose cycle matroid is $M$. Let us briefly describe this formula. Let $\Pi=(\pi_1,\pi_2,\ldots,\pi_n)$ be an ear decomposition of $G$. Thus $\Pi$ is a partition of the edges of $G$, in which $\pi_1$ is a cycle and for each $i\geq 2$, $\pi_i$ is a path whose end vertices both belong to some $\pi_j$ with $j<i$. When the end vertices of $\pi_i$ are in $\pi_j$ and at least one of them is an internal vertex of $\pi_j$, the nest interval of $\pi_i$ in $\pi_j$ is the subpath of $\pi_j$ between the end vertices of $\pi_i$. For each nest interval $I$, let $\sigma^+(I)=\{I\}\cup\sigma(I)$, where $\sigma(I)$ is the set of all $\pi_i\in\Pi$ whose nest interval is $I$. Denote by $\ell(I)$ the minimal length of a path in $\sigma^+(I)$. Let $p_1(G;\Pi)$ and $p_2(G;\Pi)$ be the number of nest intervals $I$ such that $\ell(I)=1$ and $\ell(I)>1$, respectively. Then we have the following formula
\begin{equation}\label{eq0}
    \delta_1(M)=p_2(G;\Pi). 
\end{equation}
This formula, which will be proved in Theorem \ref{th45}, has plenty of applications. We first derive in Section 4 several bounds for $\delta_1(M)$: an upper bound in terms of $h_1(M)$ (Proposition \ref{pr411}) and lower and upper bounds in terms of the number of vertices of degree at least 3 of $G$ (Proposition \ref{pr414}). Further applications of formula \eqref{eq0} are given in Section 5. We show that members of $\mathcal{S}_1$ are essentially cycle matroids of subdivisions of complete bipartite graphs $K_{2,m}$ for $m\geq 3$ (Proposition \ref{pr51}). We also show that any member of $\mathcal{S}_{1^+}$ possesses a parallel minor in $\mathcal{S}_1$ (Proposition \ref{pr54}). These results together with a characterization of $\mathcal{S}$ due to Brylawski \cite{Br2} give excluded minors for $\mathcal{S}_0$: its members have no minor isomorphic to $U_{2,4}$, $M(K_4)$ and no parallel minor isomorphic to $M(K_{2,m})$ for $m\geq3$ (Theorem \ref{th57}). On specializing to graphs, it is proved that for a graph 
$G$, $M(G)\in \mathcal{S}_0$ if and only if $G$ contains no subgraph that is a subdivision of $K_4$ and the simplification $\overline{G}$ of $G$ contains no vertex-induced subgraph that is a subdivision of $K_{2,3}$ (Theorem \ref{th58}). From this latter result we derive two graph-theoretic consequences: a characterization of outerplanar graphs (Corollary \ref{co59}) and a solution to a conjecture on $A$-graphs posed by Fenton \cite{Fe} (Corollary \ref{co511}).

Formula \eqref{eq0} shows that the number $p_2(G;\Pi)$ is independent of the decomposition $\Pi$. As a counterpart of this formula, we prove in Theorem \ref{th42} that the number $p_1(G;\Pi)$ is also independent of $\Pi$, and furthermore, $p_1(G;\Pi)$ brings information about parallel irreducible decompositions of $G$. This result together with formula \eqref{eq0} yields several characterizations as well as sufficient conditions for the parallel irreducibility of a series-parallel network (Corollaries \ref{lm43}, \ref{co410}, \ref{co413}, Propositions \ref{pr411}, \ref{pr414}).

The second purpose of this paper is to study the following conjecture (see \cite{Sw}):

\begin{conjecture}\label{con11}
The $\delta$-vector of the broken circuit complex of an arbitrary matroid is nonnegative.
\end{conjecture}
This conjecture is related to a long-standing conjecture of Stanley on $h$-vectors of independence complexes and a weaker version thereof due to Hibi. The \emph{independence complex} (or \emph{matroid complex}) of a matroid is the collection of all independent sets of the matroid. This complex contains the broken circuit complex as a subcomplex. In \cite{St1}, Stanley conjectured that the $h$-vector of an independence complex is a \emph{pure $O$-sequence}, i.e. the degree sequence of an order ideal of monomials all of whose maximal elements have the same degree (see also \cite{St2} for more details). A pure $O$-sequence $(h_0,h_1,\ldots,h_s)$ with $h_s\ne0$ has the following properties
\begin{align}
&h_0\leq h_1\leq \cdots\leq h_{\lfloor s/2\rfloor}, \label{eq1:2}\\
&h_i\leq h_{s-i} \text{ for } i=0,\ldots,\lfloor s/2\rfloor. \label{eq1:3}
\end{align}
This result was proved by Hibi \cite{Hi1}, and it led him to propose the following weaker version of Stanley's conjecture \cite{Hi2}: the $h$-vector of an independence complex satisfies inequalities \eqref{eq1:2} and \eqref{eq1:3}. In order to resolve Hibi's conjecture, Chari \cite{C} introduced the notion of \emph{convex ear decomposition} of simplicial complexes, which can be viewed as a higher-dimensional analogue of the notion of ear decomposition of graphs. He showed that the $h$-vectors of simplicial complexes that admit a convex ear decomposition satisfy inequalities \eqref{eq1:2} and \eqref{eq1:3}, and that the independence complex of every coloopless matroid admits such a decomposition, thereby settling Hibi's conjecture.

Note that the set of $h$-vectors of independence complexes is a (strict) subset of the set of $h$-vectors of broken circuit complexes, since the cone on any independence complex is the broken circuit complex of another matroid \cite{Br}. In this context, Conjecture \ref{con11} is an extension of inequality \eqref{eq1:3} in Hibi's conjecture. (We remark that inequality \eqref{eq1:2} for the $h$-vectors of broken circuit complexes would follow from Conjecture \ref{con11} and unimodality of those $h$-vectors.  A recent important result of Huh \cite{Hu} confirms log-concavity (hence unimodality) for the $h$-vectors of broken circuit complexes of matroids representable over a field of characteristic zero.) However, it is worth mentioning that, as Chari noted in the last part of his paper, the broken circuit complex does not in general admit a convex ear decomposition. Therefore, Chari's method does not establish Conjecture \ref{con11}.

In the final section of this paper we present some results motivated by Conjecture \ref{con11}. For general matroids $M$ it is only known that the first two entries $\delta_0(M),\delta_1(M)$ of the $\delta$-vector of $M$ are nonnegative \cite[Section 5]{Sw}. In the case $M\in\mathcal{S}$ we will show that $\delta_2(M)\geq0$ (Theorem \ref{th61}). We also verify Conjecture \ref{con11} for $M\in\mathcal{S}_1$ (Proposition \ref{pr62}).

In order to make the paper self-contained, we include in Section 2 the relevant notions and facts concerning matroids, series-parallel networks, and broken circuit complexes. Section 3, which serves as preparation for Section 4, examines the effect on the number $\delta_1(M)$ of the contraction operation.

\section{Background}

\subsection{Matroids}

We mostly follow Oxley's book \cite{O} for matroid terminology. A {\it matroid} $M=(E,\mathcal{I})$ consists of a non-empty finite \emph{ground set} $E$ and a collection $\mathcal{I}$ of subsets of $E$, called \emph{independent sets}, such that:
\begin{enumerate}
\item
$\emptyset\in\mathcal{I};$
\item
subsets of independent sets are independent;
\item
for every subset $X$ of $E$, all maximal independent subsets of $X$ have the same cardinality $r(X)$, called the \emph{rank} of $X$.
\end{enumerate}

We call a maximal independent set of $M$ a \emph{basis}. Clearly, the matroid $M$ is specified by its bases. The rank $r(E)$ of $E$, which is the common cardinality of the bases, is also called the \emph{rank} of $M$ and is denoted by $r(M)$. A subset of $E$ is \emph{dependent} if it is not in $\mathcal{I}$. Minimal dependent sets are called \emph{circuits}. An element $e\in E$ is a \emph{loop} if $\{e\}$ is a circuit of $M$. A circuit of cardinality $m$ is called an $m$-circuit. Note that the family $\mathcal{C}(M)$ of circuits also determines the matroid $M$: $\mathcal{I}$ consists of subsets of $E$ that do not contain any member of $\mathcal{C}(M)$.

A typical example of a matroid is the \emph{vector matroid} of a matrix $A$ over some field: the ground set $E$ is the set of column vectors of $A$ and the independent sets are the linearly independent subsets of $E$. Another common example is the cycle matroid of a graph: Let $G$ be a graph whose edge set is $E.$ Then the collection of edge sets of cycles of $G$ forms the family of circuits of a matroid $M(G)$ on $E$. We call $M(G)$ the \emph{cycle matroid} of $G$. The bases of $M(G)$ are the edge sets of spanning forests of $G$. Thus, in particular, $r(M(G))=|V|-\omega$, where $V$ and $\omega$ are respectively the vertex set and the number of connected components of $G$. A matroid is called \emph{graphic} if it is isomorphic to the cycle matroid of a graph. (Two matroids $M_1,M_2$ on ground sets $E_1,E_2$ are \emph{isomorphic} if there exists a bijection $\varphi:E_1\to E_2$ such that $X\subseteq E_1$ is independent in $M_1$ if and only if $\varphi(X)$ is independent in $M_2$.) In this paper, we will also deal with uniform matroids which are defined as follows: For nonnegative integers $m\leq n$, the \emph{uniform matroid} $U_{m,n}$ is the matroid on an $n$-element ground set $E$ whose independent sets are the subsets of $E$ of cardinality at most $m$. So the circuits of $U_{m,n}$ are the $(m+1)$-element subsets of $E$. In particular, when $n=m+1$, the matroid $U_{m,m+1}$ has a unique circuit $C_{m+1}=E$. Identifying $U_{m,m+1}$ with $C_{m+1}$, by the term ``circuit'' we will sometimes mean ``matroid with a unique circuit''.

Let $M$ be a matroid on the ground set $E$. The \emph{dual} $M^*$ of $M$ is the matroid on the ground set $E$ whose bases are the complements of the bases of $M$. For example, $U_{m,n}^*=U_{n-m,n}$. It is well-known that if $G$ is a vl.,he
ar graph, then $M(G)^*\cong M(G^*)$, where $G^*$ is a geometric dual of $G$. The loops of $M^*$ are called \emph{coloops} of $M$. Clearly, $e$ is a coloop of $M$ if and only if $e$ is contained in every basis of $M$ if and only if $e$ is not contained in any circuit of $M$.

Let $X$ be a subset of $E$. The \emph{deletion} of $X$ from $M$, denoted $M-X$, is the matroid on the ground set $E-X$ whose circuits are those members of $\mathcal{C}(M)$ which are contained in $E-X$. The \emph{contraction} of $X$ from $M$ is given by $M/X=(M^*-X)^*$. One may check that the circuits of $M/X$ are the minimal non-empty members of $\{C-X:C\in\mathcal{C}(M)\}$. Note that deletion and contraction for matroids generalize the corresponding operations for graphs, i.e. $M(G)-X=M(G-X)$ and $M(G)/X=M(G/X)$ for any graph $G$ and any set $X$ of edges of $G$. Note also that the operations of deletion and contraction commute: for disjoint subsets $X$ and $Y$ of $E$, one has $(M-X)/Y=M/Y-X$. A \emph{minor} of $M$ is a matroid which can be obtained from $M$ by a sequence of deletions and contractions. So every minor of $M$ has the form $(M-X)/Y$, where $X$, $Y$ are disjoint subsets of $E$.

Two elements $e,f\in E$ are said to be \emph{parallel} if they form a circuit of $M$. A \emph{parallel class} of $M$ is a maximal subset of $E$ in which any two distinct elements are parallel and no element is a loop. Obviously, if $X$ is a parallel class of $M$, then for any $e\in X$ every element of $X-e$ is a loop in $M/e$. Conversely, if $X$ contains no loops and for some $e\in X$ every element of $X-e$ is a loop in $M/e$, then $X$ is contained in a parallel class of $M$. A parallel class of $M^*$ is called a \emph{series class} of $M$. If $Y$ is a series class of $M$, then for any $e\in Y$ every element of $Y-e$ is a coloop in $M-e$. (Thus, for any series class $Y$ and any circuit $C$ of $M$, either $Y\subset C$ or $Y\cap C=\emptyset$.) Conversely, if $Y$ is contained in a circuit of $M$ and for some $e\in Y$ every element of $Y-e$ is a coloop in $M-e$, then $Y$ is a subset of a series class of $M$. A parallel or series class is \emph{non-trivial} if it contains at least two elements. A matroid is
called \emph{simple} if it has no loops and no non-trivial parallel classes. Given  an arbitrary matroid $M$
we may associate to it a simple matroid by first deleting all the loops from $M$ and then, for every parallel class $X$, deleting all but one distinguished element of $X$. The matroid obtained, denoted by $\overline{M}$, is uniquely determined up to the choice of the distinguished elements and is called the \emph{simplification} of $M$. Evidently, one may also construct the \emph{simplification} $\overline{G}$ of a given graph $G$ in the same manner as above, and moreover, one has $\overline{M(G)}=M(\overline{G}).$

Let $M_1$ and $M_2$ be matroids on disjoint sets $E_1$ and $E_2$. Their \emph{direct sum} $M_1\oplus M_2$ is the matroid on the ground set $E_1\cup E_2$ whose circuits are the circuits of $M_1$ and the circuits of $M_2$. A matroid is called \emph{connected} if it is not the direct sum of two smaller matroids; otherwise it is \emph{disconnected}. Every matroid $M$ can be decomposed uniquely (up to order) as a direct sum $M=M_1\oplus\cdots\oplus M_k$ of connected matroids; we call $M_1,\ldots,M_k$ the \emph{connected components} of $M$. It should be noted that if $G$ is a loopless connected graph with at least 3 vertices, then the cycle matroid $M(G)$ is connected if and only if $G$ is \emph{2-connected}, i.e. $G$ remains connected after deleting any vertex (see \cite[Proposition 4.1.8]{O}).

\subsection{Series and parallel connection}

The operations of series and parallel connections of graphs have their origin in electrical-network theory. These operations were generalized to matroids by Brylawski \cite{Br2}. Here we briefly summarize some of their properties. The reader is referred to \cite{Br4} or \cite[5.4, 7.1]{O} for further details.

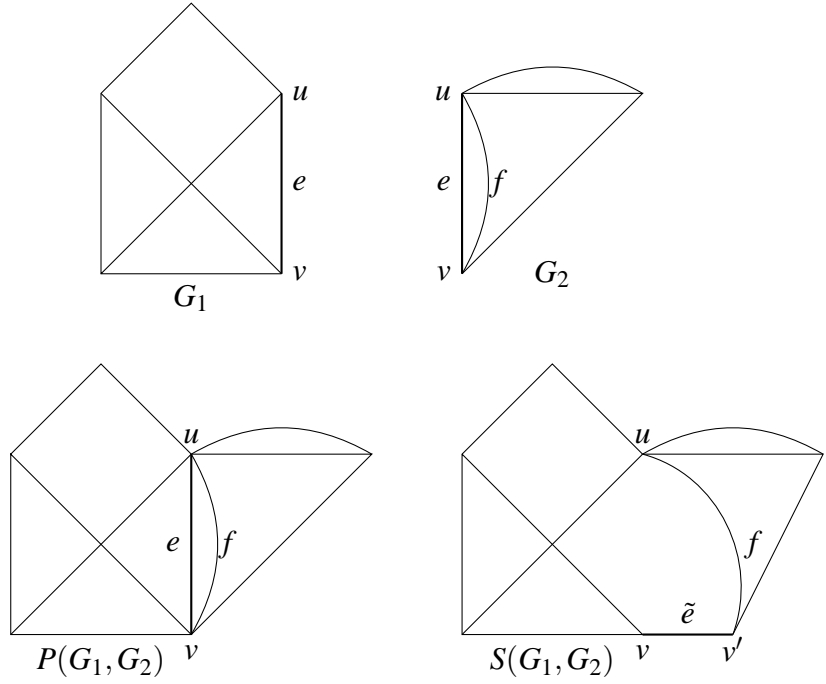
\begin{figure}[htb]
 \begin{tikzpicture}[scale=1.2]
    \draw [thick](2,0) -- (2,2);
    \draw (0,0) -- (2,2) -- (1,3) -- (0,2)--(0,0)-- (2,0);
    \draw (2,0) -- (0,2);
    \draw (2,0)  node [right] {$v$};
    \draw (2,2)  node [right] {$u$};
    \draw (2,1)  node [right] {$e$};
    \node[below] at (1,0)  {$G_1$};

    \draw (4,0) -- (6,2) -- (4,2);
    \draw [thick] (4,2)--(4,0);
    \draw (6,2) to [out=150,in=30] (4,2) to [out=300,in=60] (4,0);
    \draw (4,0)  node [left] {$v$};
    \draw (4,2)  node [left] {$u$};
    \draw (4,1)  node [left] {$e$};
    \draw (4.4,1)  node {$f$};
    \node at (5,0)  {$G_2$};

    \draw (-1,-2) -- (0,-1) -- (1,-2) -- (3,-2)--(1,-4)-- (-1,-4)--(-1,-2)--(1,-4);
    \draw (-1,-4) -- (1,-2);
    \draw [thick] (1,-2)--(1,-4);
    \draw (3,-2) to [out=150,in=30] (1,-2) to [out=300,in=60] (1,-4);
    \draw (1,-4)  node [below] {$v$};
    \draw (1,-2)  node [above] {$u$};
    \draw (1,-3)  node [left] {$e$};
     \draw (1.4,-3)  node {$f$};
    \node[below] at (0,-4)  {$P(G_1,G_2)$};

     \draw (7,-4) -- (8,-2) -- (6,-2) -- (5,-1)--(4,-2)-- (4,-4)--(6,-4)--(4,-2);
    \draw (4,-4) -- (6,-2);
    \draw [thick] (6,-4)--(7,-4);
    \draw (8,-2) to [out=150,in=30] (6,-2) to [out=345,in=70] (7,-4);
    \draw (6,-4)  node [below] {$v$};
    \draw (7,-3.9)  node [below] {$v'$};
    \draw (6,-2)  node [above] {$u$};
    \draw (6.5,-4)  node [above] {$\tilde{e}$};
    \draw (7,-3)  node [right] {$f$};
    \node[below] at (5,-4)  {$S(G_1,G_2)$};
   \end{tikzpicture}
\caption{Series and parallel connections of graphs.}\label{fig1}
\end{figure}

Let us first recall the definitions of series and parallel connections of two graphs. For $i=1,2$, let $G_i$ be a graph with vertex set $V_i$ and edge set $E_i$. Assume that $G_1$ and $G_2$ have only a common edge $e$ and two common vertices $u,v$ which are the end vertices of $e$. Then the \emph{parallel connection} of $G_1$ and $G_2$ with respect to the \emph{baseedge} $e$ is merely the union of $G_1$ and $G_2$, i.e. the graph with vertex set $V_1\cup V_2$ and edge set $E_1\cup E_2$. We denote this graph by $P(G_1,G_2)$. To define the series connection of $G_1$ and $G_2$, we first form a copy $G_2'=(V_2',E_2')$ of $G_2$ by just renaming the vertex $v$ to $v'$ and the edge $e$ to $e'$. Then we remove the edge $e$ from $G_1$ and the edge $e'$ from $G_2'$. Finally, we add a new edge $\tilde{e}$ joining $v$ and $v'$. The \emph{series connection} of $G_1$ and $G_2$ with respect to $e$, denoted $S(G_1,G_2)$, is the graph with vertex set $V_1\cup V_2'$ and edge set $(E_1-e)\cup (E_2'-e')\cup \tilde{e}$ (see 
Figure \ref{fig1}).

Next we extend the above constructions to matroids. Let $M_1$ and $M_2$ be matroids on ground sets $E_1$ and $E_2$ with $E_1\cap E_2=\{e\}$. Assume that $e$ is neither a loop nor a coloop of $M_1$ or $M_2$. As before, we use the notation $\mathcal{C}(M)$ to denote the family of circuits of a matroid $M$. The \emph{series connection} $S(M_1,M_2)$ and the \emph{parallel connection} $P(M_1,M_2)$ of $M_1,M_2$ with respect to the \emph{basepoint} $e$ are the matroids on the ground set $E_1\cup E_2$ whose families of circuits are respectively:
\[\begin{aligned}
\mathcal{C}(S(M_1,M_2))&=\mathcal{C}(M_1-e)\cup\mathcal{C}(M_2-e)\cup\{C_1\cup C_2: e\in C_i\in\mathcal{C}(M_i)\ \text{ for }\ i=1,2\},\\
  \mathcal{C}(P(M_1,M_2))&=\mathcal{C}(M_1)\cup\mathcal{C}(M_2)\cup\{C_1\cup C_2-e: e\in C_i\in\mathcal{C}(M_i)\ \text{ for }\ i=1,2\}.
  \end{aligned}\]

Moreover, we set

\[
   S(M_1,M_2)=S(M_2,M_1)=
\begin{cases}
 (M_1/e)\oplus M_2\ \text{if } e \ \text{is a loop of } M_1,\\
M_1\oplus (M_2-e)\ \text{if } e \ \text{is a coloop of } M_1;
\end{cases}\]

\[P(M_1,M_2)=P(M_2,M_1)=
\begin{cases}
 M_1\oplus (M_2/e)\ \text{if } e \ \text{is a loop of } M_1,\\
(M_1-e)\oplus M_2\ \text{if } e \ \text{is a coloop of } M_1.
\end{cases}
\]

 The above constructions of series and parallel connection of matroids generalize the corresponding ones for graphs, in the sense that for any two graphs $G_1,G_2$ one has
 \[P(M(G_1),M(G_2))\cong M(P(G_1,G_2)),\quad S(M(G_1),M(G_2))\cong M(S(G_1,G_2))\]
  whenever $P(G_1,G_2)$ and $S(G_1,G_2)$ make sense.

It is possible to define series and parallel connections of more than two matroids, just by iterating the above constructions. Let $M_1,\ldots,M_n$ be matroids on ground sets $E_1,\ldots,E_n$ such that $E_{i+1}\cap(\bigcup_{j=1}^i E_j)=\{e_i\}$ for $i=1,\ldots,n-1$, in which $e_1,\ldots,e_{n-1}$ need not be distinct. Then we can form $P(M_1,M_2)$, $P(P(M_1,M_2),M_3)$ and so on. The last matroid obtained in this way, denoted by $P(M_1,\ldots,M_n)$, is called the \emph{(iterated) parallel connection} of $M_1,\ldots,M_n$ with respect to the basepoints $e_1,\ldots,e_{n-1}$. The series connection of $M_1,\ldots,M_n$ is defined similarly. Of course, iterated series and parallel connections of graphs can also be constructed in the same manner.

Special cases of series and parallel connections are series and parallel extensions: for two matroids $M$ and $N$, we say that $M$ is a \emph{series extension} (respectively, \emph{parallel extension}) of $N$ and $N$ a \emph{series contraction} (respectively, \emph{parallel deletion}) of $M$ if $M=S(N,C_2)$ (respectively, $M=P(N,C_2)$), where $C_2$ is a 2-circuit. For example, every loopless matroid is an iterated parallel extension of its simplification. Series extension and parallel extension for graphs mean subdividing an edge and duplicating an edge, respectively.

We now mention some other notions related to series and parallel connections which will be used later. Let $M$ and $N$ be matroids. We call $N$ a \emph{series minor} (respectively, \emph{parallel minor}; \emph{series-parallel minor}) of $M$ if $N$ can be obtained from $M$ by a sequence of deletions and series contractions (respectively, contractions and parallel deletions; series contractions and parallel deletions). Evidently, $N$ is a series minor of $M$ if and only if $N^*$ is a parallel minor of $M^*$. Suppose $M$ is connected with ground set $E$. Then $M$ is called \emph{parallel irreducible at $e\in E$} if either $M$ is trivial (i.e. $|E|=1$) or $M$ is not a parallel connection of two non-trivial matroids with respect to the basepoint $e$. We say that $M$ is \emph{parallel irreducible} if it is parallel irreducible at every element of $E$.

\begin{lemma}\label{lm22}
 Let $M$ and $M'$ be matroids on ground sets $E$ and $E'$ with $E\cap E'=\{e\}$. Then the following statements hold.
\begin{enumerate}
\item
$P(M,M')$ is loopless (respectively, simple) if and only if both $M$ and $M'$ are loopless (respectively, simple).

\item If $|E|,|E'|\geq 2$, then
$S(M,M')$ (respectively, $P(M,M')$) is connected if and only if both $M$ and $M'$ are connected.

\item
$P(M,M')/e=M/e\oplus M'/e$. If $f\in E-e$, then
\[\begin{aligned}
  S(M,M')-f&=S(M-f,M'), \quad S(M,M')/f=S(M/f,M'),\\
   P(M,M')-f&=P(M-f,M'), \quad P(M,M')/f=P(M/f,M').
  \end{aligned}\]

\item
If $e$ is neither a loop nor a coloop of $M$ or $M'$, then $M=P(M,M')-(E'-e)$. Thus $M$ and $M'$ are submatroids of $P(M,M')$.

\item
Assume that $M$ is the cycle matroid of a graph $G$. If $M(G)\cong P(M_1,M_2)$, where the basepoint of the parallel connection is neither a loop nor a coloop of $M_1$ or $M_2$, then there exist subgraphs $G_1,G_2$ of $G$ such that $M_i\cong M(G_i)$ for $i=1,2$ and $G=P(G_1,G_2)$.
\end{enumerate}
From now on suppose that $M$ is a non-trivial connected matroid.
\begin{enumerate}[\rm(vi)]
 \item
$M$ is parallel irreducible at $f\in E$ if and only if $M/f$ is connected. Hence $M$ is parallel irreducible if and only if $M/f$ is connected for every $f\in E$.

\item
For every $f\in E$, the matroid $M$ can be decomposed as a parallel connection $M=P(M_1,\ldots,M_k)$ with respect to the basepoint $f$, where $M_i$ is non-trivial and parallel irreducible at $f$ for $i=1,\ldots,k$. The decomposition is unique up to a permutation of the components.

\item
$M$ has an iterated parallel decomposition $M=P(N_1,\ldots,N_s)$, where the $N_i$ are non-trivial and parallel irreducible. Moreover, if $M$ is simple, then the decomposition is unique up to a permutation of the components.
\end{enumerate}
\end{lemma}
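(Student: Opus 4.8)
The plan is to read everything off the explicit circuit descriptions of $S(M,M')$ and $P(M,M')$ (together with the degenerate direct-sum formulas and the self-dual identity $S(M_1,M_2)^{*}\cong P(M_1^{*},M_2^{*})$), and to establish the decomposition statements by induction on the size of the ground set. Throughout, write $M|X$ for the submatroid $M-(E\setminus X)$.

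Parts (i)--(iv) are direct verifications with circuits; I treat the main case where $e$ is neither a loop nor a coloop, the degenerate cases of (i) and (iii) reducing to the direct-sum formulas. For (i): a one- or two-element circuit of $P(M,M')$ cannot be one of the sets $C_1\cup C_2-e$ (it has at least two elements, and if exactly two then each $C_i$ is a $2$-circuit through $e$, so $M$ or $M'$ is not simple), so the small circuits of $P(M,M')$ are exactly those of $M$ and $M'$. For (iii): contracting $e$ destroys every set $(C_1-e)\cup(C_2-e)$, since it contains the circuit $C_1-e$ of $M/e$, leaving $\mathcal C(M/e)\cup\mathcal C(M'/e)$; the deletion/contraction formulas at $f\ne e$ hold because the displayed circuit families transform in the evident way when one deletes or contracts an element other than $e$. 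For (iv): a circuit of $P(M,M')$ contained in $E$ must be a circuit of $M$, for otherwise it would force a circuit of $M'$ inside $\{e\}$, impossible as $e$ is not a loop; symmetrically for $M'$. For (ii): if $M=A\oplus B$ with $e\in A$ then $P(M,M')=P(A,M')\oplus B$ is separable (likewise if $M'$ is), while if $P(M,M')=X\oplus Y$, restricting to $E$ and using that $M$ is connected with $|E|\ge 2$ forces $E$ into one of $X,Y$, and similarly $E'$; since $e\in E\cap E'$ they fall on the same side, so the other summand is empty. The series assertions of (ii) then follow by dualizing, connectedness being self-dual.

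For (v): by (iv) we may take $M_i$ to be the submatroid $M(G)|E_i$, with $E_1\cup E_2=E$ and $E_1\cap E_2=\{e\}$; since restricting a cycle matroid to an edge set yields the cycle matroid of the corresponding spanning subgraph, $M_i\cong M(G_i)$ where $G_i$ is the subgraph of $G$ on $E_i$. It remains to check that $G_1$ and $G_2$ meet only in $e$ and its two endpoints --- equivalently, that any cycle of $G$ using edges from both $E_1\setminus e$ and $E_2\setminus e$ passes through both endpoints of $e$ --- which is the standard graphical interpretation of parallel connection; see \cite[Section 7.1]{O} and \cite{Br4}. Now assume $M$ non-trivial and connected. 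In (vi), if $M=P(M_1,M_2)$ with both $M_i$ non-trivial and basepoint $f$, then $f$ is neither a loop nor a coloop of either $M_i$ (else $P$ would be a direct sum with a non-empty summand, contradicting connectedness), so $M/f=M_1/f\oplus M_2/f$ by (iii), with both summands non-empty, hence disconnected. Conversely, if $M/f=N_1\oplus N_2$ with $A_i:=E(N_i)\ne\emptyset$, set $M_i:=M|(A_i\cup\{f\})$; one checks $M_i/f=N_i$ and that $f$ is neither a loop of $M_i$ (as $M$ is loopless) nor a coloop of $M_i$ (a circuit of $M$ through $f$ and a point of $A_i$, which exists by connectedness, yields after deleting $f$ a circuit of $M/f$ meeting $A_i$, hence contained in $A_i$), and then a short rank computation based on the fact that $C_i\setminus f$ is a circuit of $N_i$ for every circuit $C_i$ of $M_i$ through $f$ gives $\mathcal C(M)=\mathcal C(M_1)\cup\mathcal C(M_2)\cup\{C_1\cup C_2-f\}$, i.e. $M=P(M_1,M_2)$ at $f$; the last sentence of (vi) is immediate. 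For (vii), existence follows by repeatedly splitting off parallel connections at $f$ (the ground sets strictly shrink, since $|E(M_1)|+|E(M_2)|=|E(M)|+1$ with both parts at least $2$), and uniqueness because, iterating (iii), $M/f=\bigoplus_i M_i/f$ with each $M_i/f$ connected by (vi); thus the sets $E(M_i)\setminus f$ are exactly the (canonical) connected components of $M/f$, and, iterating (iv), $M_i=M|E(M_i)$.

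For (viii): existence again follows by recursion --- split $M=P(M',M'')$ at some basepoint whenever $M$ is not parallel irreducible, both parts being connected by (ii) and strictly smaller. Uniqueness for simple $M$ I would prove by induction on $|E(M)|$: if $M$ is parallel irreducible the decomposition is forced; otherwise pick $f$ with $M/f$ disconnected and let $M=P(M_1,\dots,M_k)$ ($k\ge 2$) be the canonical decomposition at $f$ from (vii), each $M_i=M|E(M_i)$ being simple and strictly smaller, hence (by induction) with a unique parallel-irreducible decomposition; these combine to one for $M$. Given any parallel-irreducible decomposition $M=P(L_1,\dots,L_t)$, simplicity of $M$ means no element is parallel to $f$, so contracting $f$ turns no basepoint into a loop; one then checks that in $M/f$ the edge set of each $L_p$ lies in a single connected component, i.e. $E(L_p)\subseteq E(M_{i(p)})$ for a well-defined $i(p)$, so $\{L_p:i(p)=i\}$ is a parallel-irreducible decomposition of $M_i$, equal by induction to the one already found; taking the union over $i$ gives uniqueness. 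The step I expect to be the genuine obstacle is precisely this last one: making rigorous that a parallel-irreducible decomposition of a simple matroid refines the canonical decomposition at a chosen basepoint, in particular tracking how a shared basepoint is distributed among several pieces and pinpointing where simplicity is used --- the statement really does fail without simplicity, e.g. $U_{1,3}=P(C_2,C_2)$ admits inequivalent decompositions because contracting any element of $U_{1,3}$ makes the other two loops. A secondary point requiring care is the $\subseteq$ inclusion of the circuit family in the converse of (vi).
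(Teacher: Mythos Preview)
Your treatment of (i)--(vii) is more detailed than the paper's, which simply cites Brylawski for (ii)--(iv), (vi), (vii) and says (i) is trivial and (v) follows easily from (iv). Your arguments there are fine; the secondary worry you flag about the converse of (vi) is real but standard, and the paper does not spell it out either.

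For (viii) both you and the paper argue by induction, but the paper organises the key step differently and this resolves exactly the obstacle you identified. The paper introduces
\[
F(M)=\{f\in E:\ M/f\text{ is separable}\}
\]
and proves the following lemma: \emph{for any parallel-irreducible decomposition $M=P(N_1,\dots,N_s)$ of a simple $M$, the set of basepoints equals $F(M)$.} The inclusion $\{e_1,\dots,e_{s-1}\}\subseteq F(M)$ is (iii)/(vi). For the reverse inclusion, if $f\in F(M)\setminus\{e_1,\dots,e_{s-1}\}$, say $f\in E_1$, then by (iii) $M/f=P(N_1/f,N_2,\dots,N_s)$; simplicity forces $|E_1|\ge 3$, so $N_1/f$ is non-trivial and connected (as $N_1$ is parallel irreducible and $f$ is not a basepoint), whence by (ii) $M/f$ is connected --- a contradiction. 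This is precisely where simplicity enters: it guarantees each $N_i$ has at least three elements, so contracting a non-basepoint element keeps $N_i/f$ non-trivial.

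Once one knows $F(M)$ equals the basepoint set, your ``refinement'' step becomes painless: the chosen $f\in F(M)$ is literally one of the $e_j$, say $e_1$, and the pieces $N_1,\dots,N_s$ can be grouped according to which side of $e_1$ they lie on, yielding a decomposition $M=P(M_1,\dots,M_k)$ at $e_1$ into pieces parallel-irreducible at $e_1$. By (vii) these $M_i$ are uniquely determined. The paper then inducts on $|F(M)|$ (not on $|E|$), observing that $F(M_i)\subseteq F(M)\setminus\{e_1\}$, so each $M_i$ has a unique parallel-irreducible decomposition by induction, and these are subfamilies of the $N_j$.

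In short: your plan is correct, and your diagnosis of the hard step is accurate. The missing ingredient is the characterisation of $F(M)$ as the basepoint set of \emph{every} parallel-irreducible decomposition; with that in hand, the refinement you want is automatic rather than something that needs to be checked componentwise.
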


\begin{proof}
(i) follows trivially from the definition of parallel connection. For the proof of (ii)--(iv), (vi), (vii) see \cite[Propositions 4.6, 4.7, 5.5-5.9]{Br2}; see also \cite[7.1]{O}. From (iv) one easily gets (v). Let us prove (viii). The existence of a parallel irreducible decomposition of $M$ follows by repeatedly applying (vii). We now prove the uniqueness of this decomposition when $M$ is simple. Let
$F(M)=\{f\in E: M/f\ \text{is disconnected}\}.$
The argument is by induction on $|F(M)|$.

We first show that $F(M)$ is the set of basepoints of any parallel irreducible decomposition of $M$. Indeed, consider such a decomposition $M=P(N_1,\ldots,N_s)$. Let $E_i$ be the ground set of $N_i$ and assume that $E_{i+1}\cap(\bigcup_{j=1}^i E_j)=\{e_i\}$ for $i=1,\ldots,s-1$. From (iii) (or (vi)) we immediately get $\{e_1,\ldots,e_{s-1}\}\subseteq F(M)$. If there exists $f\in F(M)-\{e_1,\ldots,e_{s-1}\}$, for which we may assume $f\in E_1$, then from (iii) we obtain $M/f=P(N_1/f,N_2,\ldots,N_s)$. Note that $N_1/f$ is connected by (vi). Note also that $|E_1|\geq 3$ since $N_1$ is simple (by (i)) and non-trivial.
 So $|E_1-f|\geq 2$ and hence it follows from (ii) that $M/f$ is connected, a contradiction. Thus we must have $F(M)=\{e_1,\ldots,e_{s-1}\}$.

Now we can ``group'' the components $N_1,\ldots,N_s$ to get a parallel decomposition of $M$ into matroids which are parallel irreducible at $e_1$: $M=P(M_1,\ldots,M_k)$. (For example, for the decomposition $M=P(N_1,\ldots,N_4)$ with $E_2\cap E_1=\{e_1\}$, $E_3\cap (E_1\cup E_2)=E_3\cap E_2=\{e_2\}$, $E_4\cap (E_1\cup E_2\cup E_3)=E_4\cap E_1=\{e_3\}$ and $e_1,e_2,e_3$ pairwise distinct, we may write $M=P(M_1,M_2)$, where $M_1=P(N_1,N_4)$ and $M_2=P(N_2,N_3)$ are parallel irreducible at $e_1$.) By (vii), the matroids $M_1,\ldots,M_k$ are uniquely determined. Moreover, these matroids are simple by (i). Observe that $F(M_i)\subseteq F(M)-e_1$ for $i=1,\ldots,k$. Since each $M_i$ is a parallel connection of some of the matroids $N_1,\ldots,N_s$ (and this is of course a parallel irreducible decomposition of $M_i$), the uniqueness of $N_1,\ldots,N_s$ follows by induction.
\end{proof}

\begin{remark}\label{rm22}
(i) By Lemma \ref{lm22}(v), results and notions on matroids involving only parallel connection can be easily specialized to graphs. In particular, we will also use the terms ``parallel irreducible'', ``parallel irreducible decomposition'' for graphs without explicit explanation.

(ii) When a connected matroid $M$ is not simple, the parallel irreducible decomposition of $M$ as in Lemma \ref{lm22}(viii) is no longer unique. For instance, if $M$ contains a 2-circuit $C_2=\{e_1,e_2\}$, then $M=P(M_1,C_2)=P(M_2,C_2)$, where $M_i=M-e_i$ for $i=1,2$. However, it is not hard to show that if  $M=P(N_1,\ldots,N_s)$ and $M=P(N_1',\ldots,N_t')$ are two parallel irreducible decompositions of $M$, then $s=t$ and (after reindexing) $N_i\cong N_i'$ for $i=1,\ldots,s$.
\end{remark}

A graph $G$ is called a \emph{(graphical) series-parallel network} if it is a \emph{block} (i.e. a connected graph whose cycle matroid is connected) and can be obtained from the complete graph $K_2$ by subdividing and duplicating edges. Extending this notion to matroids, we call a connected matroid $M$ a \emph{series-parallel network} if it can be constructed from a coloop by a sequence of series and parallel extensions.
Clearly, a matroid is a series-parallel network if and only if it is the cycle matroid of a graphical series-parallel network. We list several characterizations of series-parallel networks in the next lemma.

\begin{lemma}\label{lm24}
 Let $M$ be a loopless connected matroid and $G$ a loopless graph with at least one edge. Then
\begin{enumerate}
 \item
$G$ is a series-parallel network if and only if it is a block having no subgraph that is a subdivision of $K_4$.

\item
The following conditions are equivalent:
\begin{enumerate}
 \item
$M$ is a series-parallel network;

\item
every connected minor of $M$ is a series-parallel network;

\item
for any connected minor $N$ of $M$ on the ground set $E(N)$ with $|E(N)|>2$, and any element $e\in E(N)$, either $N-e$ or $N/e$ is disconnected;

\item
$M$ has no minor isomorphic to $U_{2,4}$ or $M(K_4)$;

\item
$\beta(M)=1$.
\end{enumerate}
\end{enumerate}
\end{lemma}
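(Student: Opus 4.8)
The plan is to prove the graph statement (i) by induction on the number of edges, and then to obtain the matroid equivalences in (ii) by combining (i) with Brylawski's results on series and parallel connection and with Crapo's $\beta$-invariant.

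\emph{Part (i).} For the ``only if'' direction, a series-parallel network is a block by definition, so only the absence of a subdivision of $K_4$ needs proof, and I would induct on $|E(G)|$ along the defining construction. The base case $K_2$ is clear; if $G$ is obtained from a smaller series-parallel network $G'$ by subdividing an edge, then a subdivision of $K_4$ in $G$ re-contracts to one in $G'$, and if $G$ is obtained from $G'$ by duplicating an edge $e$ into a parallel pair $\{e,f\}$, then a subdivision of $K_4$ in $G$ uses at most one of $e,f$ (in $K_4$ any two branch vertices are joined by a single branch path) and so, after replacing $f$ by $e$ if necessary, already lies in $G'$. For the ``if'' direction, let $G$ be a block with no subdivision of $K_4$ and induct on $|E(G)|$: if $|E(G)|\le 2$ then $G$ is $K_2$ or a parallel pair; if $G$ has a vertex of degree $2$, suppress it to get a smaller block with no subdivision of $K_4$, a series-parallel network by induction, of which $G$ is a series extension; if $G$ has a non-trivial parallel class, delete one of its edges (keeping $G$ a block since $|E(G)|\ge 3$) to get a series-parallel network of which $G$ is a parallel extension. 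The only remaining case is that $G$ is simple, $2$-connected and has minimum degree $\ge 3$, and then $G$ contains a subdivision of $K_4$ by Dirac's classical theorem, contradicting the hypothesis; so this case is vacuous. This last step is the one I would cite rather than reprove, and it is the main point of (i).

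\emph{Part (ii), main chain.} I would close the loop via (a) $\Rightarrow$ (e) $\Rightarrow$ (d) $\Rightarrow$ (a), then add (a) $\Leftrightarrow$ (b) and (a) $\Rightarrow$ (c) $\Rightarrow$ (d). For (a) $\Rightarrow$ (e): Crapo's $\beta$-invariant is multiplicative under series and parallel connection, $\beta(S(M_1,M_2))=\beta(P(M_1,M_2))=\beta(M_1)\beta(M_2)$, while $\beta(U_{1,1})=\beta(U_{1,2})=1$, so $\beta(M)=1$ by induction along the construction of $M$. For (e) $\Rightarrow$ (d): one has $\beta(U_{2,4})=\beta(M(K_4))=2$, and $\beta$ cannot decrease when passing from a connected minor back up to $M$ — this follows from the deletion--contraction identity $\beta(M)=\beta(M-e)+\beta(M/e)$ (valid for connected $M$ with at least two elements), non-negativity of $\beta$, and Tutte's theorem that one of $M-e$, $M/e$ stays connected — so a minor isomorphic to $U_{2,4}$ or $M(K_4)$ would force $\beta(M)\ge 2$. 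The implication (d) $\Rightarrow$ (a) is exactly Brylawski's excluded-minor characterization of series-parallel networks \cite{Br2}; this (or, equivalently, (e) $\Rightarrow$ (a)) is the genuinely hard input, which I would cite rather than reprove, and it is the main obstacle of the whole lemma.

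\emph{Part (ii), remaining implications.} With (a) $\Leftrightarrow$ (d) $\Leftrightarrow$ (e) in hand, (b) $\Rightarrow$ (a) is trivial, and (a) $\Rightarrow$ (b) follows since a connected minor $N$ of a series-parallel network still has no $U_{2,4}$- or $M(K_4)$-minor by (d), hence is a series-parallel network by (d) $\Rightarrow$ (a). For (a) $\Rightarrow$ (c) I would prove directly, by induction on $|E(M)|$, that every series-parallel network $M$ with $|E(M)|>2$ has the property required of $N$ in (c): the base case $|E(M)|=3$ (so $M$ is $U_{1,3}$ or $U_{2,3}$) is immediate; for the step, write $M=P(M_0,C_2)$ or $M=S(M_0,C_2)$, observe that contracting (resp.\ deleting) either element of the adjoined $C_2$ makes the other a loop (resp.\ coloop), so the corresponding $M/e$ (resp.\ $M-e$) is separable, and for an element $e$ of $M_0$ other than the basepoint use that $M-e=P(M_0-e,C_2)$ or $M/e=P(M_0/e,C_2)$ by Lemma \ref{lm22}(iii), which is separable whenever $M_0-e$ or $M_0/e$ is — true by induction — since a series or parallel connection with a separable factor is separable. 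Applying this to the connected minors of $M$, which are series-parallel networks by (b), gives (a) $\Rightarrow$ (c). Finally (c) $\Rightarrow$ (d) I would prove by contraposition: a minor $N$ of $M$ isomorphic to $U_{2,4}$ or $M(K_4)$ is itself a connected minor with more than two elements such that both $N-e$ and $N/e$ are connected for every $e$ (a direct check: $U_{2,4}-e\cong U_{2,3}$ and $U_{2,4}/e\cong U_{1,3}$ are connected, and $M(K_4)-e$, $M(K_4)/e$ are the cycle matroids of $K_4$ with an edge deleted or contracted, which remain $2$-connected graphs), so (c) fails. Everything outside (d) $\Rightarrow$ (a) thus reduces to the induction of part (i), standard facts about $\beta$, or short deductions via Lemma \ref{lm22}.
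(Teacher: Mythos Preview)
The paper does not prove this lemma; it simply cites \cite[Theorem 5.4.10]{O} and \cite[Proposition 7.5, Theorem 7.6]{Br2}. Your proposal goes further and actually sketches the arguments behind those citations, correctly flagging the two non-elementary inputs---Dirac's theorem for part (i) and Brylawski's excluded-minor theorem (d) $\Rightarrow$ (a) for part (ii)---as results to quote rather than reprove, and filling in the easier implications soundly.

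One step deserves tightening. In your argument for (e) $\Rightarrow$ (d), the claim that $\beta$ cannot decrease when passing from a connected minor $N$ up to $M$ does not follow from Tutte's theorem alone. Tutte guarantees that for each $e$ one of $M-e$, $M/e$ is connected, but not that the connected one still has $N$ as a minor: if $e$ must be deleted (rather than contracted) to reach $N$, you need $M-e$ specifically to be connected, and Tutte might instead hand you $M/e$. What is actually required is the connected chain theorem of Brylawski and Seymour (see Oxley's book): if $N$ is a connected proper minor of a connected matroid $M$, then some $e\in E(M)\setminus E(N)$ has the property that one of $M-e$, $M/e$ is connected \emph{and} has $N$ as a minor. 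With that cited, your inductive monotonicity argument for $\beta$ goes through; the gap is one of citation rather than substance.
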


In the above lemma, $\beta(M):=(-1)^{r(E)}\sum_{X\subseteq E}(-1)^{|X|}r(X)$ is the \emph{beta invariant} of a matroid $M$ on the ground set $E$. This invariant was introduced by Crapo \cite{Cr} and discussed further in, e.g. \cite{Br2,O3,Za}. For the proof of the lemma we refer to \cite[Theorem 5.4.10]{O} and \cite[Proposition 7.5, Theorem 7.6]{Br2}.

\subsection{Broken circuit complexes}\label{se2.3}

Let $M$ be a matroid with a given linear ordering $<$ of its ground set $E$. When no confusion may arise, we will briefly denote the broken circuit complex of $M$ with respect to $<$ by $BC(M)$. Note that if $M$ contains a loop, then $\emptyset$ is a broken circuit, and so $BC(M)=\emptyset$. It is, therefore, enough to consider broken circuit complexes of loopless matroids. Moreover, if necessary, one may even restrict attention to simple matroids because the broken circuit complex of a loopless matroid is isomorphic to that of its simplification; see \cite[Proposition 7.4.1]{B}.

Now suppose $M$ is loopless and $r(M)=r$. Then $BC(M)$ is an $(r-1)$-dimensional shellable simplicial complex; see \cite{Pr} or \cite[7.4]{B}. Let $(f_0,\ldots,f_r)$ be the $f$-vector of $BC(M)$. Then the polynomial $f(M;x)=\sum_{i=0}^rf_ix^{r-i}$ is called the \emph{$f$-polynomial} of $BC(M)$. The $h$-vector $(h_0,\ldots,h_r)$ and \emph{$h$-polynomial} $h(M;x)=\sum_{i=0}^rh_ix^{r-i}$ of $BC(M)$ are determined by the polynomial identity $h(M;x+1)=f(M;x)$. In other words, the $f$-vector and $h$-vector are correlated as follows
\[f_i=\sum_{j=0}^i\binom{r-j}{i-j}h_j \quad\text{and}\quad
 h_i=\sum_{j=0}^i(-1)^{i-j}\binom{r-j}{i-j}f_j,\quad i=0,\ldots,r.\]

Observe that different orderings of the ground set $E$ of $M$ may lead to non-isomorphic broken circuit complexes; see, e.g. \cite[Example 7.4.4]{B}. However, from the formula $h(M;x)=t(M;x,0)$, where $t(M;x,y)$ is the \emph{Tutte polynomial} of $M$ defined by
\[t(M;x,y)=\sum_{X\subseteq E}(x-1)^{r(E)-r(X)}(y-1)^{|X|-r(X)}\]
(see \cite[p. 240]{B}), it follows that the $f$-vector and $h$-vector of $BC(M,<)$ are independent of the ordering $<$.

We keep the notation from the introduction. In the next lemma some properties of the $h$-vector of the broken circuit complex are summarized. Recall that a matroid is \emph{representable} if it is isomorphic to the vector matroid of a matrix over some field.

\begin{lemma}\label{lm25}
 Let $M$ be a loopless matroid on the ground set $E$ with $r(M)=r$. Let $(h_0,h_1,\ldots,h_r)$ and $h(M;x)=\sum_{i=0}^rh_ix^{r-i}$ be the $h$-vector and the $h$-polynomial of $BC(M)$, respectively. Then the following statements hold.
\begin{enumerate}
 \item
$h_i\geq 0$ for $i=0,\ldots,r$. Moreover, $h_0=1,\ h_{r-1}=\beta(M)$, and $h_r=0$. If $M$ is simple, then $h_1=|E|-r$.

 \item
Assume that $M$ is either the direct sum or the parallel connection of two matroids $M_1$ and $M_2$. Then
\[h(M;x)=
\begin{cases}
 h(M_1;x)h(M_2;x)& \text{ if } M=M_1\oplus M_2,\\
 x^{-1}h(M_1;x)h(M_2;x)& \text{ if } M=P(M_1,M_2).
\end{cases}
\]

 \item
$M$ has $k$ connected components if and only if $k$ is the smallest number such that $h_{r-k}>0$. In particular, $\beta(M)>0$ if and only if $M$ is connected.

 \item
If $e\in E$, then
\[h(M;x)=
\begin{cases}
 xh(M-e;x) \quad \text{ if } e \text{ is a coloop of } M,\\
h(M-e;x)+h(M/e;x) \quad \text{ otherwise}.
\end{cases}
\]
Thus, if $M$ is connected and $|E|\geq 2$, then either $M-e$ or $M/e$ is connected.

\item
If $M=C_{r+1}$, an $(r+1)$-circuit, then $h(M;x)=x^r+x^{r-1}+\cdots+x.$

 \item
Assume that $M$ is representable. Let $s$ be the largest index such that $h_s\ne 0$. Then $\sum_{j=0}^ih_j\leq\sum_{j=0}^ih_{s-j}$ for all $i=0,\ldots,s$.

\item
$M\in \mathcal{S}$ if and only if $M$ is a direct sum of series-parallel networks. Suppose $M\in \mathcal{S}$. Then $\delta_1(M)\geq 0$. Moreover, if $M$ is either the direct sum or the parallel connection of two matroids $M_1$ and $M_2$, then $\delta_1(M)=\delta_1(M_1)+\delta_1(M_2).$
\end{enumerate}
\end{lemma}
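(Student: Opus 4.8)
The plan is to establish the three assertions in order, using the multiplicativity of the $h$-polynomial under direct sums and parallel connections from part (ii), together with the characterization of series-parallel networks via the beta invariant from Lemma \ref{lm24}. First I would record the basic reformulation: writing $h(M;x)=\sum_{i=0}^r h_ix^{r-i}$ and $s=s(M)$ for the largest index with $h_s\ne 0$, the quantity $\delta_1(M)$ equals $h_{s-1}-h_1$ when $s\geq 2$ and equals $0$ when $s\leq 1$. By part (iii), $s$ equals $r$ minus the number of connected components, so if $M=M_1\oplus\dots\oplus M_k$ then $\deg h(M;x)$ drops by exactly $1$ for each component, and $h(M;x)=\prod_j h(M_j;x)$; consequently each $M_j$ must itself be a series-parallel network precisely when $\delta_0(M)=0$ forces $\beta(M_j)$-type conditions. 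More directly, $M\in\mathcal S$ means $\delta_0(M)=h_s-h_0=h_s-1=0$, i.e.\ the leading (nonzero) coefficient of $h(M;x)$ is $1$; I would then argue that for a loopless $M$ with connected components $M_1,\dots,M_k$, the leading coefficient of $h(M;x)$ is the product of the leading coefficients of the $h(M_j;x)$, each of which is a positive integer (by part (i), $h_0=1$ for each, and the leading coefficient of a connected loopless matroid's $h$-polynomial is $\beta$ when that matroid is a series-parallel network, equals $1$ iff $\beta(M_j)=1$ iff $M_j$ is a series-parallel network by Lemma \ref{lm24}(ii)(e)). Hence $\delta_0(M)=0$ iff every $M_j$ is a series-parallel network. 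This proves the first sentence.

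For the nonnegativity $\delta_1(M)\geq 0$: I would first note that series-parallel networks are graphic, hence representable, and a direct sum of representable matroids is representable; so part (vi) applies to any $M\in\mathcal S$. Taking $i=1$ in the inequality $\sum_{j=0}^i h_j\leq\sum_{j=0}^i h_{s-j}$ gives $h_0+h_1\leq h_s+h_{s-1}$, and since $M\in\mathcal S$ means $h_s=h_0=1$, this is exactly $h_1\leq h_{s-1}$, i.e.\ $\delta_1(M)\geq 0$ (the cases $s\leq 1$ being trivial since then $h_1=0$ too, as a loopless matroid of rank $\le 1$ is a coloop or a parallel class giving $h(M;x)=x$ or degenerate).

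For the additivity $\delta_1(M)=\delta_1(M_1)+\delta_1(M_2)$ under both $\oplus$ and $P$: here I would compute directly from part (ii). In the direct sum case $h(M;x)=h(M_1;x)h(M_2;x)$, and if $s_i=\deg h(M_i;x)$ with leading coefficient $1$ (which holds since $M_i\in\mathcal S$, already shown), then the top two coefficients of a product of two monic polynomials $x^{s_1}+a x^{s_1-1}+\cdots$ and $x^{s_2}+b x^{s_2-1}+\cdots$ are $1$ and $a+b$; writing $a=h_{s_1-1}(M_1)$, $b=h_{s_2-1}(M_2)$ and comparing with the low-degree coefficients $h_1(M)=h_1(M_1)+h_1(M_2)$ (the coefficient of $x^{s_1+s_2-1}\cdot x^{?}$... more precisely, since $h(M_i;x)$ has no constant term when $r(M_i)\geq 1$, actually $x\mid h(M_i;x)$, so the lowest-degree term of the product is degree $\geq 2$ and the coefficient of $x$ in $h(M;x)$ — careful: I should instead match $h_1$ as the coefficient of $x^{r-1}$ where $r=r(M)=s+k$). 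The cleanest route is: $h_{s-1}(M)-h_1(M)$ is the coefficient of $x$ in the reciprocal-type comparison; I would instead directly expand. For $M=M_1\oplus M_2$: $h_{s-1}(M)=h_{s_1-1}(M_1)+h_{s_2-1}(M_2)$ from monic multiplication, and $h_1(M)=h_1(M_1)+h_1(M_2)$ because the coefficient of $x^{r-1}$ in $h(M_1;x)h(M_2;x)$ (with $r=r_1+r_2$, $r_i=s_i$ here when $M_i$ connected, or summing over components) is obtained by taking the $x^{r_1-1}$ term of one factor times the $x^{r_2}$ term of the other — but $h(M_i;x)$ has top degree $s_i$ which may be less than $r_i$; nonetheless the leading behaviour is what matters and a short bookkeeping argument handles it. For $M=P(M_1,M_2)$: $h(M;x)=x^{-1}h(M_1;x)h(M_2;x)$ is again monic of degree $s_1+s_2-1$ with the same top-coefficient pattern, and the shift by $x^{-1}$ affects $h_{s-1}$ and $h_1$ identically, so additivity is preserved.

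\textbf{Main obstacle.} The delicate point is the low-degree ($h_1$) side of the additivity computation: one must be careful that $\deg h(M_i;x)=s_i$ need not equal $r(M_i)$, so ``coefficient of $x^{r-1}$'' is not simply ``second coefficient from the top.'' I expect the cleanest fix is to work entirely with the polynomials $h(M_i;x)$ in the variable $x$, note each is of the form $x\cdot g_i(x)$ with $g_i(0)=h_{?}$... actually $h_1$ sits as the coefficient of $x$ only when $s=r$; in general $h_1(M)$ is the coefficient of $x^{r-1}$ in $h(M;x)$, and since the three operations ($\oplus$, $P$, passing to components) each behave predictably on both the top and the rank, I would phrase the whole argument in terms of the \emph{reciprocal} polynomial or simply track $(r,s,h_1,h_{s-1})$ as a 4-tuple through each operation. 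This is routine once set up correctly, but it is the step most prone to off-by-one errors, so it deserves the most care.
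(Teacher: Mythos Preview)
Your approach to all three parts is essentially the paper's: factor $h(M;x)$ over the connected components and use that the lowest nonzero coefficient is $\prod_j\beta(N_j)$ for the first claim; invoke representability of graphic matroids and part (vi) with $i=1$ for $\delta_1\geq 0$; and compute directly from (ii) for additivity.

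Your ``main obstacle,'' however, is a self-inflicted confusion. You write ``$s_i=\deg h(M_i;x)$,'' but in fact $\deg h(M_i;x)=r(M_i)$ always, since $h_0=1$ is the coefficient of $x^{r_i}$ in $h(M_i;x)=\sum_j h_j^{(i)}x^{r_i-j}$. So $h_1$ is the \emph{second-highest} coefficient of the polynomial, while $h_{s-1}$ is the second-\emph{lowest} nonzero one---you have them swapped. Once this is straight the computation is immediate. Multiplying the monic polynomials $x^{r_1}+h_1^{(1)}x^{r_1-1}+\cdots$ and $x^{r_2}+h_1^{(2)}x^{r_2-1}+\cdots$ gives second-highest coefficient $h_1^{(1)}+h_1^{(2)}$, so $h_1(M)=h_1(M_1)+h_1(M_2)$ with no bookkeeping about $s_i$ versus $r_i$ at all. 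At the low end, the lowest nonzero terms are $h_{s_i}^{(i)}x^{r_i-s_i}$; the product has lowest nonzero coefficient $h_s=h_{s_1}^{(1)}h_{s_2}^{(2)}=1$ and next-lowest coefficient $h_{s_1-1}^{(1)}h_{s_2}^{(2)}+h_{s_1}^{(1)}h_{s_2-1}^{(2)}=h_{s_1-1}^{(1)}+h_{s_2-1}^{(2)}$. The factor $x^{-1}$ in the parallel-connection case shifts all exponents uniformly and changes nothing in either identity. Thus what you flagged as the delicate side ($h_1$) is actually the trivial one, and the paper disposes of the whole additivity statement in two lines.
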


\begin{proof}
 The properties (i)--(iv) follow from the formula $h(M;x)=t(M;x,0)$ and the corresponding properties of the Tutte polynomial which are presented in \cite[6.2]{BrOx2} and \cite[p. 182]{Br5}. From (iv) one easily gets (v). For the proof of (vi), see \cite[Proposition 2.3(v)]{Le}. It remains to prove (vii). Let $s$ be the largest index such that $h_s\ne 0$. Then $M\in \mathcal{S}$ if and only if $h_s=1$. Note that from (ii) we obtain $h(M;x)=\prod_{i=1}^k h(N_i;x)$, where $N_1,\ldots,N_k$ are the connected components of $M$. This yields $h_s=\prod_{i=1}^k \beta(N_i)$. Thus, $h_s=1$ if and only if $ \beta(N_i)=1$ for $i=1,\ldots,k$. By Lemma \ref{lm24}, the latter condition means that all the $N_i$ are series-parallel networks. Now assume that $M\in \mathcal{S}$. Then $M$ is a graphic matroid because, as we have just shown, each $N_i$ is graphic. In particular, $M$ is representable; see \cite[Proposition 5.1.2]{O}. Hence, if $s\geq 2$, we obtain from (vi) that
 \[\delta_1(M)=h_{s-1}-h_1=(h_s+h_{s-1})-(h_0+h_1)\geq 0.\]
As $\delta_1(M)=0$ when $s\leq 1$ by definition, the second assertion follows.
To prove the last one, let $(h^{(i)}_0,\ldots,h^{(i)}_{s_i})$ with $h^{(i)}_{s_i}\ne 0$ be the $h$-vector of $BC(M_i)$ for $i=1,2$. If $M=M_1\oplus M_2$ or $M=P(M_1,M_2)$, then from the relation between $h$-polynomials given in (ii) we get
\[h_1=h^{(1)}_1+h^{(2)}_1,\quad h_{s-1}=h^{(1)}_{s_1-1}h^{(2)}_{s_2}+h^{(1)}_{s_1}h^{(2)}_{s_2-1}, \quad
h_s=h^{(1)}_{s_1}h^{(2)}_{s_2}.\]
Since $h_s=1$, $h^{(1)}_{s_1}=h^{(2)}_{s_2}=1$. Therefore,
\[\delta_1(M)=h_{s-1}-h_1=(h^{(1)}_{s_1-1}-h^{(1)}_1)+(h^{(2)}_{s_2-1}-h^{(2)}_1)=\delta_1(M_1)+\delta_1(M_2).
\vspace{-1.3\baselineskip}\]
\end{proof}

  Let us recall the following characterization of the class $\mathcal{S}_0$, which is a motivation for this paper. In the context of hyperplane arrangements, this characterization was proved in \cite[Theorem 1.2]{Le} only for simple matroids. However, it holds true a little bit more generally:

\begin{lemma}\label{lm26}
 Let $M$ be a loopless matroid. Suppose that $(h_0(M),h_1(M),\ldots,h_s(M))$ is the $h$-vector of $BC(M)$ with $s$ equal to the largest index $i$ such that $h_i(M)\ne0$. Then the following conditions are equivalent:
\begin{enumerate}
 \item
 $M\in \mathcal{S}_0$;

\item
$(h_0(M),h_1(M),\ldots,h_s(M))$ is symmetric, i.e. $h_i(M)=h_{s-i}(M)$ for $i=0,\ldots,s$;

\item
there exists an ordering $<$ of the ground set of $M$ such that the broken circuit complex $BC(M,<)$ is a complete intersection;

 \item
each connected component of $M$ is either a coloop or an iterated parallel connection of non-loop circuits.
\end{enumerate}
\end{lemma}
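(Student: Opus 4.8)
The plan is to single out the implications that can be checked directly for an arbitrary loopless matroid and to obtain the remaining ones from the case of simple matroids, where the equivalence of (i)--(iv) is \cite[Theorem~1.2]{Le}. Concretely, I would first establish (iv)$\Rightarrow$(ii), (ii)$\Rightarrow$(i) and (iii)$\Rightarrow$(ii) unconditionally, and then (i)$\Rightarrow$(iv) and (i)$\Rightarrow$(iii) by passing to the simplification $\overline M$; together these close the circle (i)$\Rightarrow$(iv)$\Rightarrow$(ii)$\Rightarrow$(i) and also yield (i)$\Rightarrow$(iii)$\Rightarrow$(ii), hence the asserted equivalence.

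For (iv)$\Rightarrow$(ii), write $M=N_1\oplus\cdots\oplus N_c$ with each $N_j$ a coloop or an iterated parallel connection of non-loop circuits, and set $r=r(M)$. By Lemma~\ref{lm25}(v) a coloop has $h$-polynomial $x$ and an $(m+1)$-circuit has $h$-polynomial $x(1+x+\cdots+x^{m-1})$, and by Lemma~\ref{lm25}(ii) a parallel connection multiplies $h$-polynomials and introduces a factor $x^{-1}$; hence $h(N_j;x)=x\,Q_j(x)$, where $Q_j$ is a product of the palindromic polynomials $1+x+\cdots+x^{m-1}$ (with $Q_j=1$ when $N_j$ is a coloop), and by Lemma~\ref{lm25}(ii) once more
\[
  h(M;x)=\prod_{j=1}^{c}h(N_j;x)=x^{c}Q(x),\qquad Q:=\prod_{j=1}^{c}Q_j .
\]
As $Q$ is palindromic of degree $r-c$ with $Q(0)\ne0$, reading the coefficients off $h(M;x)=x^{c}Q(x)$ shows that the $h$-vector $(h_0,\ldots,h_s)$ is exactly the (symmetric) coefficient vector of $Q$; in particular $s=r-c$ and $h_i=h_{s-i}$, which is (ii). For (ii)$\Rightarrow$(i): symmetry forces $h_s=h_0=1$, so $M\in\mathcal S$ by Lemma~\ref{lm25}(vii), and $h_{s-1}=h_1$ gives $\delta_1(M)=0$, so $M\in\mathcal S_0$. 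For (iii)$\Rightarrow$(ii): a complete intersection Stanley--Reisner ring is Gorenstein, and a Gorenstein standard graded algebra has palindromic $h$-vector, i.e. $h_i=h_{s-i}$.

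For the two remaining implications I would use that (i) and (ii) depend only on the $h$-vector of $BC(M)$ --- for (i) because $\mathcal S_0$ is defined through the $h$-vector --- while (iii) depends only on the isomorphism type of the Stanley--Reisner ring of $BC(M)$; since by \cite[Proposition~7.4.1]{B} the complex $BC(M)$ is isomorphic to $BC(\overline M)$ (for appropriately matched orderings), all of (i), (ii), (iii) hold for $M$ iff they hold for $\overline M$. If $M\in\mathcal S_0$, then $M$, hence $\overline M$, is a direct sum of series-parallel networks by Lemma~\ref{lm25}(vii), in particular graphic and therefore representable, so \cite[Theorem~1.2]{Le} applies to the simple matroid $\overline M$ and gives both that $\overline M$ has the form described in (iv) and that $\overline M$ admits an ordering making $BC(\overline M)$ a complete intersection. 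The second of these yields (iii) for $M$; for (iv) one lifts the description from $\overline M$ to $M$, using that $M$ is an iterated parallel extension of $\overline M$, that a parallel extension is a parallel connection with a $2$-circuit (itself a non-loop circuit), that a coloop component admits no parallel extension, and that --- by the associativity and commutativity of iterated parallel connection (Lemma~\ref{lm22} and \cite{Br2}) --- adjoining such a $2$-circuit at an element of one of the circuits in the decomposition of a non-coloop component again produces an iterated parallel connection of non-loop circuits.

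I expect the main obstacle to be this last lifting of (iv): after each adjoined $2$-circuit one has to re-associate the iterated parallel connection so that the new circuit shares its basepoint with a component listed earlier, which needs careful use of the commutativity and associativity of iterated parallel connections together with Lemma~\ref{lm22}(vii). A secondary point is to justify that the complete-intersection property really transfers along $BC(M)\cong BC(\overline M)$; the cleanest way is to note that, for a matching ordering, $BC(M,<)$ arises from $BC(\overline M,<')$ by adjoining the non-minimal members of the parallel classes as vertices lying in no face, which enlarges the Stanley--Reisner ideal only by those (new) variables, and hence leaves the Stanley--Reisner ring unchanged up to isomorphism.
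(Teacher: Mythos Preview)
Your proof is correct and follows essentially the same route as the paper: reduce (i)--(iii) to the simplification via \cite[Proposition~7.4.1]{B} and \cite[Theorem~1.2]{Le}, prove (iv)$\Rightarrow$(ii) directly from Lemma~\ref{lm25}(ii),(v), and lift (iv) from $\overline M$ to $M$ by adjoining $2$-circuits. The only difference is organizational---you add a direct Gorenstein argument for (iii)$\Rightarrow$(ii), whereas the paper simply transfers (iii) to $\overline M$ along with (i) and (ii)---but this does not affect the substance.
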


\begin{proof}
 Denote by $\overline{M}$ the simplification of $M$. Let (i)', (ii)', and (iii)' be the conditions obtained from (i), (ii), and (iii) by replacing $M$ with $\overline{M}$, respectively. Also, let (iv)' be the condition: each connected component of $\overline{M}$ is either a coloop or an iterated parallel connection of simple circuits (i.e. circuits other than 2-circuits). Then the conditions (i)'--(iv)' are equivalent by \cite[Theorem 1.2]{Le}. Since (i), (ii), and (iii) are conditions on broken circuit complexes, \cite[Proposition 7.4.1]{B} ensures that (i)$\Leftrightarrow$(i)',  (ii)$\Leftrightarrow$(ii)', and (iii)$\Leftrightarrow$(iii)'. Hence, we have (i)$\Leftrightarrow$(ii)$\Leftrightarrow$(iii). Now to complete the proof it suffices to show (iv)'$\Rightarrow$(iv) and (iv)$\Rightarrow$(ii). Note that each connected component of $M$ is either a connected component of $\overline{M}$ or an iterated parallel connection of a connected component of $\overline{M}$ with 2-circuits. So (iv)' implies (iv).
Finally, assume (iv). Then the symmetry of the $h$-vector $(h_0(M),h_1(M),\ldots,h_s(M))$ follows easily from Lemma \ref{lm25}(ii), (iv), (v).
\end{proof}

\section{Contracting series classes}

Let $M$ be a matroid in $\mathcal{S}$. Lemma \ref{lm25}(vii) indicates that the computation of $\delta_1(M)$ reduces to the case where $M$ is connected, i.e. $M$ is a series-parallel network (in fact, if necessary, one may even assume that $M$ is parallel irreducible). As preparation for the next section where such a computation is carried out, we discuss in this section the variation of the number $\delta_1(M)$ when contracting $M$ by a subset of a series class.

Let $M$ be a series-parallel network of rank $r\geq2$ on the ground set $E$ and let $e\in E$. Note that $r(M-e)=r(M)=r$ and $r(M/e)=r-1$, so we may write the $h$-polynomials of broken circuit complexes of these matroids as follows
\[\begin{aligned}
h(M;x)&=h_0x^r+h_1x^{r-1}+\cdots+h_{r-2}x^2+h_{r-1}x,\\
h(M-e;x)&=h_0'x^r+h_1'x^{r-1}+\cdots+h_{r-2}'x^2+h'_{r-1}x,\\
h(M/e;x)&=h_0''x^{r-1}+h_1''x^{r-2}+\cdots+h_{r-3}''x^2+h''_{r-2}x.
\end{aligned}\]
The formula $h(M;x)=h(M-e;x)+h(M/e;x)$ in Lemma \ref{lm25}(iv) now gives $h_i=h_i'+h_{i-1}''$ for $i=1,\ldots,r-1$.
\begin{proposition}\label{pr32}
We keep the notation as above. Assume that $M-e$ is disconnected. Then the following statements hold.
\begin{enumerate}
\item
$M/e$ is a series-parallel network.

\item
$h_1''\leq h_1$, with equality if and only if $e$ is contained in no 3-circuit of $M$.

\item
If $r\geq 3$, then $h_{r-2}-1\leq h''_{r-3}\leq h_{r-2}$. The first inequality becomes an equality when $M-e$ has 2 connected components, and the second inequality becomes an equality when $M-e$ has at least 3 connected components.
\end{enumerate}
\end{proposition}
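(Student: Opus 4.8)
The plan is to derive all three statements from the deletion--contraction identity $h_i=h_i'+h_{i-1}''$ ($i=1,\dots,r-1$) recorded just above, combined with two structural inputs. Input one: since $M-e$ is separable, Lemma~\ref{lm25}(iv) forces $M/e$ to be connected, and a connected matroid of positive rank (here $r(M/e)=r-1\geq 1$) has no loops; so $M/e$ is a loopless connected matroid. Input two: by Lemma~\ref{lm24}(ii) every connected minor of the series--parallel network $M$ is itself a series--parallel network, hence has beta invariant $1$. Part~(i) is then immediate, since $M/e$ is a connected minor of $M$.

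For part~(ii) I would first isolate the combinatorial meaning of $h_1$: for a loopless matroid $N$, the broken circuit complex of $N$ has the same $h$-vector as that of its simplification \cite[Proposition~7.4.1]{B}, so by Lemma~\ref{lm25}(i) one has $h_1(N)=p(N)-r(N)$, where $p(N)$ is the number of rank-one flats (parallel classes) of $N$. Next I would identify the rank-one flats of $M/e$: because $M/e$ is loopless, the parallel class of $e$ in $M$ is trivial, and for $a\in E-e$ the set $\mathrm{cl}_M(\{e,a\})$ is a rank-two flat of $M$ containing $e$; two elements of $E-e$ are parallel in $M/e$ precisely when they determine the same such flat. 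Hence the rank-one flats of $M/e$ are in bijection with the rank-two flats of $M$ through $e$, so $h_1''=q-(r-1)$ where $q$ is the number of the latter, and therefore $h_1-h_1''=(p(M)-1)-q$. Finally, two distinct rank-two flats of $M$ through $e$ intersect only in the (trivial) rank-one flat $\{e\}$, while every other rank-one flat of $M$ lies inside exactly one of them; so removing $e$ from each yields a partition of the $p(M)-1$ remaining rank-one flats into $q$ nonempty blocks, giving $q\leq p(M)-1$, i.e. $h_1''\leq h_1$. Equality forces each block to be a single rank-one flat, i.e. each rank-two flat of $M$ through $e$ to consist of exactly two rank-one flats; and this is exactly the condition that $e$ belong to no $3$-circuit of $M$ (a $3$-circuit through $e$ is a rank-two subset consisting of $e$ together with two further pairwise non-parallel elements, and conversely any rank-two flat through $e$ containing three or more rank-one flats produces such a circuit).

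For part~(iii), with $r\geq 3$ the case $i=r-2$ of the deletion--contraction identity reads $h_{r-2}=h_{r-2}'+h_{r-3}''$, so it suffices to prove $0\leq h_{r-2}'\leq 1$ and to locate the two extreme cases. Write $M-e=N_1\oplus\cdots\oplus N_k$ with the $N_j$ connected; since $M-e$ is separable we have $k\geq 2$, and $M-e$ is loopless, so each $N_j$ is a loopless connected matroid, hence a series--parallel network with $\beta(N_j)=1$. By Lemma~\ref{lm25}(ii), $h(M-e;x)=\prod_{j=1}^{k}h(N_j;x)$, and by Lemma~\ref{lm25}(i) each factor $h(N_j;x)$ has lowest-degree term $\beta(N_j)x$, nonzero because $N_j$ is connected. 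Thus $h(M-e;x)$ has $x$-adic valuation exactly $k$, with coefficient of $x^k$ equal to $\prod_j\beta(N_j)=1$. Consequently $h_{r-2}'$, the coefficient of $x^2$ in $h(M-e;x)$, equals $1$ if $k=2$ and $0$ if $k\geq 3$; in either case $0\leq h_{r-2}'\leq 1$, and $h_{r-3}''=h_{r-2}-h_{r-2}'$ then yields the stated inequalities together with the two equality assertions.

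The routine parts are the manipulations of the deletion--contraction recursion; the step that really needs care is the combinatorial identification underlying part~(ii) --- the bijection between rank-one flats of $M/e$ and rank-two flats of $M$ through $e$, the fact that the latter partition the remaining rank-one flats of $M$, and the translation of ``every such flat is a $2$-point line'' into ``$e$ lies in no $3$-circuit''. Once that is in place, parts~(i) and~(iii) are short, resting only on the facts that connected minors of a series--parallel network are series--parallel networks (so have $\beta=1$) and that for a connected loopless matroid $N$ the polynomial $h(N;x)$ has lowest-degree term $\beta(N)x$.
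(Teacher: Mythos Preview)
Your argument is correct and, for parts~(i) and~(iii), coincides with the paper's proof almost verbatim: both use Lemma~\ref{lm25}(iv) to get that $M/e$ is connected, invoke Lemma~\ref{lm24}(ii) for~(i), and for~(iii) read off $h_{r-2}'$ from the factorization $h(M-e;x)=\prod_j h(N_j;x)$ together with $\beta(N_j)=1$.

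For part~(ii) your route is a mild reformulation of the paper's. The paper compares $|E(\overline{M/e})|$ with $|E(\overline{M}/e)|$ directly; you instead count rank-one flats of $M/e$ via the bijection with rank-two flats of $M$ through $e$, and then observe that these flats (minus $e$) partition the remaining rank-one flats of $M$. These are the same computation in different language: $|E(\overline{M/e})|$ is precisely your $q$, and $|E(\overline{M}/e)|=p(M)-1$. Your phrasing has the small advantage of making the equality case transparent as ``every line through $e$ is a two-point line,'' which is visibly the same as ``$e$ lies in no $3$-circuit''; the paper's phrasing is slightly quicker but leaves that translation implicit. Either way the content is identical.
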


\begin{proof}
(i) Note that $M/e$ is a connected minor of $M$ by Lemma \ref{lm25}(iv). So according to Lemma \ref{lm24}(ii), $M/e$ is a series-parallel network.

(ii) Let $\overline{M}$ be the simplification of $M$. Recall that the broken circuit complexes of $M$ and $\overline{M}$ share the same $h$-vector; see \cite[Proposition 7.4.1]{B}. We use the notation $E(.)$ to denote the ground set of the matroid within the brackets. Since $M-e$ is disconnected, $e$ is contained in no 2-circuit of $M$, whence $e\in E(\overline{M})$. It is clear that $\overline{M/e}=\overline{\overline{M}/e}$. Accordingly, $E(\overline{M/e})\subseteq E(\overline{M}/e)$ with equality if and only if $e$ is contained in no 3-circuit of $\overline{M}$, i.e. if and only if $e$ is contained in no 3-circuit of $M$. Note that $r(\overline{M/e})=r(\overline{\overline{M}/e})=r(\overline{M}/e)$. So by Lemma \ref{lm25}(i), we obtain
\[\begin{aligned}
  h''_1&=|E(\overline{M/e})|-r(\overline{M/e})\leq |E(\overline{M}/e)|-r(\overline{M}/e)\\
&= (|E(\overline{M})|-1)-(r(\overline{M})-1)=|E(\overline{M})|-r(\overline{M})=h_1.
\end{aligned}\]
The equality holds if and only if $e$ is contained in no 3-circuit of $M$.

(iii) We have $h_{r-2}=h'_{r-2}+h''_{r-3}\geq h''_{r-3}$. The equality holds if and only if $h'_{r-2}=0$, which by Lemma \ref{lm25}(iii) means that $M-e$ has at least 3 connected components.

It remains to prove that $h_{r-2}'\leq 1$, with equality if and only if  $M-e$ has exactly $2$ connected components. We have seen that $h'_{r-2}=0$ when $M-e$ has at least 3 connected components. Now suppose $M-e$ has exactly $2$ connected components: $M-e=N_1\oplus N_2$. We need to show that $h_{r-2}'=1$. Indeed, it follows from Lemma \ref{lm24}(ii) that $N_1$ and $N_2$, which are connected minors of $M$, are series-parallel networks. So by Lemmas \ref{lm25}(ii) and \ref{lm24}(ii), $h_{r-2}'=\beta(N_1)\beta(N_2)=1$.
\end{proof}

\begin{corollary}\label{lm33}
 Let $M$ be a series-parallel network of rank $r\geq2$ on $E$. Let $e\in E$ be such that $M-e$ is disconnected. Then $M/e$ is a series-parallel network and $\delta_1(M/e)\geq \delta_1(M)-1$. Assume further that $e$ is contained in no 3-circuit of $M$. Then
\begin{enumerate}
 \item
$\delta_1(M/e)= \delta_1(M)-1$ if and only if $M-e$ has 2 connected components;

\item
$\delta_1(M/e)\leq \delta_1(M)$ with equality if and only if $M-e$ has at least 3 connected components.
\end{enumerate}
\end{corollary}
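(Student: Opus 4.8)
The plan is to deduce everything from Proposition \ref{pr32} together with Lemma \ref{lm25}, by translating statements about the $h$-polynomials into statements about $\delta_1$. Recall that for a series-parallel network $N$ of rank $\rho$ with $h$-vector $(h_0,\ldots,h_{\rho-1})$ (so $h_{\rho-1}=\beta(N)=1$ and $h_\rho=0$ by Lemma \ref{lm25}(i),(iii)), we have $\delta_1(N)=h_{\rho-2}-h_1$ when $\rho\ge3$, while $\delta_1(N)=0$ when $\rho\le2$. So the whole argument is a matter of comparing the coefficient $h_{r-2}$ of $M$ with the coefficient $h''_{r-3}$ of $M/e$ (this is the ``top end''), and $h_1$ of $M$ with $h''_1$ of $M/e$ (this is the ``bottom end'').

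First I would dispose of the conclusion that $M/e$ is a series-parallel network, which is exactly Proposition \ref{pr32}(i); note this also guarantees that $\delta_1(M/e)$ is defined. Next, for the general inequality $\delta_1(M/e)\ge\delta_1(M)-1$: if $r=2$ then $M/e$ has rank $1$, so $M/e$ is a coloop, $\delta_1(M/e)=0$, and one checks directly that $\delta_1(M)\le1$ (indeed $M$ has rank $2$, hence $h$-vector $(1,h_1)$ with a single interior slot, giving $\delta_1(M)=0$; so the inequality is trivial). If $r\ge3$, combine the two inequalities from Proposition \ref{pr32}: $h''_{r-3}\ge h_{r-2}-1$ (part (iii)) and $h''_1\le h_1$ (part (ii)), which yield
\[
\delta_1(M/e)=h''_{r-3}-h''_1\ge (h_{r-2}-1)-h_1=\delta_1(M)-1.
\]
A small subtlety: when $r=3$ the matroid $M/e$ has rank $2$, so by convention $\delta_1(M/e)=0$; here one must verify $\delta_1(M)\le1$ separately, which follows since for rank $3$ the $h$-vector is $(1,h_1,1)$ (using $h_{r-1}=\beta(M)=1$), so $\delta_1(M)=1-h_1\le1$, with the needed inequality again trivial. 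For $r\ge4$ the display above is the clean argument.

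Now assume additionally that $e$ lies in no $3$-circuit of $M$; then Proposition \ref{pr32}(ii) upgrades to $h''_1=h_1$. For part (i): if $M-e$ has exactly $2$ connected components, Proposition \ref{pr32}(iii) gives $h''_{r-3}=h_{r-2}-1$ (assuming $r\ge3$; if $r=2$ then $M-e$ separable with $2$ components forces $M$ to be a $2$-circuit or similar, and both sides of the claimed equality vanish, which one checks directly), so $\delta_1(M/e)=\delta_1(M)-1$. Conversely, if $M-e$ has at least $3$ connected components, part (iii) gives $h''_{r-3}=h_{r-2}$, hence $\delta_1(M/e)=\delta_1(M)$, not $\delta_1(M)-1$ (note $\delta_1(M)$ and $\delta_1(M)-1$ are distinct); since $M-e$ is separable it has at least $2$ components, so these two cases are exhaustive, giving the ``if and only if''. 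Part (ii) is the mirror image: $\delta_1(M/e)\le\delta_1(M)$ always (from $h''_{r-3}\le h_{r-2}$ in part (iii) and $h''_1=h_1$), with equality iff $h''_{r-3}=h_{r-2}$, which by part (iii) holds exactly when $M-e$ has at least $3$ connected components.

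The main obstacle is bookkeeping at the low-rank boundary ($r=2$ and $r=3$), where $M/e$ drops to rank $\le2$ and the quantity $\delta_1(M/e)$ is governed by the convention $\delta_1=0$ rather than by the formula $h_{\rho-2}-h_1$; one has to check by hand that the asserted (in)equalities still hold there, using that rank-$2$ and rank-$3$ series-parallel networks have $h$-vectors of the very restricted shapes $(1,h_1)$ and $(1,h_1,1)$. For $r\ge4$ everything is a direct substitution into Proposition \ref{pr32}.
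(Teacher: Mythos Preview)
Your approach is the same as the paper's---deduce everything from Proposition~\ref{pr32}---and for $r\ge4$ your argument is complete and matches the paper. The problem is the boundary case $r=3$, which you flag but do not actually handle.

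First a small slip: for a rank-$3$ series-parallel network the $h$-vector is $(1,h_1,1)$ with $s=2$, so $\delta_1(M)=h_{s-1}-h_1=h_1-h_1=0$, not $1-h_1$.

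More seriously, at $r=3$ both $\delta_1(M)$ and $\delta_1(M/e)$ are $0$, so statement~(i) reads ``$0=-1$ iff $M-e$ has exactly $2$ components'' and statement~(ii) reads ``$0=0$ iff $M-e$ has at least $3$ components''. Both hold precisely when $M-e$ has at least $3$ components, and that is a fact you must \emph{prove}; it does not follow from the shape of the $h$-vector alone as your last paragraph suggests. The paper proves it structurally: under the hypothesis that $e$ lies in no $3$-circuit, the simplification $\overline{M}$ cannot be the parallel connection of two triangles, hence must be a $4$-circuit, and then $M-e$ has three components. Alternatively you can stay with $h$-vectors, but you need one more input: since $M/e$ is a rank-$2$ series-parallel network, $h''_1=\beta(M/e)=1$; the no-$3$-circuit hypothesis and Proposition~\ref{pr32}(ii) then force $h_1=h''_1=1$, whence $h''_{r-3}=h''_0=1=h_1=h_{r-2}$ and Proposition~\ref{pr32}(iii) gives at least $3$ components. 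Either way, this step is genuine content, not bookkeeping, and your proposal omits it.

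(Your $r=2$ parenthetical is also off, since there $\delta_1(M)-1=-1\ne0$; but the case is vacuous anyway, because a rank-$2$ series-parallel network has simplification a triangle, so every element lies in a $3$-circuit.)
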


\begin{proof}
The fact that $M/e$ is a series-parallel network was proved in Proposition \ref{pr32}. Let us show that $\delta_1(M/e)\geq \delta_1(M)-1$. If $r\leq 3$, then it follows from the definition that $\delta_1(M/e)= \delta_1(M)=0$. If $r\geq4$, then $\delta_1(M)=h_{r-2}-h_1$ and $\delta_1(M/e)=h_{r-3}''-h_1''$. So the inequality follows easily from Proposition \ref{pr32}.

Now assume that $e$ is contained in no 3-circuit of $M$. Then $r\geq 3$. For the same reason as above, (i) and (ii) follow from Proposition \ref{pr32} when $r\geq 4$. Consider the case $r=3$. Then $\delta_1(M/e)= \delta_1(M)=0$. Thus to complete the proof, we need to show in this case that $M-e$ has 3 connected components. Let $\overline{M}$ be the simplification of $M$. Observe that there are only two non-isomorphic simple matroids of rank 3 which are series-parallel networks, namely, a 4-circuit and a parallel connection of two 3-circuits. Since $e$ is contained in no 3-circuit of $M$, $\overline{M}$ must be a 4-circuit. It then follows without difficulty that $M-e$ has 3 connected components.
\end{proof}

A subset $X$ of the ground set of a connected matroid $M$ is called \emph{removable} if $M-X$ is connected. We are interested in removable series classes because of the following corollary.

\begin{corollary}\label{co33}
 Let $M$ be a series-parallel network on $E$. Let $X$ be a subset of a series class of $M$. Assume that $|X|\geq2$ and $X\cup \{f\}$ is not a circuit of $M$ for every $f\in E$. Denote by $\tilde{M}$ the matroid obtained from $M$ by contracting all but one element of $X$. Then
\[\delta_1(M)=
\begin{cases}
 \delta_1(\tilde{M}) +1& \text{ if } X \text{ is removable},\\
\delta_1(\tilde{M}) & \text{otherwise}.
\end{cases}
\]
\end{corollary}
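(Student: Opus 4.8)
The plan is to reduce the general statement to an iteration of Corollary~\ref{lm33}, contracting the elements of $X$ one at a time. Write $X=\{e_1,\ldots,e_t\}$ with $t=|X|\ge 2$, and set $M_0=M$, $M_j=M_{j-1}/e_j$ for $j=1,\ldots,t-1$, so that $\tilde M=M_{t-1}$. The first thing I would verify is that at each stage the relevant hypotheses of Corollary~\ref{lm33} are met: namely that $M_{j-1}$ is a series-parallel network of rank $\ge 2$, that $M_{j-1}-e_j$ is separable, and that $e_j$ lies in no $3$-circuit of $M_{j-1}$. Since $X$ is contained in a series class $Y$ of $M$, and contraction of elements of a series class leaves the remaining elements of that series class in a series class of the smaller matroid (one can see this from the circuit description: for a series class $Y$, every circuit either contains $Y$ or misses it, and contracting $e_j$ merges $Y-\{e_j\}$ into a series class of $M_{j-1}$), the set $\{e_{j+1},\ldots,e_{t-1}\}$ together with the one surviving element $e_t$ remains a subset of a series class of $M_j$ throughout. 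In particular each $e_j$, together with any $f$, is still not a circuit by the standing hypothesis on $X$, so $e_j$ lies in no $2$-circuit, hence in no $3$-circuit either would need a separate argument.

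The key combinatorial point is the equivalence, for each $j$, between ``$M_{j-1}-e_j$ is separable'' (equivalently, has at least two connected components) and the structure of the series class. Here I would use that since $|X|\ge 2$ and $X$ is a subset of a series class, each $e_j$ for $j<t$ is in series with the surviving element $e_t$ inside $M_{j-1}$; for an element in a non-trivial series class of a connected matroid, deleting it disconnects the matroid into exactly two components (this is essentially the dual of the statement that deleting one element of a non-trivial parallel class of a connected matroid leaves it connected — cf. the series/parallel class discussion in Section~2 and Lemma~\ref{lm22}, together with Lemma~\ref{lm24}(ii)(c)). Thus $M_{j-1}-e_j$ has \emph{exactly} two connected components for every $j=1,\ldots,t-1$, except possibly for the very last contraction, which is the one affected by whether $X$ itself is removable. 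Concretely: deleting $e_j$ from $M_{j-1}$ for $j<t-1$ leaves the surviving series class element together with $e_{j+1},\ldots,e_{t-1}$, forcing two components; but whether the final object $M_{t-1}-(\text{surviving element})$, i.e. essentially $M-X$ up to the bookkeeping, is connected is exactly the condition ``$X$ removable.''

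So the computation runs as follows. For $j=1,\ldots,t-2$ (the ``intermediate'' contractions), $M_{j-1}-e_j$ has exactly two connected components, and $e_j$ lies in no $3$-circuit, so Corollary~\ref{lm33}(i) gives $\delta_1(M_j)=\delta_1(M_{j-1})-1$. Wait — that would decrease $\delta_1$ at every step, which contradicts the claimed answer; the resolution is that after the first contraction the element $e_{j}$ for $j\ge 2$ may now be in a series class that, upon deletion of $e_j$, does \emph{not} disconnect, because the other series members have already been contracted away. Let me restate: the correct bookkeeping is that among the $t-1$ contractions exactly one ``splits off a genuine new component,'' namely the step that reveals whether $M-X$ is connected; the remaining $t-2$ steps each delete an element whose deletion gives $\ge 3$ components (because several series-class siblings survive), so Corollary~\ref{lm33}(ii) applies and $\delta_1$ is unchanged. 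Tracking which step is which is the heart of the argument. After sorting out the order of contractions so that this holds, one gets $\delta_1(\tilde M)=\delta_1(M)$ in the non-removable case and $\delta_1(\tilde M)=\delta_1(M)-1$ in the removable case, which is exactly the assertion.

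The main obstacle, then, is the careful tracking of connectivity through the sequence of contractions: verifying that $M_{j-1}-e_j$ has the right number of components at each stage (two versus at least three), and that ``$X$ removable'' for $M$ translates correctly into the connectivity of $M-e_j$ at precisely the right step. I would handle this by choosing the order of contraction judiciously — contract $e_1,\ldots,e_{t-1}$ so that the deletions $M_{j-1}-e_j$ for $j\le t-2$ each have $\ge 3$ components (possible as long as $\ge 2$ siblings of $e_j$ remain, which holds since the surviving element plus $e_{j+1},\ldots,e_{t-1}$ are present, giving at least... here one needs $t\ge 3$; the case $t=2$ is a single application of Corollary~\ref{lm33} and must be treated separately), invoking Corollary~\ref{lm33}(ii) at those steps, and then applying Corollary~\ref{lm33}(i)/(ii) at the final step $j=t-1$ according to whether $M_{t-2}-e_{t-1}$ (equivalently, $X$) is removable. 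Summing the increments $0+\cdots+0+\varepsilon$ with $\varepsilon\in\{0,1\}$ gives the result.
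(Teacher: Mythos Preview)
Your overall strategy is exactly the paper's: contract the elements of $X$ one at a time and apply Corollary~\ref{lm33} at each step, with all but the last step falling under case~(ii) and the last step deciding between~(i) and~(ii) according to whether $X$ is removable. Where you go astray is the claim that ``for an element in a non-trivial series class of a connected matroid, deleting it disconnects the matroid into exactly two components.'' That is false, and it is precisely why you then have to backtrack: if $Y$ is the series class and $e\in Y$, then \emph{every} element of $Y-e$ becomes a coloop in $M-e$, so $M-e$ has at least $|Y-e|+1$ components. This is the clean reason the intermediate contractions (those made while at least three elements of $X$ survive) land in Corollary~\ref{lm33}(ii): each such $M_{j-1}-e_j$ has $\ge 3$ components because $e_{j+1},\dots,e_t$ are all coloops there.

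You also leave the ``no $3$-circuit'' check undone. Here is how the paper dispatches it. At an intermediate step, the surviving set $\{e_j,\dots,e_t\}$ still lies in a series class of $M_{j-1}$ and has at least three elements; any circuit of $M_{j-1}$ through $e_j$ must contain all of them, so a $3$-circuit through $e_j$ would force $\{e_j,\dots,e_t\}$ itself to be a circuit, which pulls back to $X$ being a circuit of $M$---excluded by the hypothesis (take $f\in X$ in ``$X\cup\{f\}$ is not a circuit''). At the final step, when only $e_{t-1},e_t$ remain, a $3$-circuit $\{e_{t-1},e_t,f\}$ in $\hat M=M_{t-2}$ would pull back to the circuit $X\cup\{f\}$ in $M$, again excluded. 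Finally, the paper makes the last deletion explicit: $\hat M-e_{t-1}=(M-e_{t-1})/(X-\{e_{t-1},e_t\})=\{e_t\}\oplus(M-X)$, so the component count there is exactly two if and only if $X$ is removable. Once you replace your heuristic connectivity claims with these three observations, your argument and the paper's coincide.
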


\begin{proof}
  Let $\hat{M}$ be $M$ with all but two elements, say $e_1$ and $e_2$, of $X$ contracted. We will show that  $\delta_1(M)=\delta_1(\hat{M})$, and that $\{e_1,e_2\}$ is a subset of a series class of $\hat{M}$. The case $|X|=2$ is trivial, so we may assume $|X|\geq 3$. Let $e\in X-\{e_1,e_2\}$. Since $X$ is a subset of a series class, every circuit of $M$ containing $e$ must contain all of $X$. It follows that $e$ is contained in no 3-circuit of $M$. Moreover, $M-e$ has at least 3 connected components because every element of $X-e$ is a coloop, and hence, a connected component of $M-e$. So by Corollary \ref{lm33}(ii), $\delta_1(M)=\delta_1(M/e)$. Clearly, $X-e$ is a subset of a series class of $M/e$. Therefore, we may repeat the above argument for $M/e$, and so on. Eventually, we obtain $\delta_1(M)=\delta_1(\hat{M})$, and $\{e_1,e_2\}$ is a subset of a series class of $\hat{M}$. From the assumption on $X$ it follows that  $\{e_1,e_2\} \cup\{f\}$ is not a circuit of $\hat{M}$ for every $f\in E(\hat{M})$.
Thus $e_2$ is not contained in any 3-circuit of $\hat{M}$. Observe that
\[\hat{M}-e_2=M/(X-\{e_1,e_2\})-e_2=(M-e_2)/(X-\{e_1,e_2\})=\{e_1\}\oplus(M-X).\]
So if $X$ is removable then $\hat{M}-e_2$ has exactly 2 connected components, and it follows from Corollary \ref{lm33}(i) that $\delta_1(\hat{M})=\delta_1(\hat{M}/e_2)+1=\delta_1(\tilde{M})+1$. Otherwise, $\hat{M}-e_2$ has at least 3 connected components and $\delta_1(\hat{M})=\delta_1(\hat{M}/e_2)=\delta_1(\tilde{M})$ by Corollary \ref{lm33}(ii).
\end{proof}

We conclude this section with a description of removable series classes of graphic matroids. Let $G$ be a graph. A path of $G$ is called a \emph{line} if all of its internal vertices have degree 2. When $G$ is a block, as for matroids, a line $X$ of $G$ is \emph{removable} if $G-X$ is a block. Evidently, every line of a block $G\ne K_2$ is a subset of a series class of $M(G)$. The converse is not true in general; see, e.g. \cite[p. 155]{O}. However, we have

\begin{proposition}\label{pr34}
  Let $G$ be a block. Then removable series classes of $M(G)$ are exactly removable lines of $G$.
\end{proposition}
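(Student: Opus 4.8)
The plan is to prove the two inclusions separately. One direction is essentially formal: if $X$ is a removable line of $G$, then $G-X$ is a block, so $M(G-X)=M(G)-X$ is connected, and since (as noted in the excerpt) every line of a block $G\ne K_2$ is a subset of a series class of $M(G)$, the set $X$ is a removable series class of $M(G)$. (The case $G=K_2$ is trivial since then $M(G)$ is a single coloop and there are no series classes of size $\ge 2$, nor removable lines other than degenerate ones; one should state the blanket assumption $G\ne K_2$ or check it is harmless.) So the content is the reverse inclusion: a removable series class $Y$ of $M(G)$ must be a removable line of $G$.

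First I would recall the structure of series classes of a graphic matroid $M(G)$ when $G$ is a block: a non-trivial series class consists of the edges of a maximal line of $G$, \emph{together with} possibly several edges that are ``in series'' for matroidal but not graphical reasons (this is exactly the phenomenon referenced via \cite[p.~155]{O}, e.g.\ the three edges of a triangle each of whose non-incident structure forces them into one series class in small cases, or more precisely pairs of edges forming a $2$-edge-cut that is not a path in $G$). So given a removable series class $Y$ of $M(G)$, I first want to argue that $Y$ is in fact a line of $G$, i.e.\ the edges of $Y$ form a path all of whose internal vertices have degree $2$ in $G$. The key tool is Lemma~\ref{lm25}(iv): since $M(G)$ is connected and $Y$ is removable, deleting the edges of $Y$ one at a time keeps things connected at the end, and because $Y$ lies in a series class, each edge $e\in Y$ satisfies: every circuit of $M(G)$ through $e$ contains all of $Y$. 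Translating to $G$: every cycle through an edge of $Y$ traverses all the edges of $Y$ consecutively (as an unordered set). I would combine this with connectivity of $G-Y$ to force the edges of $Y$ to share endpoints in a path-like fashion — if two edges of $Y$ were ``far apart'' in $G$, then $G-Y$ would be disconnected, contradicting removability; and if an internal vertex of the putative path had degree $\ge 3$, the extra edge would lie on a cycle that uses only part of $Y$, contradicting the series-class condition. Making this precise is the main obstacle: one must handle the possibility that $Y$ is a matroidal series class that is ``non-graphic'', and show removability rules that out. I expect the cleanest route is: pick $e\in Y$; since $G$ is a block, $e$ lies on a cycle, hence (series condition) all of $Y$ lies on a common cycle $C$, and $C-Y$ is a path or empty; removability of $Y$ forces $C-Y$ nonempty and forces $G-Y$ connected, which pins down the endpoints of the segment $Y$ of $C$ as the only two vertices of $C$ through which the rest of $G$ attaches to that segment — from which the degree-$2$ condition on internal vertices follows, so $Y$ is a line.

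Once $Y$ is known to be a line, removability of $Y$ as a series class of $M(G)$ says $M(G)-Y=M(G-Y)$ is connected, i.e.\ $G-Y$ is a block (here I use that $G-Y$ has at least one edge — true since $|E(G)|>|Y|$ when $G$ is a block not consisting solely of the line $Y$; the degenerate case where $G$ is itself a single line, i.e.\ a cycle, should be checked directly: a cycle is a block, every edge is a removable line unless the cycle has length... one handles this small case by hand). Hence $Y$ is a removable line of $G$, completing the reverse inclusion. I would present the argument with $G$ a block throughout, dispose of $G=K_2$ and $G$ a cycle as trivial or near-trivial base cases, and for the general step lean on Lemma~\ref{lm25}(iv) together with the combinatorial description of how cycles of a block interact with a $2$-cocircuit; the delicate point, and the one I would spend the most care on, is showing a removable \emph{matroidal} series class cannot fail to be a \emph{graphical} line.
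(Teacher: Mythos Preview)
Your easy direction needs one small patch: showing that a removable line $X$ is a subset of a series class with $M(G)-X$ connected does not yet give that $X$ \emph{equals} the series class; you need the observation (stated at the start of the paper's proof and used for both directions) that a nonempty removable subset of a series class of a connected matroid is the whole class, since any leftover element would be a coloop in the deletion.

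The substantive gap is in the hard direction, at the step ``all of $Y$ lies on a common cycle $C$, and $C-Y$ is a path or empty.'' The first clause is fine, but the second does not follow from the series condition alone and is exactly the crux you are trying to establish. A series class need not be contiguous on a cycle containing it: join two disjoint $2$-cycles on $\{1,2\}$ and $\{3,4\}$ by single edges $13$ and $24$; then $\{13,24\}$ is a series class of the resulting block, yet on the $4$-cycle with vertex sequence $1,2,4,3$ these two edges alternate with the others. What rules this out in your situation is removability, which you have not yet invoked at that point in the sketch. Your route can be salvaged (if $C-Y$ had two arcs, a suitable two-element cocircuit inside $Y$ would separate them in $G-Y$, contradicting removability), but as written you assert the delicate point rather than prove it. The paper sidesteps the contiguity issue altogether: it picks a maximal line $X'\subseteq X$, uses maximality to see that the endpoints $u,v$ of $X'$ have degree $\ge 3$ and hence survive as vertices of the block $G-X$, finds a cycle in $G-X$ through $u$ and $v$, and concludes that $G-X'$ is a block; then the observation above forces $X'=X$.
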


\begin{proof}
 We first observe that if $X\ne\emptyset$ is a subset of a series class of a connected matroid $M$ such that $X$ is removable, then $X$ is a removable series class of $M$. From this it follows that removable lines of $G$ are removable series classes of $M(G)$. Conversely, assume $X$ is a removable series class of $M(G)$. Let $X'$ be a maximal line of $G$ contained in $X$. We will show that $X=X'$. By the observation at the beginning of the proof, it suffices to prove that $G-X'$ is a block. Let $u,v$ be the end vertices of $X'$. From the maximality of $X'$ it follows that $u$ and $v$ have degree at least 3. Let $e_1,e_2$ be two edges of $G-X'$ incident to $u$. Then any cycle of $G$ containing $e_1,e_2$ does not contain $X'$. This implies $e_1,e_2\not\in X$. So $u$ is a vertex of $G-X$. Similarly, $v$ is also a vertex of $G-X$. Since $G-X$ is a block, there exists a cycle $C\subseteq G-X$ connecting $u$ and $v$; see, e.g. \cite[Proposition 4.1.4]{O}. Thus the line $X'$ shares only its end vertices with the 
cycle $C$. It follows that $G-X'$ is a block. Hence $X=X'$ is a removable line of $G$.
\end{proof}

\section{Ear decompositions and $\delta_1$}

In this section we will extract some useful information from ear decompositions of series-parallel networks. In particular, a formula for the number $\delta_1$ of a series-parallel network will be derived. Several bounds for $\delta_1$ will then also be discussed.

An \emph{ear decomposition} of a graph $G$ is a partition of the edges of $G$ into a sequence of \emph{ears} $\pi_1,\pi_2,\ldots,\pi_n$ such that
\begin{enumerate}[(ED1)]
 \item
$\pi_1$ is a non-loop cycle and each $\pi_i$ is a simple path (i.e. a path that does not intersect itself) for $i\geq 2$;

\item
each end vertex of $\pi_i$, $i\geq 2$, is contained in some $\pi_j$ with $j<i$;

\item
no internal vertex of $\pi_i$ is in $\pi_j$ for any $j<i$.
\end{enumerate}
Whitney \cite{Wh2} proved that a graph with at least 2 edges admits an ear decomposition if and only if it is a block. He also showed that for a block $G=(V,E)$, the number $n$ of ears in an ear decomposition of $G$ is its \emph{nullity} (or \emph{cyclomatic number}), i.e. $n=|E|-|V|+1$. Thus, in particular, we see from Lemma \ref{lm25}(i) that $n=h_1(M(G))$ if $G$ is a simple graph.

One may characterize series-parallel networks by their ear decompositions. Given an ear decomposition of a graph $G$ as above, we say that $\pi_i$ is \emph{nested} in $\pi_j$, $j<i$, if both end vertices of $\pi_i$ belong to $\pi_j$ and at least one of them is an internal vertex of $\pi_j$. (Note that every vertex of the cycle $\pi_1$ is internal.) When $\pi_i$ is nested in $\pi_j$, the \emph{nest interval} of $\pi_i$ in $\pi_j$ is the path in $\pi_j$ between the two end vertices of $\pi_i$. Here we adopt the convention that the nest interval of an ear in $\pi_1$ is the shorter path, and that if $\pi_1$ is divided into two paths of equal length then at most one of them could be a nest interval. The given ear decomposition is called \emph{nested} if the following conditions hold:
\begin{enumerate}[(N1)]
 \item
for each $i>1$ there exists $j<i$ such that $\pi_i$ is nested in $\pi_j$;

\item
if $\pi_i$ and $\pi_k$ are both nested in $\pi_j$, then either their nest intervals in $\pi_j$ are disjoint, or one nest interval contains the other.
\end{enumerate}
In \cite{Ep}, it is proved that series-parallel networks with at least 2 edges are exactly those blocks for which some ear decomposition is nested, a condition which is equivalent to every ear decomposition being nested.

We have seen that the number of ears in an ear decomposition of a graph is an invariant of the graph and hence independent of the decomposition. Naturally, one may ask whether this is the case for the number of nest intervals in a nested ear decomposition. It turns out that this question is not as simple as it seems. In fact, we can only achieve an affirmative answer to the question through a subtle analysis of the set of nest intervals.

 Let $\Pi=(\pi_1,\pi_2,\ldots,\pi_n)$ be a nested ear decomposition of a series-parallel network $G$. For convenience, we will also view $\Pi$ as the set $\{\pi_1,\pi_2,\ldots,\pi_n\}$. Denote by $N(\Pi)$ the set of all nest intervals appearing in $\Pi$. For each nest interval $I\in N(\Pi)$, we consider the following collection of paths
\[\sigma(I) =\{\pi_i\in \Pi:I \text{ is the nest interval of } \pi_i\}.\]
By conditions (ED3) and (N1), each ear $\pi_i$, $i>1$, is nested in a unique other ear. It follows that the sets $\sigma(I)$, $I\in N(\Pi)$, are well-defined and constitute a partition of $\Pi-\pi_1$. Set $\sigma(I)^+=\sigma(I)\cup\{I\}$ and let $\ell(I)$ be the minimal length of a path in $\sigma(I)^+$ (recall that the \emph{length} of a path is its number of edges). Denote by $p_1(G;\Pi)$ and $p_2(G;\Pi)$ the number of $I\in N(\Pi)$ such that $\ell(I)=1$ and $\ell(I)>1$, respectively. We will see that both $p_1(G;\Pi)$ and $p_2(G;\Pi)$ not only are independent of the decomposition $\Pi$ but also encode interesting combinatorial information about the graph $G$. Thus, in particular, the cardinality of $N(\Pi)$ is an invariant of $G$.

\begin{example}\label{ex42}
 Let $G$ be the graph shown in Figure \ref{fig2}. Let $\pi_1=\{1,2,3,4,5\}$, $\pi_2=\{6\}$, $\pi_3=\{7\}$, $\pi_4=\{8,9,10\}$, $\pi_5=\{11,12\}$. Then $\Pi=(\pi_1,\ldots,\pi_5)$ is a nested ear decomposition of $G$. This decomposition has 3 nest intervals: $I_1=\{3\}$, $I_2=\{4,5\}$, $I_3=\{9,10\}$. Since $\ell(I_1)=\ell(I_2)=1$ and $\ell(I_3)=2$, we have $p_1(G;\Pi)=2$ and $p_2(G;\Pi)=1$.
\end{example}

\begin{figure}[htb]
 \begin{tikzpicture}[scale=1.5]
\draw (0,2)-- (0,0);
\draw (2.18,0)-- (0,0);
\draw (0,2)-- (2.18,2);
\draw (2.18,2)-- (2.18,0);
\draw (2.18,2)-- (1.18,1.08);
\draw (1.18,1.08)-- (2.18,0);
\draw (2.18,2)-- (4.06,1.16);
\draw (4.06,1.16)-- (3.36,0);
\draw (2.18,0)-- (3.36,0);
\draw (2.18,0)-- (4.06,1.16);
\draw (2.18,2) to [out=150,in=30] (0,2);
\draw (1.06,-0.06) node[anchor=north west] {1};
\draw (-0.3,1.22) node[anchor=north west] {2};
\draw (0.9,1.96) node[anchor=north west] {3};
\draw (1.5,1.52) node[anchor=north west] {4};
\draw (1.62,0.9) node[anchor=north west] {5};
\draw (1,2.7) node[anchor=north west] {6};
\draw (2.18,1.1) node[anchor=north west] {7};
\draw (3.06,1.94) node[anchor=north west] {8};
\draw (3.44,1.22) node[anchor=north west] {9};
\draw (2.5,0.76) node[anchor=north west] {10};
\draw (3.76,0.62) node[anchor=north west] {11};
\draw (2.7,-0.06) node[anchor=north west] {12};
\draw (2,-0.32) node[anchor=north west] {$G$};

\begin{scriptsize}
\fill [color=black] (0,0) circle (1.5pt);
\fill [color=black] (2.18,0) circle (1.5pt);
\fill [color=black] (0,2) circle (1.5pt);
\fill [color=black] (2.18,2) circle (1.5pt);
\fill [color=black] (1.18,1.08) circle (1.5pt);
\fill [color=black] (4.06,1.16) circle (1.5pt);
\fill [color=black] (3.36,0) circle (1.5pt);
\fill [color=black] (3.21,0.64) circle (1.5pt);
\end{scriptsize}
 \end{tikzpicture}
\caption{A series-parallel network.}\label{fig2}
\end{figure}
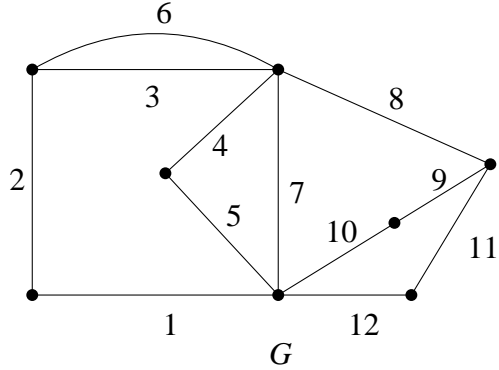

We first show that $p_1(G;\Pi)$ is independent of the decomposition $\Pi$.

\begin{theorem}\label{th42}
 Let $G$ be a series-parallel network with at least 2 edges. Denote by $F(G)$ the set of all edges $e$ of $G$ such that $G/e$ is not a block. Let $\overline{F}(G)=F(G)\cap E(\overline{G})$, where $ E(\overline{G})$ is the edge set of the simplification $\overline{G}$ of $G$. Then for any nested ear decomposition $\Pi$ of $G$ we have $p_1(G;\Pi)=|\overline{F}(G)|$. In particular, if $G$ is simple, then $p_1(G;\Pi)$ is the number of distinct baseedges of the parallel irreducible decomposition of $G$.
\end{theorem}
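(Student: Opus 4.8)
The plan is to rewrite both sides as counts of the same family of vertex pairs. For a nest interval $I$ write $\{u_I,v_I\}$ for its pair of end vertices. First I would check that $I\mapsto\{u_I,v_I\}$ is injective on $N(\Pi)$: if $I\subseteq\pi_j$ and $I'\subseteq\pi_{j'}$ had the same end vertices $\{u,v\}$ with $j<j'$, then any ear $\pi_i\in\sigma(I')$ has end vertices $u,v$, so $\{u,v\}\subseteq\pi_j\cap\pi_{j'}$; by (ED3) the vertices of $\pi_{j'}$ lying on $\pi_j$ are end vertices of $\pi_{j'}$, forcing $\{u,v\}$ to be the end vertices of $\pi_{j'}$ and contradicting that $\pi_i$ is nested in $\pi_{j'}$ (the case $j=j'$ being settled by the convention on nest intervals in $\pi_1$). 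Next I would show $\ell(I)=1$ if and only if $u_I,v_I$ are joined by an edge of $G$: a length-one path in $\sigma(I)^+$ is an edge $u_Iv_I$, and conversely if $e_0=u_Iv_I$ is an edge then a short case check with (ED3) shows that $e_0$ either equals $I$ (possibly a parallel copy lying inside $\pi_1=C_2$) or is a one-edge ear $\pi_m=\{e_0\}$ which, being nested at $\{u_I,v_I\}$, belongs to $\sigma(I)$; either way $e_0\in\sigma(I)^+$. Combining these, $p_1(G;\Pi)$ equals the number of pairs of adjacent vertices $\{u,v\}$ that occur as $\{u_I,v_I\}$ for some $I\in N(\Pi)$.

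The heart of the argument is the equivalence: for a pair $\{u,v\}$ of adjacent vertices of $G$, some nest interval of $\Pi$ has end vertices $u,v$ if and only if the edge $uv$ lies in $F(G)$; by Lemma \ref{lm22}(v),(vi) the latter means $G=P(G_1,G_2)$ with base edge $uv$ and $G_1,G_2$ nontrivial, equivalently $u,v$ carry at least two parallel edges or $G-\{u,v\}$ is disconnected. For the forward direction I would take such a nest interval $I$, so (by the previous paragraph) $\sigma(I)^+$ is a collection of at least two internally disjoint $u$-$v$ paths one of which is the edge $e_0=uv$, and let $G_1$ be the subgraph spanned by $e_0$, the ears of $\sigma(I)^+$ other than $e_0$, and all ears nested recursively along proper subpaths of those, and $G_2$ the subgraph spanned by $e_0$ and the remaining ears; condition (ED3) then gives $V(G_1)\cap V(G_2)=\{u,v\}$, $E(G_1)\cap E(G_2)=\{e_0\}$ and both parts nontrivial, so $G=P(G_1,G_2)$ and $uv\in F(G)$.

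For the reverse direction, write $G=P(G_1,G_2)$ with base edge $e_0=uv$ and both parts nontrivial, and let $\pi_m$ be the first ear containing an edge of $E(G_2)\setminus\{e_0\}$; then every earlier ear lies in $G_1$, so the end vertices of $\pi_m$ lie in $V(G_1)$, while $\pi_m$ has an interior vertex in $V(G_2)\setminus\{u,v\}$. Since the only vertices through which $\pi_m$ can cross between $G_1$ and $G_2$ are $u$ and $v$, and by (ED3) neither of these can be interior to $\pi_m$ once it has appeared in an earlier ear, both end vertices of $\pi_m$ must be $u$ and $v$; hence $\pi_m$ is nested at $\{u,v\}$ and contributes the required nest interval. (If $e_0$ lies on no earlier ear one runs the same argument starting from the ear first containing $e_0$ rather than from $\pi_1$; the case $n=1$, where $N(\Pi)=\emptyset$ and $F(G)=\emptyset$, is trivial.) I expect this equivalence — reconciling the recursive nesting structure of $\Pi$ with the parallel decomposition of $G$ — to be the main obstacle, the reverse implication being the more delicate part because of the bookkeeping about how an ear can enter a parallel component, and the forward implication requiring one to verify that the subgraph spanned by a fan of nested ears meets the rest of $G$ only in its two base vertices.

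Granting the equivalence, $p_1(G;\Pi)$ equals the number of pairs of adjacent vertices $\{u,v\}$ with $uv\in F(G)$. For each such pair the unique edge of $\overline G$ joining $u$ and $v$ lies in $F(G)$: if $u,v$ carry several parallel edges these lie in a common $2$-circuit, hence in $F(G)$, and otherwise the single edge $uv$ is itself the surviving one; conversely every edge of $\overline F(G)$ arises this way. Hence $p_1(G;\Pi)=|F(G)\cap E(\overline G)|=|\overline F(G)|$, which in particular does not depend on $\Pi$. Finally, if $G$ is simple then $\overline F(G)=F(G)=F(M(G))$, and by the proof of Lemma \ref{lm22}(viii) this set is precisely the set of base points of any parallel irreducible decomposition of $M(G)$; translating back to graphs via Lemma \ref{lm22}(v) identifies it with the set of distinct base edges of the parallel irreducible decomposition of $G$, giving the last assertion.
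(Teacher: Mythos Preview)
Your approach is genuinely different from the paper's: the paper argues by induction on the nullity, peeling off a lined nest interval (Lemmas~\ref{lm46}--\ref{lm47}) and tracking how $F(G)$ changes, whereas you attempt a direct bijection between nest intervals with $\ell(I)=1$ and edges of $\overline{F}(G)$. The injectivity of $I\mapsto\{u_I,v_I\}$ and the characterization of $\ell(I)=1$ are correct, and your forward direction of the key equivalence (nest interval at $\{u,v\}$ $\Rightarrow$ $uv\in F(G)$) can be made to work with care. The difficulty is in the reverse direction, and there your argument has a real gap.

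You claim that the first ear $\pi_m$ meeting $E(G_2)\setminus\{e_0\}$ must have end vertices $u,v$, invoking (ED3) to rule out $u,v$ as interior vertices ``once [they have] appeared in an earlier ear.'' But nothing forces $u,v$ to appear before $\pi_m$. Concretely, take $G_1$ to be the triangle $abc$ together with the path $a\!-\!u\!-\!v\!-\!b$ (so $e_0=uv$), and $G_2$ the triangle $uwv$; set $\pi_1=abc$, $\pi_2=a\!-\!u\!-\!w\!-\!v\!-\!b$, $\pi_3=\{e_0\}$. Then $m=2$, the earlier ear $\pi_1$ lies in $G_1$, yet $u,v$ are \emph{interior} to $\pi_2$ and its end vertices are $a,b$, not $u,v$. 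Your parenthetical (``run the same argument starting from the ear first containing $e_0$'') does not rescue this case either, since swapping $G_1,G_2$ gives $m'=1$. The conclusion you want is still true here --- the nest interval of $\pi_3$ in $\pi_2$ has end vertices $u,v$ --- but it comes from the ear carrying $e_0$, not from $\pi_m$. To repair the argument you would need to show more generally that whenever $u,v$ fail to be end vertices of $\pi_m$, the ear $\pi_k\ni e_0$ satisfies $k>m$, is the single edge $\{e_0\}$, and is nested in $\pi_m$ at $\{u,v\}$; this requires a further case analysis (and a separate treatment of $m=1$) that your sketch does not supply.
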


\begin{remark}\label{rm43}
Let $\Pi=(\pi_1,\pi_2,\ldots,\pi_n)$ be a nested ear decomposition of $G$. When dealing with  $p_1(G;\Pi)$ and $p_2(G;\Pi)$, it is possible to assume $|I|=1$ for any nest interval $I\in N(\Pi)$ with $\ell(I)=1$. Indeed, if $|I|>1$ then there exists an ear $\pi_i\in\sigma(I)$ such that $|\pi_i|=1$. So we can ``interchange'' $I$ and $\pi_i$ to get a new ear decomposition with the desired property and without changing $p_1(G;\Pi)$ and $p_2(G;\Pi)$. More precisely, let $\pi_j$ be the ear of $\Pi$ containing $I$ and consider the ear decomposition $\Pi'=(\pi_1',\pi_2',\ldots,\pi_n')$ of $G$ in which
\[\pi_j'=(\pi_j-I)\cup \pi_i,\quad \pi_i'=I,\quad \pi_k'=\pi_k\ \text{ for }\ k\ne i,j. \]
Then $I':=\pi_i$ becomes a nest interval in $N(\Pi')$ and $\ell(I')=|I'|=1$. Moreover, it is clear that $p_i(G;\Pi)=p_i(G;\Pi')$ for $i=1,2$.
\end{remark}

As an illustration of the above remark, let us revisit the graph $G$ in Figure \ref{fig2}. The ear decomposition $\Pi$ of $G$ given in Example \ref{ex42} has a nest interval $I_2$ with $\ell(I_2)=1<|I_2|$. By applying the procedure in Remark \ref{rm43} to $\Pi$ we obtain the nested ear decomposition $\Pi'=(\pi_1',\pi_2,\pi_3',\pi_4,\pi_5)$, where $\pi_1'=\{1,2,3,7\}$ and $\pi_3'=\{4,5\}$.

In order to prove Theorem \ref{th42} (and several results below), we introduce here a special kind of nest interval which is useful for inductive arguments. Recall that a line of $G$ is a path all of whose internal vertices have degree 2. Let $\Pi$ be a nested ear decomposition of $G$. We say that a nest interval $I\in N(\Pi)$ is \emph{lined} in $G$ if all paths of $\sigma(I)^+$ are lines in $G$. The existence and some properties of this kind of nest interval are shown below.

\begin{lemma}\label{lm46}
 Let $\Pi$ be a nested ear decomposition of a series-parallel network $G$. If $N(\Pi)\ne\emptyset$, then there exists $I\in N(\Pi)$ which is lined in $G$.
\end{lemma}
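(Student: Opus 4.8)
The plan is to find a lined nest interval by an extremality argument rather than by induction. Since $N(\Pi)\neq\emptyset$ and, as distinct ears are edge-disjoint, each nest interval is a subpath of a unique ear, the set of ears of $\Pi$ that contain a nest interval is nonempty; I would let $\pi_j$ be such an ear with the index $j$ as large as possible. By (N2) the nest intervals contained in $\pi_j$ form a laminar family, so I can choose $I$ among them minimal under inclusion. The claim is that this $I$ is lined, i.e. that $I$, as well as every ear in $\sigma(I)$, is a line of $G$.

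The key mechanism is the following consequence $(\star)$ of the maximality of $j$: if an ear $\pi_k$ is nested in an ear $\pi_l$, then $\pi_l$ contains the nest interval of $\pi_k$, and therefore $l\le j$. I would use $(\star)$ together with (ED3), which guarantees that an internal vertex of an ear lies in no earlier ear, and hence that no vertex is an internal vertex of two different ears.

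First I would show $I$ is a line. If not, some internal vertex $v$ of $I$ --- hence an internal vertex of $\pi_j$, using the convention that every vertex of the cycle $\pi_1$ counts as internal --- is an end vertex of some ear $\pi_k$. If $\pi_k$ is nested in $\pi_l$, then $v\in\pi_l$; since $v$ is internal in $\pi_j$, (ED3) forces $l\ge j$, while $(\star)$ forces $l\le j$, so $l=j$. Thus $\pi_k$ is nested in $\pi_j$ with nest interval $I'\subseteq\pi_j$, and $v$ is an end vertex of $I'$. Applying (N2) to $I$ and $I'$: they share the vertex $v$, hence are not disjoint, so one contains the other; since $v$ is interior to $I$ but an endpoint of $I'$, the inclusion $I\subseteq I'$ is impossible, so $I'\subsetneq I$, contradicting the minimality of $I$. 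Next I would show that every $\pi_i\in\sigma(I)$ is a line. As $\pi_i$ is nested in $\pi_j$ we have $i>j$. If $\pi_i$ were not a line, some internal vertex $w$ of $\pi_i$ would be an end vertex of an ear $\pi_k$; writing $\pi_k$ as nested in $\pi_l$, we get $w\in\pi_l$, so (ED3) gives $l\ge i>j$, contradicting $(\star)$.

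The step I expect to require the most care is proving that $I$ is a line --- specifically the bookkeeping around (N2): checking that $I$ and $I'$ genuinely share the vertex $v$, that ``not disjoint'' really does rule out $I\subseteq I'$ (an interior vertex of $I$ would then be forced to be interior to $I'$, contradicting that $v$ is an endpoint of $I'$), and that the resulting containment $I'\subsetneq I$ is proper, all handled uniformly whether or not $\pi_j=\pi_1$. Everything else follows quickly once $(\star)$ and (ED3) are in place; in particular the case of the ears $\pi_i\in\sigma(I)$ is essentially immediate.
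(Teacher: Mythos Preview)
Your argument is correct and takes a genuinely different route from the paper's. The paper proves the lemma by induction on $|N(\Pi)|$: it picks any inclusion-minimal $I\in N(\Pi)$ (so $I$ is automatically a line), and if some $\pi_j\in\sigma(I)$ fails to be a line, it builds an auxiliary nested ear decomposition $\Pi_1=\{\pi_j\cup I\}\cup\Pi(\pi_j)$ of the subgraph $G_1$ spanned by $\pi_j$, $I$, and all ears sequentially nested in $\pi_j$; since $N(\Pi_1)\subseteq N(\Pi)-I$ is nonempty, induction yields $I_1\in N(\Pi_1)$ lined in $G_1$, and the paper then asserts that $I_1$ is also lined in $G$. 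Your extremality approach---maximal index $j$ among ears carrying a nest interval, then $I$ minimal within $\pi_j$---dispenses with the induction and the auxiliary pair $(G_1,\Pi_1)$ altogether, and in particular you never need the passage from ``lined in $G_1$'' to ``lined in $G$'' that the paper leaves as ``clearly''. The only place your write-up needs a sentence more of care is the one you already flag: when $\pi_j=\pi_1$ is the cycle, the (N2) containment/disjointness dichotomy is between arcs of a cycle rather than subpaths of a path, but since $v$ is interior to $I$ the two edges of $\pi_1$ at $v$ lie in $I$, so $I$ and $I'$ share an edge and the argument goes through verbatim.
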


\begin{proof}
We prove by induction on $p=|N(\Pi)|$. If $p=1$, then it is clear that the unique nest interval of $N(\Pi)$ is lined in $G$. Assume $p\geq 2$.
Let $I$ be a minimal element (with respect to inclusion) of $N(\Pi)$. Then $I$ is a line of $G$. So if all the ears in $\sigma(I)$ are lines, we are done. Otherwise, let $\pi_j\in \sigma(I)$ be not a line. We say that an ear $\pi_i$ is \emph{sequentially nested} in $\pi_j$ if there exists a sequence of ears $\pi_i,\pi_k,\ldots,\pi_j$ such that each ear is nested in the next. Denote by $\Pi(\pi_j)$ the subset of $\Pi$ consisting of all ears which are sequentially nested in $\pi_j$. Note that $\overline{\pi}_j=\pi_j\cup I$ is a cycle. So one may check that $\Pi_1=\{\overline{\pi}_j\}\cup\Pi(\pi_j)$ is a nested ear decomposition of the subgraph $G_1$ of $G$ induced by the ears in $\Pi_1$. We have $N(\Pi_1)\subseteq N(\Pi)-I$. Moreover, $N(\Pi_1)\ne\emptyset$ since $\pi_j$ is not a line. Hence, by induction there exists $I_1\in N(\Pi_1)$ that is lined in $G_1$. Clearly, $I_1$ is also lined in $G$.
\end{proof}

\begin{lemma}\label{lm47}
Let $\Pi$ be a nested ear decomposition of a series-parallel network $G$. Assume that $I\in N(\Pi)$ is lined in $G$. Let $G'$ be the subgraph of $G$ induced by the ears in $\Pi':=\Pi-\sigma(I)$. Then the following statements hold.
\begin{enumerate}
 \item
$\Pi'$ is a nested ear decomposition of $G'$ and $N(\Pi')=N(\Pi)-I$.

\item
$I$ is a removable series class of the cycle matroid $M(G)$.

\item
$I$ is a subset of a series class of $M(G')$. Moreover, $I$ is not removable in $M(G')$ unless $G'$ is a $2$-cycle.
\end{enumerate}
\end{lemma}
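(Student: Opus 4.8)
The plan is to treat (i) as bookkeeping, (ii) by an ``ear swap'', and to isolate the non-removability half of (iii) as the one genuine difficulty. For (i), everything rests on a single observation: if a path $P\in\sigma(I)^+$ is a line of $G$ — and all of them are, since $I$ is lined — then no ear of $\Pi$ has an end vertex in the interior of $P$ (such a vertex would have degree $\geq 3$); in particular no ear is nested in any $\pi_i\in\sigma(I)$, and neither $\pi_1$ nor the ear $\pi_{j_0}$ containing $I$ lies in $\sigma(I)$ (note $I\neq\pi_{j_0}$, since an ear nested in $\pi_{j_0}$ cannot have nest interval equal to all of $\pi_{j_0}$, so $I$ is a proper subpath). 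Hence deleting the ears $\sigma(I)$ keeps every end vertex of every surviving ear inside an earlier surviving ear, and $\Pi'$ inherits (ED1)--(ED3), (N1), (N2), with the nest intervals of surviving ears unchanged. Since the sets $\sigma(J)$ partition $\Pi-\pi_1$, we get $\sigma(J)\cap\sigma(I)=\emptyset$, hence $\sigma(J)\subseteq\Pi'$ for every $J\in N(\Pi)$ with $J\neq I$, which yields $N(\Pi')=N(\Pi)-I$.

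For (ii), choose any $\pi_{i_0}\in\sigma(I)$ (this set is non-empty) and form a new ear decomposition $\tilde\Pi$ of $G$ by replacing $\pi_{j_0}$ with $(\pi_{j_0}-I)\cup\pi_{i_0}$ and $\pi_{i_0}$ with $I$; this is a legitimate ear decomposition because $\pi_{i_0}$ has the same end vertices as $I$ and is internally disjoint from $\pi_{j_0}$ (its interior vertices have degree $2$ in $G$). In $\tilde\Pi$ the ear $I$ is a line of $G$, so nothing is nested in it and $\tilde\Pi-\{I\}$ is an ear decomposition of $G-I$; thus $G-I$ is a block and $M(G)-I$ is connected. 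As $I$ is a line of the block $G\neq K_2$, it is a subset of a series class of $M(G)$, and so, by the observation opening the proof of Proposition~\ref{pr34}, $I$ is a removable series class of $M(G)$.

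For (iii) the inclusion is immediate: the interior vertices of $I$ have degree $2$ in $G$, hence also in $G'\subseteq G$ since $I\subseteq G'$, so $I$ is a line of $G'$; as $G'$ contains the cycle $\pi_1$ it is not $K_2$, and $I$ is a subset of a series class of $M(G')$. For non-removability, first suppose $I$ is \emph{not} a maximal line of $G'$; then one end vertex of $I$ is an interior vertex of a longer line of $G'$, hence has degree $2$ in $G'$ and degree $1$ in $G'-I$, so $G'-I$ is not a block unless $G'-I\cong K_2$, and a short edge count (using $\pi_1\in\Pi'$) shows $G'-I\cong K_2$ forces $\Pi'=\{\pi_1\}=\{\pi_{j_0}\}$, whence $|I|=|\pi_1|-1\leq|\pi_1|/2$, i.e. $G'$ is a $2$-cycle. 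So assume $I$ is a maximal line of $G'$, necessarily a proper subpath of $\pi_{j_0}$. If $G'$ is a single cycle $\pi_1$, then $I$, being a nest interval inside $\pi_1$, is the shorter arc, so $G'-I=\pi_1-I$ is a path of length $\geq\lceil|\pi_1|/2\rceil$, which is a block only when $|\pi_1|=2$.

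The remaining case — $I$ a maximal line of $G'$ with $G'$ not a single cycle — is the step I expect to be the main obstacle. The key structural input is that (N2) for $\Pi$, inherited by $\Pi'$, forces every ear of $\Pi'$ nested in $\pi_{j_0}$ to have nest interval either disjoint from $I$ or properly containing $I$ — ``crossings'' are impossible, and the intervals properly containing $I$ form a chain. One then shows that, after deleting $I$, the part of $G'$ lying over the subpath of $\pi_{j_0}$ strictly on one side of $I$ (together with all ears nested within it) becomes a ``blob'' attached to the rest of $G'-I$ at a single vertex of $\pi_{j_0}$, which is therefore a cut vertex of $G'-I$; hence $G'-I$ is not a block. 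Converting this picture into a proof — ruling out that a straddling ear, or a detour through the earlier ears carrying $\pi_{j_0}$, reconnects the blob, and pinning down which side to work on when both end vertices of $I$ are interior to $\pi_{j_0}$ — is the delicate point, which I would settle by a careful analysis along the nesting forest of $\Pi'$ that keeps $\pi_{j_0}$ intact (it is not a line of $G'$) until $G'$ degenerates to a single cycle, where the shorter-arc estimate above finishes the argument.
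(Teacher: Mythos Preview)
Your treatments of (i) and (ii) are correct and agree with the paper: both rest on the observation that a line cannot host a nested ear, so deleting $\sigma(I)$ preserves the nested ear decomposition, and your ear-swap makes the removability of $I$ in $G$ explicit where the paper simply declares $I$ a removable line and invokes Proposition~\ref{pr34}.

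The gap is in the non-removability half of (iii). You dispose of the cases ``$I$ not a maximal line of $G'$'' and ``$G'$ a single cycle'' cleanly, but for the remaining case you give only a heuristic: locate a cut vertex by isolating a ``blob'' on one side of $I$ inside $\pi_{j_0}$. You yourself flag the obstacles --- straddling ears whose nest intervals properly contain $I$ reconnect the two sides of $\pi_{j_0}-I$, and if $j_0>1$ the earlier ears carrying $\pi_{j_0}$'s endpoints may do the same --- and your proposed resolution (``a careful analysis along the nesting forest'') is not a proof. So as written, (iii) is incomplete.

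The paper sidesteps this structural analysis with a contraction trick that you are missing. Contract $I$ down to a single edge $e$, setting $\tilde G':=G'/(I-e)$; since $I$ is a line, $G'-I=\tilde G'-e$ and $G'/I=\tilde G'/e$. The contracted sequence $\Pi'/(I-e)$ is a nested ear decomposition of $\tilde G'$, so $\tilde G'$ is a series-parallel network; and when $G'$ is not a $2$-cycle one checks (using $I\subsetneq\pi_{j_0}$ and, for $j_0=1$, the shorter-arc convention) that $\tilde G'$ has at least three edges and that $\Pi'/I$ is still a nested ear decomposition of $\tilde G'/e$, so $\tilde G'/e$ is connected. Now apply the dichotomy in Lemma~\ref{lm24}(ii)(c): in any connected minor of a series-parallel network with more than two elements, for each element either deletion or contraction is separable. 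Since $\tilde G'/e$ is connected, $\tilde G'-e=G'-I$ must be separable. This replaces your cut-vertex hunt with a one-line appeal to the defining property of series-parallel networks.
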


\begin{proof}
 (i) Since all ears of  $\sigma(I)$ are lines, there is no ear of $\Pi$ which is nested in any ear of $\sigma(I)$. It follows that $\Pi'$ is a nested ear decomposition of $G'$. Obviously, $N(\Pi')=N(\Pi)-I$.

(ii) It is clear that $I$ is a removable line of $G$. So by Proposition \ref{pr34}, $I$ is a removable series class of $M(G)$.

(iii) Since $I$ is a line of $G'$, it is a subset of a series class of $M(G')$. Let us prove that $I$ is not removable in $M(G')$ when $G'$ is not a $2$-cycle. Suppose $I$ lies on the ear $\pi_j$ of $\Pi'$ and $\Pi'=(\pi_1,\ldots,\pi_j,\ldots,\pi_i)$. Let $e$ be an edge of $I$. Denote by $\tilde{G}'$ the contraction of $G'$ by all the edges of $I$ but $e$, i.e. $\tilde{G}'=G'/(I-e)$. Then $G'/I=\tilde{G}'/e$, and since $I$ is a line, $G'-I=\tilde{G}'-e$.  It is easily seen that
\[\Pi'/(I-e):=(\pi_1,\ldots,\pi_j/(I-e),\ldots,\pi_i)\]
 is a nested ear decomposition of $\tilde{G}'$, where $\pi_j/(I-e)$ denotes the contraction of $\pi_j$ by $I-e$. Thus we have: (a) $\tilde{G}'$ is a series-parallel network. We will further show that: (b) $\tilde{G}'$ has at least 3 edges, and (c) $G'/I$ is a series-parallel network with a nested ear decomposition $ \Pi'/I:=(\pi_1,\ldots,\pi_j/I,\ldots,\pi_i)$. First, assume $j>1$. Then (b) is clear from the ear decomposition of $\tilde{G}'$ given above. (c) is also clear because $I$ is properly contained in $\pi_j$ (by definition) and $I$ is not a nest interval of any ear in $\Pi'$. Next, consider the case $j=1$. By convention, the length of the path $\pi_1-I$ is at least that of $I$. If $\pi_1$ is a 2-cycle, then $\sigma(I)=\Pi-\pi_1$ since $I$ is lined in $G$. Consequently, $G'$ would be $\pi_1$ and hence a 2-cycle, contradicting the assumption. So $\pi_1$ has at least 3 edges, whence $\pi_1/I$ is a non-loop cycle. It follows that (b) and (c) are also true in this case. Now from (a), (b), (c), and Lemma
\ref{lm24}(ii) we deduce that $G'-I=\tilde{G}'-e$ is not a block, i.e. $I$ is not removable in $M(G')$.
\end{proof}

We are now ready to prove Theorem \ref{th42}.

\begin{proof}[Proof of Theorem \ref{th42}]
We first prove the formula $p_1(G;\Pi)=|\overline{F}(G)|$ for every ear decomposition $\Pi$ of $G$. We argue by induction on the nullity $n$ of $G.$ If $n=1$, then $G$ is a cycle. In this case, $G$ has a unique ear decomposition $\Pi=(G)$ and $p_1(G;\Pi)=|\overline{F}(G)|=0$. Now suppose that $n>1$. Let $\Pi$ be an arbitrary ear decomposition of $G$. By Lemma \ref{lm46}, there exists a nest interval $I\in N(\Pi)$ which is lined in $G$. Let $\sigma(I)=\{\pi_{i_1},\ldots,\pi_{i_s}\}$ and $\Pi'=\Pi-\sigma(I)$.  Then by Lemma \ref{lm47}(i), $\Pi'$ is a nested ear decomposition of the subgraph $G'$ of $G$ induced by the ears in $\Pi'$ and
\begin{equation}\label{eq1}
 N(\Pi)=N(\Pi')\sqcup \{I\},
\end{equation}
where $\sqcup$ denotes disjoint union. We distinguish two cases:

Case 1: $\ell(I)=1$. By Remark \ref{rm43}, we may assume $I$ contains only one edge $e$ of $G$. Then $G=P(G',D_1,\ldots,D_s)$ with respect to the baseedge $e$, where $D_t=I\cup \pi_{i_t}$ is a cycle for $t=1,\ldots,s$. We will show that
\begin{equation}\label{eq2}
 F(G)=F(G')\sqcup [e],
\end{equation}
where $[e]$ is the parallel class of $e$ in $G$. First, we check that $F(G')\cap [e]=\emptyset$. When $G'$ is a 2-cycle, this is true since $F(G')=\emptyset$. Assume $G'$ has at least 3 edges. Then $e$ is the only edge of $[e]$ which belongs to $G'$ (since all other edges of $[e]$ are ears in $\sigma(I)$). From the proof of Lemma \ref{lm47}(iii) we have $G'/e=G'/I$ is a block. So $e\not\in F(G')$, and therefore, $F(G')\cap [e]=\emptyset$. It remains to prove that $F(G)=F(G')\cup [e]$. By Lemma \ref{lm22}(iii), $e\in F(G)$. This implies $[e]\subseteq F(G)$. Next, let $f$ be an edge of $G-[e]$. If $f$ is in $G'$, then $G'$ has at least 3 edges (otherwise $G'$ would be a 2-cycle and $f\in [e]$). By Lemma \ref{lm22}(iii), $G/f=P(G'/f,D_1,\ldots,D_s).$
 Hence, it follows from Lemma \ref{lm22}(ii) that $f\in F(G')$ if and only if $f\in F(G)$. On the other hand, if $f$ belongs to some cycle $D_t$, then $D_t$ also has at least 3 edges. So by Lemma \ref{lm22}(ii), (iii),
$G/f=P(G',D_1,\ldots,D_t/f,\ldots,D_s)$
is a block, i.e. $f\not\in F(G)$. We conclude that $F(G)=F(G')\cup [e]$ and thereby \eqref{eq2} holds. Now from \eqref{eq1}, \eqref{eq2}, and the induction hypothesis we obtain
\[p_1(G;\Pi)=p_1(G';\Pi')+1=|\overline{F}(G')|+1=|\overline{F}(G)|.\]

Case 2: $\ell(I)=k>1$. Then $p_1(G;\Pi)=p_1(G';\Pi')$. So by the induction hypothesis, it suffices to prove that $F(G)=F(G')$. Let $e$ be an edge of $I$. Denote by $\tilde{G}$ the contraction graph $G/(I-e)$. Then since $I$ is a line, $G\cong S(\tilde{G},C_k)$, where $C_k$ is a $k$-cycle. We will show that $F(G)=F(\tilde{G})-e$. For any edge $f$ of $I$, $G-f$ is not a block since $I$ is a line. Hence by Lemma \ref{lm25}(iv), $G/f$ is a block, i.e. $f\not\in F(G)$. Now let $f$ be an edge of $G-I$. Then $f$ is also an edge of $\tilde{G}-e$. So by Lemma \ref{lm22}(iii), $G/f\cong S(\tilde{G}/f,C_k)$. Thus, it follows from Lemma \ref{lm22}(ii) that $f\in F(G)$ if and only if $f\in F(\tilde{G})$. Consequently, $F(G)=F(\tilde{G})-e$. By the same argument, we obtain $F(G')=F(\tilde{G}')-e$, in which $\tilde{G}'=G'/(I-e)$. Note that $\tilde{I}:=\{e\}$ is a nest
interval in $N(\tilde{\Pi})$, where $\tilde{\Pi}$ is the ear decomposition of $\tilde{G}$ induced by $\Pi$ ($\tilde{\Pi}=\Pi/(I-e)$ in the notation of the proof of Lemma \ref{lm47}(iii)). So as shown in Case 1, we have $F(\tilde{G})=F(\tilde{G}')\cup \tilde{[e]}$ with $\tilde{[e]}$ the parallel class of $e$ in $\tilde{G}$. Since $\ell(I)>1$, it is clear that $\tilde{[e]}=\{e\}$. Now by combining all the equalities just established, we get
\[F(G)=F(\tilde{G})-e=(F(\tilde{G}')\cup \tilde{[e]})-e=(F(\tilde{G}')\cup \{e\})-e=F(\tilde{G}')-e=F(G').\]

The first assertion of the theorem has been proved. For the second one, suppose $G$ is simple. Then $\overline{F}(G)=F(G)$. From the proof of Lemma \ref{lm22}(viii) (and Lemma \ref{lm22}(v)) we know that $F(G)$ is the set of baseedges of the parallel irreducible decomposition of $G$. Thus the second assertion of the theorem follows from the first one.
\end{proof}

\begin{remark}\label{rm46}
With the notation of Theorem \ref{th42}, one can always find a parallel irreducible decomposition of $G$ whose set of baseedges is $\overline{F}(G)$, even when $G$ is not simple. Indeed, let $e$ be an element of $\overline{F}(G)$ with a non-trivial parallel class $[e]=\{e,e_1,\ldots,e_k\}$. Then $G=P(G',D_1,\ldots,D_k)$ with respect to the baseedge $e$, where $G'=G-\{e_1,\ldots,e_k\}$ and $D_i=\{e,e_i\}$ is a 2-cycle for $i=1,\ldots,k$. Since $F(G')=F(G)-[e]$, we have $\overline{F}(G')=\overline{F}(G)-e$. Now a desired parallel irreducible decomposition of $G$ can be derived by induction.
\end{remark}

The corollary below, which follows at once from Theorem \ref{th42} and Lemma \ref{lm22}(vi), provides an interesting characterization of parallel irreducibility of series-parallel networks in terms of ear decompositions.

\begin{corollary}\label{lm43}
  Let $G$ be a series-parallel network with at least 2 edges. Then $G$ is parallel irreducible if and only if $p_1(G;\Pi)=0$ for an ear decomposition $\Pi$ of $G$.
\end{corollary}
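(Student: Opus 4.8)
The plan is to read the statement straight off Theorem \ref{th42}. Since $G$ is a series-parallel network with at least two edges, every ear decomposition of $G$ is automatically nested, and Theorem \ref{th42} gives $p_1(G;\Pi) = |\overline F(G)|$ for every such $\Pi$. The right-hand side is an invariant of $G$ that makes no reference to $\Pi$, so $p_1(G;\Pi)$ takes the same value for all ear decompositions of $G$; in particular the conditions ``$p_1(G;\Pi)=0$ for one $\Pi$'' and ``$p_1(G;\Pi)=0$ for every $\Pi$'' coincide, and both are equivalent to $\overline F(G)=\emptyset$. It therefore remains only to prove that $G$ is parallel irreducible if and only if $\overline F(G)=\emptyset$.

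For that I would invoke Remark \ref{rm46}: there is a parallel irreducible decomposition $G = P(N_1,\dots,N_s)$ of $G$ whose set of baseedges is exactly $\overline F(G)$. Such a decomposition has $s-1$ baseedges (counted with multiplicity), so $\overline F(G)=\emptyset$ forces $s=1$, i.e.\ $G=N_1$ is parallel irreducible. Conversely, if $\overline F(G)\ne\emptyset$ then $s\ge 2$, and rewriting the iterated parallel connection as $G = P(P(N_1,\dots,N_{s-1}),N_s)$ with respect to the basepoint $e_{s-1}$ exhibits $G$ as a parallel connection of two non-trivial matroids (each $N_i$ is non-trivial, so $P(N_1,\dots,N_{s-1})$ has at least two elements); hence $G$ is not parallel irreducible at $e_{s-1}$, and so not parallel irreducible.

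A variant that avoids Remark \ref{rm46} runs as follows: by Lemma \ref{lm22}(vi), together with the translation to graphs in Remark \ref{rm22}(i), $G$ is parallel irreducible if and only if $G/f$ is a block for every edge $f$ of $G$, that is, if and only if $F(G)=\emptyset$; and $F(G)=\emptyset$ exactly when $\overline F(G)=\emptyset$, since the inclusion $\overline F(G)\subseteq F(G)$ is immediate, while if $G$ has a non-trivial parallel class and at least three edges then contracting the edge $e$ of that class lying in $\overline G$ produces a loop in a graph with at least two edges, whence $e\in\overline F(G)$ (the only remaining case, $|E(G)|=2$, is a $2$-cycle, for which both sets are empty). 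I do not anticipate a genuine obstacle: all the substance sits in Theorem \ref{th42}, and the only points requiring a little care are recording that $|\overline F(G)|$ is decomposition-independent (which is what makes ``one'' and ``every'' interchangeable) and disposing of the non-simple and two-edge boundary cases in the comparison of $F(G)$ with $\overline F(G)$.
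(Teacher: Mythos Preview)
Your proposal is correct and follows exactly the route the paper intends: the paper simply records that the corollary ``follows at once from Theorem \ref{th42}'', and your argument supplies the obvious details behind that remark (invariance of $|\overline F(G)|$, together with the equivalence of $\overline F(G)=\emptyset$ and parallel irreducibility via either Remark \ref{rm46} or Lemma \ref{lm22}(vi)). Both of your variants are sound, including the handling of the non-simple and two-edge boundary cases.
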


We now come to the main result of the paper.

\begin{theorem}\label{th45}
 Let $G$ be a series-parallel network with at least 2 edges. Then for any ear decomposition $\Pi$ of $G$ we have $\delta_1(M(G))=p_2(G;\Pi)$, where $M(G)$ is the cycle matroid of $G$. In particular, $p_2(G;\Pi)$ does not depend on $\Pi$.
\end{theorem}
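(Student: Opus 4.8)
The plan is to run an induction on the nullity $n$ of $G$ (which, by Whitney's theorem, equals the number of ears of any ear decomposition $\Pi$), imitating the proof of Theorem \ref{th42}: we peel off a lined nest interval $I$, pass to the subgraph $G'$ induced by the remaining ears, and track $\delta_1(M(-))$ and $p_2(-;\cdot)$ in place of $|\overline{F}(-)|$ and $p_1(-;\cdot)$. The role played there by the multiplicativity of $h$-polynomials under parallel connection will here be played jointly by Lemma \ref{lm25}(vii) and Corollary \ref{co33}, and the structural input will again come from Lemmas \ref{lm46} and \ref{lm47}. In the inductive step we fix an arbitrary $\Pi$, choose by Lemma \ref{lm46} a nest interval $I\in N(\Pi)$ that is lined in $G$, write $\sigma(I)=\{\pi_{i_1},\dots,\pi_{i_s}\}$, put $\Pi'=\Pi-\sigma(I)$, and let $G'$ be the subgraph induced by the ears in $\Pi'$; by Lemma \ref{lm47}(i), $\Pi'$ is a nested ear decomposition of $G'$, $N(\Pi)=N(\Pi')\sqcup\{I\}$, and $G'$ (a series-parallel network containing the cycle $\pi_1$) has nullity $n-|\sigma(I)|<n$, so the induction hypothesis applies to $G'$.

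For the base case $n=1$ the graph $G$ is a non-loop cycle, so $M(G)$ is a circuit; hence $\delta_1(M(G))=0$ by Lemma \ref{lm25}(v) (or because $M(G)\in\mathcal S_0$ has symmetric $h$-vector, Lemma \ref{lm26}), while $N(\Pi)=\emptyset$ forces $p_2(G;\Pi)=0$. Now suppose $n>1$ and first treat the case $\ell(I)=1$. After the interchange of Remark \ref{rm43} we may assume $|I|=1$, say $I=\{e\}$; as in Case~1 of the proof of Theorem \ref{th42} this gives $G=P(G',D_1,\dots,D_s)$ with baseedge $e$, each $D_t=I\cup\pi_{i_t}$ being a cycle, so $M(G)=P(M(G'),M(D_1),\dots,M(D_s))$ by Lemma \ref{lm22}(v). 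Applying Lemma \ref{lm25}(vii) and using $\delta_1(M(D_t))=0$ yields $\delta_1(M(G))=\delta_1(M(G'))$; and since $\ell(I)=1$ means $I$ is counted by $p_1$ and not by $p_2$, we also have $p_2(G;\Pi)=p_2(G';\Pi')$. The induction hypothesis for $G'$ closes this case.

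The substantive case is $\ell(I)=k>1$, where $I$ is counted by $p_2$, so $p_2(G;\Pi)=p_2(G';\Pi')+1$ and we must prove $\delta_1(M(G))=\delta_1(M(G'))+1$. Fix $e\in I$ and set $\tilde G=G/(I-e)$, $\tilde G'=G'/(I-e)$. Since $I$ is a line of $G$ it is a subset of a series class of $M(G)$; since $|I|\ge\ell(I)=k\ge 2$ and no edge of $G$ joins the two end vertices of $I$ (such an edge would be a length-$1$ ear with nest interval $I$, i.e. a member of $\sigma(I)$, contradicting $\ell(I)>1$), the set $I\cup\{f\}$ is not a circuit of $M(G)$ for any $f$; and $I$ is removable in $M(G)$ by Lemma \ref{lm47}(ii). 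Hence Corollary \ref{co33} gives $\delta_1(M(G))=\delta_1(M(\tilde G))+1$. In $\tilde G$, with the induced nested ear decomposition $\tilde\Pi=\Pi/(I-e)$, the contracted interval $\tilde I=\{e\}$ is again a lined nest interval of length $1$ with $\sigma(\tilde I)=\sigma(I)$, so the $\ell(I)=1$ argument above applied to $\tilde G$ gives $\tilde G=P(\tilde G',D_1,\dots,D_s)$ and thus $\delta_1(M(\tilde G))=\delta_1(M(\tilde G'))$. Finally, by Lemma \ref{lm47}(iii), $I$ is a subset of a series class of $M(G')$, and since $\ell(I)>1$ forces $G'$ not to be a $2$-cycle, $I$ is not removable in $M(G')$; as $I\cup\{f\}$ is a fortiori not a circuit of $M(G')$, Corollary \ref{co33} gives $\delta_1(M(G'))=\delta_1(M(\tilde G'))$. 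Chaining these identities gives $\delta_1(M(G))=\delta_1(M(\tilde G))+1=\delta_1(M(\tilde G'))+1=\delta_1(M(G'))+1$, and the induction hypothesis for $G'$ completes the proof; the independence of $p_2(G;\Pi)$ from $\Pi$ then follows at once because $\delta_1(M(G))$ does not involve $\Pi$.

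I expect the main obstacle to be precisely this last case: obtaining an increment of exactly $1$ requires the four-term chain $G\to\tilde G\to\tilde G'\to G'$, and it works only because Lemma \ref{lm47}(ii),(iii) provides removability of $I$ on the $M(G)$-side and non-removability of $I$ on the $M(G')$-side, so that Corollary \ref{co33} contributes $+1$ at the first passage and nothing at the last. Checking the hypotheses of Corollary \ref{co33} — that $I$ lies in a series class and that $I\cup\{f\}$ is never a circuit — from the combinatorics of a nested ear decomposition is the other delicate point, but it reduces to the observation above that any edge "parallel" to the line $I$ would have to be a length-$1$ ear belonging to $\sigma(I)$, which $\ell(I)>1$ rules out.
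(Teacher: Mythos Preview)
Your proof is correct and follows essentially the same route as the paper's own argument: induction on the nullity, peeling off a lined nest interval $I$ via Lemma~\ref{lm46}, splitting into the cases $\ell(I)=1$ and $\ell(I)>1$, and in the second case running the chain $G\to\tilde G\to\tilde G'\to G'$ with Corollary~\ref{co33} and Lemma~\ref{lm47}(ii),(iii) supplying the $+1$ on the $G$-side and $+0$ on the $G'$-side. The paper's proof is organized identically, invoking the same lemmas at the same places.
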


\begin{proof}
 We argue by induction on the nullity $n$ of $G$ as in the proof of Theorem \ref{th42}. If $n=1$, then $G$ is a cycle. So $G$ has a unique ear decomposition $\Pi=(G)$ and $p_2(G;\Pi)=0$. On the other hand, $\delta_1(M(G))=0$ by Lemma \ref{lm25}(v). Thus the formula holds in this case. Now assume $n>1$. For an ear decomposition $\Pi$ of $G$, let $I, \Pi',G'$ have the same meaning as in the proof of Theorem \ref{th42}. Consider the following cases:

Case 1: $\ell(I)=1$. Then $G=P(G',D_1,\ldots,D_s)$, where the $D_i$ are cycles. So it follows from Lemma \ref{lm25}(v), (vii) that
\[\delta_1(M(G))=\delta_1(M(G'))+\delta_1(M(D_1))+\cdots+\delta_1(M(D_s))=\delta_1(M(G')).\]
Since $\ell(I)=1$, $p_2(G;\Pi)=p_2(G';\Pi')$. Thus from the induction hypothesis we obtain
\[\delta_1(M(G))=\delta_1(M(G'))=p_2(G';\Pi')=p_2(G;\Pi).\]

Case 2: $\ell(I)>1$. In this case, we have $p_2(G;\Pi)=p_2(G';\Pi')+1$. So by the induction hypothesis, we only need to show that $\delta_1(M(G))=\delta_1(M(G'))+1$. Let $e$ be an edge of $I$. Let $\tilde{G}=G/(I-e)$ and $\tilde{G}'=G'/(I-e)$. Since $\ell(I)>1$, there is no edge $f$ of $G$ such that $I\cup \{f\}$ is a cycle. It also follows from $\ell(I)>1$ that $G'$ is not a 2-cycle. Thus by Corollary \ref{co33} and Lemma \ref{lm47}(ii), (iii),
\begin{equation}\label{eq5}
 \delta_1(M(G))=\delta_1(M(\tilde{G}))+1,\quad \delta_1(M(G'))=\delta_1(M(\tilde{G}')).
\end{equation}
Let $\tilde{\Pi}=\Pi/(I-e)$ be the ear decomposition of $\tilde{G}$ induced by $\Pi$. Then $\tilde{I}:=\{e\}$ is a nest interval in $N(\tilde{\Pi})$ with $\ell(\tilde{I})=1$. So by Case 1, $\delta_1(M(\tilde{G}))=\delta_1(M(\tilde{G}')).$ Combining the latter equality with \eqref{eq5} we obtain $\delta_1(M(G))=\delta_1(M(G'))+1$. The theorem has been proved.
\end{proof}

\begin{example}\label{ex49}
Let $G$ be the graph in Figure \ref{fig2}. Let $\Pi$ be the ear decomposition of $G$ considered in Example \ref{ex42}. Then $\delta_1(M(G))=p_2(G;\Pi)=1.$ Note that the $h$-polynomial of $BC(M(G))$ is $x^7+4x^6+9x^5+12x^4+10x^3+5x^2+x$.
\end{example}

\begin{remark}
 Theorem \ref{th45}, unfortunately, does not hold for general graphic matroids. For instance, consider the graph $H$ depicted in Figure \ref{fig5}. Clearly, $H$ has a $K_4$-minor and so it is not a series-parallel network. Let $\Pi$ be the ear decomposition: $\pi_1=\{1,2,3,4\},$ $\pi_2=\{5,7\},$ $\pi_3=\{6\},$ $\pi_4=\{8\}$. Since this decomposition is not nested, $p_2(H;\Pi)$ does not make sense. Even if we tried to extend the definition of $p_2$ to include this case, then from the lengths of the ears in $\Pi$ we would have $p_2(H;\Pi)\le1$. However, the $h$-polynomial of $BC(M(H))$ is $x^4+4x^3+6x^3+3x$, and so $\delta_1(M(H))=2$.
\end{remark}

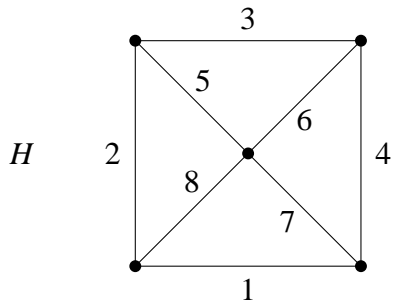
\begin{figure}[htb]
	\begin{tikzpicture}[scale=1.5]
	\draw (0,2)-- (0,0);
	\draw (2,0)-- (0,0);
	\draw (0,2)-- (2,2);
	\draw (2,2)-- (2,0);
	\draw (2,2)-- (1,1);
	\draw (1,1)-- (2,0);
	\draw (1,1)-- (0,0);
	\draw (1,1)-- (0,2);
	\draw (1,-0.2) node {1};
	\draw (-0.2,1) node {2};
	\draw (1,2.2) node {3};
	\draw (1.5,1.3) node {6};
	\draw (0.6,1.65) node  {5};
	\draw (1.35,0.4) node {7};
	\draw (0.5,0.75) node  {8};
	\draw (2.2,1) node {4};
	\draw (-1,1) node {$H$};

	\begin{scriptsize}
	\fill [color=black] (0,0) circle (1.5pt);
	\fill [color=black] (2,0) circle (1.5pt);
	\fill [color=black] (0,2) circle (1.5pt);
	\fill [color=black] (2,2) circle (1.5pt);
	\fill [color=black] (1,1) circle (1.5pt);
	\end{scriptsize}
	\end{tikzpicture}
	\caption{A non series-parallel network.}\label{fig5}
\end{figure}

From Theorems \ref{th42} and \ref{th45} we immediately get the following consequence which gives an affirmative answer to the question raised at the beginning of this section.

\begin{corollary}\label{co410}
 Let $G$ be a series-parallel network with at least 2 edges. Let $M(G)$ be the cycle matroid of $G$. Then the number $p$ of nest intervals appearing in a nested ear decomposition of $G$ is independent of the decomposition. Moreover, $\delta_1(M(G))\leq p$ with equality if and only if $M(G)$ is parallel irreducible.
\end{corollary}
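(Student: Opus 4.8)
The plan is to read off everything from the two preceding theorems. By Theorem \ref{th42}, for any nested ear decomposition $\Pi$ of $G$ we have $p_1(G;\Pi)=|\overline{F}(G)|$, which is an invariant of $G$; by Theorem \ref{th45}, $p_2(G;\Pi)=\delta_1(M(G))$, again independent of $\Pi$. Since $N(\Pi)$ is partitioned into the nest intervals $I$ with $\ell(I)=1$ and those with $\ell(I)>1$, the number $p=|N(\Pi)|$ equals $p_1(G;\Pi)+p_2(G;\Pi)=|\overline{F}(G)|+\delta_1(M(G))$, which depends only on $G$. This gives the first assertion and also the inequality $\delta_1(M(G))=p_2(G;\Pi)\leq p_1(G;\Pi)+p_2(G;\Pi)=p$.

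It remains to identify when equality $\delta_1(M(G))=p$ holds, i.e. when $p_1(G;\Pi)=0$. First I would invoke Corollary \ref{lm43}: $p_1(G;\Pi)=0$ for one (equivalently every) ear decomposition if and only if $G$ is parallel irreducible. Hence $\delta_1(M(G))=p$ exactly when $G$ is parallel irreducible, and since $M(G)$ is parallel irreducible precisely when $G$ is (see Remark \ref{rm22}(i) and Lemma \ref{lm22}(v)), the stated characterization follows. There is essentially no obstacle here: the whole statement is a bookkeeping consequence of Theorems \ref{th42}, \ref{th45} and Corollary \ref{lm43}, with the only point worth spelling out being that the nest intervals of $\Pi$ are exhausted by the two types counted by $p_1$ and $p_2$, so that $p=p_1(G;\Pi)+p_2(G;\Pi)$.
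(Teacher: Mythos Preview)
Your proposal is correct and is exactly the argument the paper has in mind: the corollary is stated right after Theorems \ref{th42} and \ref{th45} with the remark that it follows immediately from them, and your writeup just makes explicit the decomposition $p=p_1(G;\Pi)+p_2(G;\Pi)$ together with the appeal to Corollary \ref{lm43} for the equality case.
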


The next corollary also follows easily from Theorem \ref{th45}.

\begin{corollary}\label{co411}
Let $G,G'$ be series-parallel networks such that $G$ is obtained from $G'$ by adding a new ear. Then $\delta_1(M(G'))\leq \delta_1(M(G))\leq \delta_1(M(G'))+1.$
\end{corollary}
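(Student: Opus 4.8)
The plan is to reduce everything to the formula $\delta_1(M(G))=p_2(G;\Pi)$ from Theorem \ref{th45}. Fix a nested ear decomposition $\Pi'=(\pi_1,\ldots,\pi_{n-1})$ of $G'$, where $n-1$ is the nullity of $G'$. Since $G$ is obtained from $G'$ by adding a new ear $\pi_n$, the sequence $\Pi=(\pi_1,\ldots,\pi_{n-1},\pi_n)$ is an ear decomposition of $G$; because $G$ is a series-parallel network, this ear decomposition is automatically nested by the Eppstein characterization cited in the text. By Theorem \ref{th45}, $\delta_1(M(G'))=p_2(G';\Pi')$ and $\delta_1(M(G))=p_2(G;\Pi)$, so it suffices to show $p_2(G';\Pi')\le p_2(G;\Pi)\le p_2(G';\Pi')+1$.

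Next I would compare the sets of nest intervals $N(\Pi')$ and $N(\Pi)$ together with the values $\ell(\cdot)$. Adding the single ear $\pi_n$ can affect the count $p_2$ in only two ways. First, $\pi_n$ may itself be nested in some $\pi_j$ with $j<n$; its nest interval $I_0$ is then either an existing element of $N(\Pi')$ or a genuinely new one. Second, and this is the subtler point, inserting $\pi_n$ into $\sigma(I_0)^+$ could lower $\ell(I_0)$ from a value $>1$ to $1$ (if $\pi_n$ has length $1$), which would move $I_0$ from being counted in $p_2$ to being counted in $p_1$; but note that for every other nest interval $I\in N(\Pi')$ the set $\sigma(I)^+$ is unchanged, so $\ell(I)$ is unchanged. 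Combining these observations: passing from $\Pi'$ to $\Pi$ either leaves $p_2$ fixed, increases it by $1$ (when $I_0$ is new with $\ell(I_0)>1$, or when... ), or decreases it by $1$ (when $I_0$ was a pre-existing interval with $\ell(I_0)>1$ that gets pulled down to $\ell=1$ by a length-one $\pi_n$). A priori this last case seems to threaten the lower bound, so the argument needs one more ingredient.

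The key point resolving the apparent difficulty is monotonicity of $\delta_1$ under adding an ear in the other direction — but that is exactly what we are proving, so instead I would argue directly with the invariance clause of Theorem \ref{th45}: since $p_2(G;\Pi)$ does not depend on the choice of ear decomposition of $G$, I am free to choose $\Pi$ cleverly. Specifically, choose the ear decomposition of $G$ so that the new ear $\pi_n$ is added \emph{last}; then for any nest interval $I$ of $G$, the set $\sigma^+_{G}(I)$ either equals $\sigma^+_{G'}(I)$ (if $\pi_n\notin\sigma_G(I)$ and $I\ne\pi_n$'s interval) or is obtained from it by adjoining $\pi_n$. In the former case $\ell_G(I)=\ell_{G'}(I)$; in the latter, $\ell_G(I)\le\ell_{G'}(I)$ but $\ell_G(I)\ge 1$ trivially, and crucially $I$ is still a nest interval with $\ell_G(I)>1$ \emph{unless} $\pi_n$ is an edge-for-edge short ear — but in that case $\ell_{G'}(I)$ need not have been $>1$ either. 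Being careful, the net change $p_2(G;\Pi)-p_2(G';\Pi')$ is: $+1$ if $\pi_n$ creates a new nest interval $I_0$ with $\ell(I_0)>1$; $0$ in all remaining cases where no interval is demoted; and $0$ again even in the demotion case, because demoting $I_0$ from $p_2$ to $p_1$ is balanced by the fact that $I_0$ being demoted means $\ell(I_0)$ was already "almost $1$", and the only way $\ell$ drops is via a length-$1$ ear, which cannot simultaneously create a new $\ell>1$ interval. The main obstacle — and the part requiring the most care — is precisely a clean case analysis showing the demotion case (which would decrease $p_2$) is incompatible with any simultaneous increase, so that $|p_2(G;\Pi)-p_2(G';\Pi')|\le 1$ and in fact $p_2(G;\Pi)\ge p_2(G';\Pi')$ whenever $\pi_n$ has length $\ge 2$, while a length-$1$ ear changes $p_2$ by at most... hmm, by $0$ or decreases by... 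Let me restructure: a length-$1$ new ear $\pi_n$ has nest interval $I_0$ with $\ell(I_0)=1$, contributing to $p_1$ not $p_2$, and the only way it can change $p_2$ is by demoting a pre-existing $I_0\in N(\Pi')$, giving $p_2(G;\Pi)=p_2(G';\Pi')-1$; but then $\delta_1$ would decrease, contradicting the asserted lower bound. The resolution: such a demotion is impossible because if $I_0\in N(\Pi')$ already had an ear of length $1$ in $\sigma_{G'}(I_0)$ then $\ell_{G'}(I_0)=1$ already, and if it did not, then $I_0$ itself must have length... This is the crux, and I would handle it by invoking Remark \ref{rm43} to normalize so that every $\ell=1$ interval literally has $|I|=1$, after which a length-$1$ ear nested in such an interval would force a multi-edge; the honest statement is that adding any ear to a nested decomposition yields $p_2(G;\Pi)\in\{p_2(G';\Pi'),p_2(G';\Pi')+1\}$, which together with Theorem \ref{th45} gives the claim.
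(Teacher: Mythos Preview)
Your approach---extend a nested ear decomposition $\Pi'=(\pi_1,\ldots,\pi_{n-1})$ of $G'$ to $\Pi=(\pi_1,\ldots,\pi_n)$ and compare $p_2$ via Theorem~\ref{th45}---is exactly the route the paper intends, and the upper bound $\delta_1(M(G))\le\delta_1(M(G'))+1$ goes through cleanly: adding $\pi_n$ either creates one new nest interval $I_0$ (contributing at most $+1$ to $p_2$) or enlarges $\sigma(I_0)^+$ for an existing $I_0\in N(\Pi')$, which can only lower $\ell(I_0)$ and hence cannot raise $p_2$.

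The lower bound is where your argument collapses, and your unease is well founded. You correctly isolate the ``demotion'' case---$I_0\in N(\Pi')$ with $\ell_{G'}(I_0)>1$ and $|\pi_n|=1$, forcing $\ell_G(I_0)=1$ and $p_2(G;\Pi)=p_2(G';\Pi')-1$---but your attempts to exclude it do not work: the appeal to ``contradicting the asserted lower bound'' is circular, and the normalization of Remark~\ref{rm43} is irrelevant since $|I_0|\ge2$ is entirely compatible with $\ell_{G'}(I_0)>1$. Your closing assertion that $p_2(G;\Pi)\in\{p_2(G';\Pi'),\,p_2(G';\Pi')+1\}$ is simply unproved. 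In fact the demotion case genuinely occurs: take $G'=K_{2,3}$ and let $G$ be $G'$ together with a new edge joining the two degree-$3$ vertices. Then $G=P(C_3,C_3,C_3)$ is a series-parallel network obtained from $G'$ by adding a single (length-$1$) ear, yet $\delta_1(M(G'))=1$ while $\delta_1(M(G))=0$ by Lemma~\ref{lm25}(vii). Thus the inequality $\delta_1(M(G'))\le\delta_1(M(G))$ fails in this example, and no argument along your lines can establish it. Note that the paper's only subsequent use of this corollary (in the proof of Theorem~\ref{th61}) invokes just the upper bound, which is unaffected.
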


As another application of Theorem \ref{th45}, we prove that the inequality in Corollary \ref{lm33}(ii) holds without the assumption that $e$ is contained in no 3-circuit of $M$.

\begin{corollary}\label{co412}
 Let $M$ be a series-parallel network of rank $r\geq2$ and $e$ an element in the ground set of $M$. If $M-e$ is disconnected, then $\delta_1(M/e)\leq \delta_1(M)$.
\end{corollary}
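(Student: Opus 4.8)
The plan is to reduce to the case where $e$ lies in no $3$-circuit, so that Corollary \ref{lm33}(ii) applies directly. First I would observe that if $M$ has rank $r\leq 3$, then $\delta_1(M/e)=\delta_1(M)=0$ by definition, so there is nothing to prove; hence assume $r\geq 4$. Let $G$ be a graphical series-parallel network with $M(G)=M$, and let $e$ also denote the corresponding edge of $G$. Since $M-e$ is separable, $e$ is not a coloop, so by Lemma \ref{lm25}(iv) the matroid $M/e$ is connected, i.e.\ $G/e$ is a block; and $M/e$ is a series-parallel network by Proposition \ref{pr32}(i).

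The key step is to choose an ear decomposition of $G$ adapted to $e$. Since $M-e$ is separable, $e$ is not a loop either; in fact one checks that $e$ belongs to a line (series class) of $G$ — because $M-e$ separable means precisely that $e$ lies in a series class of size $\geq 2$, hence the edge $e$ together with the other edges of that series class form a line $L$ of $G$. Contract all of $L$ except $e$ to reduce to the situation where $e$ is an edge whose endpoints both have degree $\geq 3$ in the relevant graph; alternatively, work directly with an ear decomposition $\Pi=(\pi_1,\ldots,\pi_n)$ of $G$ in which $e$ is an edge of $\pi_1$, arranged so that the line through $e$ is recovered as a prefix of $\pi_1$. Then I would split into two cases according to whether $e$ is in a $3$-circuit of $M$.

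If $e$ is in no $3$-circuit of $M$, then since every element of the series class $X\ni e$ of size $\geq 2$ is a coloop of $M-e$, $M-e$ has at least two connected components, and Corollary \ref{lm33}(ii) gives $\delta_1(M/e)\leq\delta_1(M)$ immediately. If $e$ \emph{is} in a $3$-circuit $\{e,f,g\}$, then $\{e,f\}$ (or $\{e,g\}$) is a series pair after contracting $g$; more usefully, I would use the series extension structure: write $M=S(N,C_2)$ where $C_2=\{e,e'\}$ is the $2$-circuit corresponding to subdividing an edge $e'$ of $N$ into $e,e'$ — this is possible exactly because $M-e$ is separable, so $e$ lies in a nontrivial series class, hence $M$ is a series extension at $e$. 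Then $M/e\cong N$ with the basepoint relabeled, $M/e'\cong N$ as well, and $M-e=\{e'\text{ as coloop}\}\oplus(N-e')=$ direct sum, so $\delta_1(M-e)=\delta_1(N-e')$; meanwhile by Lemma \ref{lm25}(vii) (applied via the series-parallel structure) and the $h$-polynomial recursion $h(M)=h(M-e)+h(M/e)$, I would extract the relation $\delta_1(M)=\delta_1(N)+\varepsilon$ with $\varepsilon\in\{0,1\}$, while $\delta_1(M/e)=\delta_1(N)$. This yields $\delta_1(M/e)=\delta_1(N)\leq\delta_1(M)$.

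Cleaner still, and what I would actually write: use Theorem \ref{th45}. Since $M-e$ separable forces $e$ into a line $L$ of $G$ with $|L|\geq 2$, pick a nested ear decomposition $\Pi$ of $G$ with $L\subseteq\pi_1$ (possible since $\pi_1$ can be taken to be any cycle through $e$, and $L$ is a sub-line of that cycle). Contracting $L-e$ corresponds to passing to $G/(L-e)$ with induced ear decomposition $\tilde\Pi=\Pi/(L-e)$; by Corollary \ref{co33} the quantity $\delta_1$ drops by at most $1$, and by Theorem \ref{th45} it equals $p_2(G;\Pi)$ throughout, with $p_2(G/(L-e);\tilde\Pi)\leq p_2(G;\Pi)$ since contracting interior edges of $L$ can only turn a nest interval with $\ell(I)>1$ into one with $\ell(I)=1$ or leave things unchanged, never creating a new $\ell>1$ interval. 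Finally $G/e = (G/(L-e))/e$ removes $e$ from $\pi_1$; since after the first contraction $e$ sits in no $3$-circuit, Corollary \ref{lm33}(ii) (or directly $p_2$ of $(G/(L-e))/e \leq p_2(G/(L-e);\tilde\Pi)$) closes the argument: $\delta_1(M/e)=p_2((G/e);\cdot)\leq p_2(G;\Pi)=\delta_1(M)$. The main obstacle is the bookkeeping in the last inequality — verifying that deleting $e$ from the base cycle $\pi_1$ does not increase $p_2$; I expect this to follow from a short case analysis of how the nest intervals of $\Pi$ sit relative to $e$ in $\pi_1$, using the convention that the nest interval in $\pi_1$ is the shorter arc.
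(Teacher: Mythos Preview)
Your argument rests on the claim that ``$M-e$ separable means precisely that $e$ lies in a series class of size $\geq 2$'', equivalently that some endpoint of $e$ has degree~$2$ in $G$. This is false even for series-parallel networks. Take two triangles $vax$ and $vby$ sharing only the vertex $v$, and add the edge $e=xy$. The resulting graph $G$ is a $2$-connected series-parallel network (only three vertices $v,x,y$ have degree $\geq 3$, so there is no $K_4$-subdivision), and $G-e$ is the one-point union of two triangles, hence $M(G)-e$ is separable. Yet both endpoints of $e$ have degree $3$, so the series class of $e$ is $\{e\}$. In this example $e$ does lie in a $3$-circuit, so your case analysis lands in the second case, where you again invoke the same false claim to write $M=S(N,C_2)$. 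The ``cleaner'' version at the end likewise begins by picking a line $L\supseteq e$ with $|L|\geq 2$, which does not exist here.

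The paper's proof avoids this entirely: it takes an arbitrary ear decomposition $\Pi$ of $G$, notes that since $G/e$ is a block the induced sequence $\Pi/e$ is still a (nested) ear decomposition of $G/e$, and observes that contracting an edge can only shorten paths in each $\sigma(I)^+$, so $p_2(G/e;\Pi/e)\leq p_2(G;\Pi)$. Theorem~\ref{th45} then gives $\delta_1(M/e)\leq\delta_1(M)$ directly. No reduction through a series class is needed; the whole argument is the single inequality on $p_2$. Your final paragraph is heading in this direction but never gets there because you first try to contract $L-e$, and that preliminary step is exactly where the error lives. If you drop the series-class reduction and argue the $p_2$ inequality for $\Pi/e$ directly (checking the easy cases for how the ear containing $e$ and the adjacent nest intervals behave under contraction), you recover the paper's proof.
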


\begin{proof}
 Let $G$ be a series-parallel network with $M(G)=M$ and $\Pi$ an ear decomposition of $G$. By assumption, $G/e$ is also a series-parallel network. So it is easily seen that $\Pi/e$ is an ear decomposition of $G/e$. (For the notation $\Pi/e$, see the proof of Lemma \ref{lm47}(iii).) Since $p_2(G/e;\Pi/e)\leq p_2(G;\Pi)$, the corollary follows from Theorem \ref{th45}.
\end{proof}

In the remaining part of this section, we derive from Theorem \ref{th45} several bounds for the number $\delta_1(M)$ of a series-parallel network $M$. We first compare $\delta_1(M)$ with $h_1(M)$, the second entry of the $h$-vector of a broken circuit complex of $M$.

\begin{proposition}\label{pr411}
Let $M$ be a series-parallel network of rank $r\geq2$. Then
$\delta_1(M)\leq h_1(M)-1$, where $(h_0(M),h_1(M),\ldots,h_r(M))$ is the $h$-vector of $BC(M)$. If equality holds, then $M$ is parallel irreducible.
\end{proposition}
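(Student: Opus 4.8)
The plan is to prove the bound $\delta_1(M)\le h_1(M)-1$ by relating both quantities to a fixed nested ear decomposition $\Pi=(\pi_1,\ldots,\pi_n)$ of a series-parallel network $G$ with $M(G)=M$, using Theorem~\ref{th45}. By Lemma~\ref{lm25}(i) applied to the simplification $\overline{G}$ (and since $BC(M)$ and $BC(\overline{M})$ have the same $h$-vector), we have $h_1(M)=|E(\overline{G})|-r(\overline{G})$, which is the nullity of $\overline{G}$, i.e.\ the number of ears in any ear decomposition of $\overline{G}$. On the other hand, Theorem~\ref{th45} gives $\delta_1(M)=p_2(G;\Pi)$, the number of nest intervals $I\in N(\Pi)$ with $\ell(I)>1$. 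So the inequality to prove is $p_2(G;\Pi)\le h_1(M)-1$.

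First I would pass to a nested ear decomposition of the simplification: replacing $G$ by $\overline{G}$ does not change $\delta_1(M)$ (it is a matroid invariant of $BC(M)$) and can only help, since deleting parallel edges corresponds to deleting certain ears of length $1$, and a length-$1$ ear can never be the minimum length path forcing $\ell(I)>1$; more carefully, by Remark~\ref{rm43} I may assume every nest interval with $\ell(I)=1$ has $|I|=1$, and one checks that passing to $\overline{G}$ preserves $p_2$ and makes $n=h_1(M)$. So assume $G$ is simple with nullity $n=h_1(M)$, and $\Pi=(\pi_1,\ldots,\pi_n)$. Now the key combinatorial observation is an \emph{injection} from the set of nest intervals counted by $p_2(G;\Pi)$ into $\Pi-\{\pi_1\}$, which has $n-1$ elements. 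For each nest interval $I$ with $\ell(I)>1$, the set $\sigma^+(I)$ consists of paths all of length $>1$; in particular $\sigma(I)\ne\emptyset$ (since $\sigma(I)$ is always nonempty by definition of $N(\Pi)$), so I can pick one ear, say $\varphi(I)\in\sigma(I)$. Since the sets $\sigma(I)$, $I\in N(\Pi)$, partition $\Pi-\{\pi_1\}$, the map $I\mapsto\varphi(I)$ is injective, giving $p_2(G;\Pi)\le |N(\Pi)|\le n-1$. This already yields $\delta_1(M)\le h_1(M)-1$.

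For the equality case: if $\delta_1(M)=h_1(M)-1=n-1$, then tracing the inequalities, every ear $\pi_i$ with $i\ge 2$ lies in $\sigma(I)$ for some nest interval $I$ with $\ell(I)>1$, and moreover $p_1(G;\Pi)=0$ (there are no nest intervals with $\ell(I)=1$, since each such would be an element of $N(\Pi)$ not hit — actually I need to argue $|N(\Pi)|=p_2$, i.e.\ $p_1(G;\Pi)=0$). By Corollary~\ref{lm43} (equivalently Theorem~\ref{th42}), $p_1(G;\Pi)=0$ is exactly the condition that $M$ is parallel irreducible. So I would argue: $n-1=p_2(G;\Pi)\le |N(\Pi)|=p_1(G;\Pi)+p_2(G;\Pi)\le (n-1)+p_1(G;\Pi)$ forces $p_1(G;\Pi)=0$ together with $|N(\Pi)|=n-1$, hence $M$ is parallel irreducible by Corollary~\ref{lm43}.

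The main obstacle I anticipate is making the reduction to the simple case airtight: I must verify that passing from $G$ to $\overline{G}$ genuinely leaves $p_2$ unchanged (using Lemma~\ref{lm25}(vii) or Theorem~\ref{th45} with Lemma~\ref{lm22}(i) on parallel connections with $2$-cycles) and that the nullity of $\overline{G}$ equals $h_1(M)$ — the latter is Lemma~\ref{lm25}(i) for simple matroids, so this is fine, but I should state it cleanly. A secondary subtlety is the counting identity $|N(\Pi)|=p_1(G;\Pi)+p_2(G;\Pi)$, which is immediate from the definitions since every $I\in N(\Pi)$ has either $\ell(I)=1$ or $\ell(I)>1$, so this is routine. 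Everything else is the clean injection argument above plus an invocation of Corollary~\ref{lm43} for the equality case.
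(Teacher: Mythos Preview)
Your proposal is correct and follows essentially the same route as the paper: reduce to simple $M$ (so $h_1(M)$ equals the nullity $n$), invoke Theorem~\ref{th45} to get $\delta_1(M)=p_2(G;\Pi)$, observe $p_2(G;\Pi)\le |N(\Pi)|\le n-1$ (the paper calls the second inequality ``obvious'' where you spell out the injection via the partition $\{\sigma(I)\}$ of $\Pi-\pi_1$), and for equality deduce $p_1(G;\Pi)=0$, hence parallel irreducibility. The only simplification is that your anticipated obstacle about passing to $\overline{G}$ is a non-issue: since $\delta_1$ and $h_1$ depend only on the $h$-vector of $BC(M)$, and $BC(M)\cong BC(\overline{M})$, you may simply assume $M$ simple from the outset (as the paper does) without ever comparing $p_2(G;\Pi)$ to $p_2(\overline{G};\overline{\Pi})$.
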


\begin{proof}
By \cite[Proposition 7.4.1]{B}, we may assume that $M$ is simple. Let $G$ be a block with $M(G)=M$. Recall from Lemma \ref{lm25}(i) that $h_1(M)=n$, where $n$ is the nullity of $G$. Let $\Pi$ be an ear decomposition of $G$. Denote by $p$ the number of nest intervals appearing in $\Pi$. Then it is obvious that $p\leq n-1$. Now Theorem \ref{th45} yields
\[\delta_1(M)=p_2(G;\Pi)\leq p\leq n-1=h_1(M)-1.\]
If $\delta_1(M)=h_1(M)-1$, then $\delta_1(M)= p$, which by Corollary \ref{co410} means that $M$ is parallel irreducible.
\end{proof}

With the notation of the previous proposition, one has $\delta_1(M)=h_{r-2}(M)-h_1(M)$ if $r\geq 3$. Therefore, the following corollary follows immediately from this proposition, Lemma \ref{lm25}(i), and Theorem \ref{th45}.

\begin{corollary}\label{co413}
Keep the notation of Proposition \ref{pr411}. Assume $r\geq 3.$ Let $G$ be a series-parallel network with $M(G)=M$ and let $\overline{G}$ be the simplification of $G$. Then for any ear decomposition $\Pi$ of $G$, $h_{r-2}(M)=p_2(G;\Pi)+n(\overline{G})$, where $n(\overline{G})$ is the nullity of $\overline{G}$. In particular, $h_{r-2}(M)\leq 2h_1(M)-1$ with equality only if $M$ is parallel irreducible.
\end{corollary}

The proof of Theorem \ref{th45} suggests that one may relate $\delta_1(M(G))$ to the number of vertices of $G$ of degree at least 3. The next result realizes this idea.

\begin{proposition}\label{pr414}
Let $M$ be a series-parallel network and $G$ a block with $M(G)=M$. Denote by $\nu(G)$ the number of vertices of $G$ of degree at least 3. Then the following statements hold.
\begin{enumerate}
\item
$\delta_1(M)\leq 2\nu(G)-3$ when $\nu(G)>0$. If the equality holds, then $M$ is parallel irreducible.

\item
Suppose $M$ is parallel irreducible. Let $\mu(G)$ be the number of pairs of vertices of $G$ which are connected by a removable line. Then $\delta_1(M)\geq \max\{\mu(G),\nu(G)/2\}$.
\end{enumerate}
\end{proposition}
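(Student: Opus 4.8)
The plan is to reduce both inequalities to Theorem~\ref{th45}: for any nested ear decomposition $\Pi$ of $G$ we have $\delta_1(M)=p_2(G;\Pi)\le p$, where $p:=|N(\Pi)|$ is the total number of nest intervals, which is independent of $\Pi$ by Corollary~\ref{co410}. Write $W$ for the set of vertices of $G$ of degree $\ge 3$, so $|W|=\nu(G)$; since a block which is not a cycle has at least two vertices of degree $\ge 3$, we may assume $\nu(G)\ge 2$ throughout.

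For part (i) I would first record two elementary facts about an arbitrary nested ear decomposition $\Pi$. (a)~Every nest interval $I$ has both of its end vertices in $W$: if $I$ is the nest interval of $\pi_i$ in $\pi_j$ and $x$ is an end vertex of $I$, then $\pi_i$, $\pi_j$, and -- in the case that $x$ is an end vertex of $\pi_j$ -- an earlier ear through $x$ supplied by (ED2), are three distinct ears each contributing at least one edge at $x$, using that $\pi_1$ is a cycle (so all its vertices are internal) and (ED3). (b)~The map $I\mapsto\{x_I,y_I\}$ from nest intervals to their end-vertex pairs is injective: two distinct nest intervals with the same end vertices would have to lie on two distinct ears, and then by (ED3) the later of those ears could not contain both end vertices in the way nestedness demands -- a contradiction. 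Thus the nest intervals are the edges of a simple graph $J$ on the vertex set $W$, with $|E(J)|=p$. The decisive step is then to prove that $J$ has \emph{no $K_4$-minor}: a $K_4$-minor of $J$ is realized inside $G$ by four disjoint branch sets joined by six nest-interval paths, and since two nest intervals lying on a common ear are nested while nest intervals lying on different ears are edge-disjoint, I expect one can promote this configuration to an $M(K_4)$-minor of $M(G)$, contradicting Lemma~\ref{lm24}(ii). Granting this, a simple graph on $\nu\ge 2$ vertices with no $K_4$-minor has at most $2\nu-3$ edges, whence $\delta_1(M)\le p=|E(J)|\le 2\nu(G)-3$. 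If equality holds then $p_2=p$, i.e. every nest interval has $\ell(I)>1$, so $\delta_1(M)=p$ and $M$ is parallel irreducible by Corollary~\ref{co410}.

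For part (ii), since $M$ is parallel irreducible, Corollary~\ref{lm43} gives $p_1(G;\Pi)=0$, hence $\delta_1(M)=p_2(G;\Pi)=p=|E(J)|$ with $J$ as above. The bound $\delta_1(M)\ge\nu(G)/2$ follows because $J$ has no isolated vertex: any $w\in W$ has degree $\ge 3$, so it is an end vertex of some ear $\pi_i$ with $i\ge 2$, and by (N1) that ear is nested, so $w$ is an end vertex of its nest interval; a graph with no isolated vertex on $\nu$ vertices has at least $\nu/2$ edges. For $\delta_1(M)\ge\mu(G)$ I would show that each pair $\{u,v\}$ joined by a removable line $L$ of $G$ can be made an edge of $J$: since $G-L$ is a block (Proposition~\ref{pr34}), it has a cycle through $u$ and $v$, which we take as $\pi_1$ of a nested ear decomposition of $G-L$; appending $L$ as a final ear makes it nested in $\pi_1$, with nest interval an arc of $\pi_1$ between $u$ and $v$, so $\{u,v\}\in E(J)$. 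Since $|E(J)|$ does not depend on $\Pi$, it then remains to arrange a \emph{single} nested ear decomposition realizing all $\mu(G)$ pairs at once; alternatively one can induct on $\mu(G)$, deleting a removable line $L$ at a time and using Corollary~\ref{co411} (together with parallel irreducibility) to see that $\delta_1$ drops by exactly one while $\mu$ drops by at most one.

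The main difficulty, shared by both parts, is the assertion that $J$ is series-parallel -- i.e. that the laminar-within-ears / edge-disjoint-across-ears structure of nest intervals, forced by (N2) and (ED3), can never assemble into a $K_4$-pattern on $W$ without creating a forbidden $M(K_4)$-minor of $M(G)$ (or a violation of (N2)). A secondary point needing care is the last step of part (ii): either producing one ear decomposition exhibiting every removable-line pair as a nest interval, or controlling how $\mu$ and $\delta_1$ behave under deletion of a removable line.
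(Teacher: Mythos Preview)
Your approach via the auxiliary graph $J$ on $W$ is quite different from the paper's, which proves both parts by induction on $\nu(G)$: pick a lined nest interval $I$ (Lemma~\ref{lm46}), strip off $\sigma(I)$ to pass to $G'$, and track how $\delta_1$, $\nu$, and $\mu$ change using Corollary~\ref{lm33} and the computations from the proof of Theorem~\ref{th45}. Your argument for $\delta_1(M)\ge\nu(G)/2$ in (ii) is correct and neater than the paper's induction: once $p_1=0$ one has $\delta_1=p=|E(J)|$, and the observation that every $w\in W$ is an endpoint of some nest interval rules out isolated vertices in $J$.

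There are, however, two genuine gaps. For (i), everything rests on ``$J$ has no $K_4$-minor'', which you leave as an expectation. This is not routine: nest intervals on the same ear may be properly nested in one another, so $J$ is not in any evident way a minor of $G$, and the branch sets of a hypothetical $K_4$-minor of $J$ are subsets of $W$, not subgraphs of $G$ one can contract. Condition (N2) does rule out some local configurations---for instance, both diagonals of a $4$-cycle of pairwise disjoint nest intervals on $\pi_1$ cannot be placed simultaneously without violating laminarity---but promoting this to a global statement about all $K_4$-minors across many ears is exactly the hard part, and it is not clear it is any easier than the paper's induction.

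For $\delta_1\ge\mu(G)$ in (ii), the proposed induction does not work as written. Deleting a single removable line $L$ need not decrease $\delta_1$: if another ear shares the nest interval $I_L$ of $L$, then $I_L$ survives in the smaller decomposition with $\ell(I_L)>1$, so $p_2$ is unchanged. You would then need $\mu(G-L)\ge\mu(G)$ rather than $\ge\mu(G)-1$, and you have not checked that $G-L$ remains parallel irreducible. The paper instead removes all of $\sigma(I)$ at once for a lined $I$: then $\delta_1(M)=\delta_1(M(G'))+1$ holds exactly, and one verifies directly that every removable line of $G$ whose endpoint pair differs from that of $I$ stays removable in $G'$, giving $\mu(G')\ge\mu(G)-1$.
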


\begin{proof}
 (i) Obviously, one has $\nu(G)\geq2$ as soon as $\nu(G)>0$. If $\nu(G)=2$, then every ear decomposition of $G$ has only one nest interval (which is a line connecting the two vertices of degree at least 3). Thus by Corollary \ref{co410}, $\delta_1(M)\leq 1= 2\nu(G)-3$ with equality if and only if $M$ is parallel irreducible. Now suppose that $\nu(G)>2$. Let $\Pi$ be an ear decomposition of $G$. Then by Lemma \ref{lm46}, there exists a nest interval $I\in N(\Pi)$ which is lined in $G$. Let $e$ be an edge of $I$. By Remark \ref{rm43}, we may assume that $I=\{e\}$ when $\ell(I)=1$. Denote by $G'$ the subgraph of $G$ induced by the ears in $\Pi':=\Pi-\sigma(I)$ and let $\tilde{G}'=G'/(I-e)$. Since $\nu(G')\geq \nu(G)-2>0$, $G'$ is not a 2-cycle. Hence by Lemma \ref{lm47}(iii), $M(\tilde{G}')-e=M(G')-I$ is disconnected. Note that $M(G'/I)=M(\tilde{G}')/e$ and $\nu(G)-2\leq\nu(G'/I)\leq\nu(G)-1$. So from Corollary \ref{lm33} and the induction hypothesis it follows that
\begin{equation}\label{eq4}
 \delta_1(M(\tilde{G}'))\leq \delta_1(M(G'/I))+1\leq 2\nu(G'/I)-3+1\leq 2\nu(G)-4.
\end{equation}
Since $G'=\tilde{G}'$ when $\ell(I)=1$, we obtain from \eqref{eq4} and the proof of Theorem \ref{th45} that
\[\begin{aligned}
\delta_1(M)
&=
  \begin{cases}
   \delta_1(M(G')) & \text{ if }\ \ell(I)=1,\\
   \delta_1(M(\tilde{G}'))+1& \text{ if }\ \ell(I)>1
  \end{cases}\\
&\leq
  \begin{cases}
   2\nu(G)-4 & \text{ if }\ \ell(I)=1,\\
   2\nu(G)-3& \text{ if }\ \ell(I)>1.
  \end{cases}
 \end{aligned}
\]

Let us now examine the case $\delta_1(M)= 2\nu(G)-3$. Then the argument above shows that $\ell(I)>1$ and $\delta_1(M(G'/I))= 2\nu(G'/I)-3$. The latter equality together with the induction hypothesis implies that $M(G'/I)$ is parallel irreducible. Let $\Pi'/I$ be the ear decomposition of $G'/I$ induced by $\Pi'$ (see the proof of Lemma \ref{lm47}(iii)). Observe that $N(\Pi'/I)=N(\Pi')$. So $N(\Pi)=N(\Pi'/I)\cup\{I\}$ by Lemma \ref{lm47}(i). Now using Corollary \ref{lm43} we conclude that $M$ is parallel irreducible.

(ii) We also argue by induction on $\nu(G)$ as in (i). If $\nu(G)=0$, then $G$ is a cycle. So $\mu(G)=0$, and $\delta_1(M)=0$ by Lemma \ref{lm25}(v). Assume that $\nu(G)\geq2$. Let $\Pi$, $I$, $\Pi'$, and $G'$ have the same meaning as in (i). Then $\nu(G')\geq\nu(G)-2$ and $N(\Pi')\subset N(\Pi)$. Since $M$ is parallel irreducible, it follows from Corollary \ref{lm43} that $\ell(I)>1$ and $M(G')$ is parallel irreducible. So by Theorem \ref{th45} and the induction hypothesis,
\[\delta_1(M)=\delta_1(M(G'))+1\geq \nu(G')/2+1\geq \nu(G)/2.\]
To complete the proof, it suffices to show that $\mu(G')\geq \mu(G)-1$. Let $u,v$ be a pair of vertices of $G$, other than the pair consisting of the end vertices of $I$, which are connected by a removable line $L$ of $G$.  Clearly, $u,v$ have degree at least 3. It follows that $u,v$ are vertices of $G'$ and $L$ is contained in $G'-I$. So we only need to prove that $L$ is a removable line of $G'$. By \cite[Proposition 4.1.4]{O}, this will follow once it is shown that every pair of distinct edges of $G'-L$ are contained in a cycle of $G'-L$. Let $e_1,e_2$ be distinct edges of $G'-L$. Since $G-L$ is a block, there is a cycle $C$ of $G-L$ containing both $e_1$ and $e_2$. If $C\subseteq G'-L$, we are done. Otherwise, $C$ must contain an ear $\pi_i\in \sigma(I)$. In this case, $C':=(C-\pi_i)\cup I$ is a cycle of $G'-L$ which contains both $e_1$ and $e_2$.
\end{proof}

\begin{remark}
We keep the notation of  Proposition \ref{pr414}.

(i) The bounds given in Proposition \ref{pr414} are tight. For instance, the cycle matroid of the complete bipartite graph $K_{2,3}$ attains all these bounds.

 (ii) By taking iterated parallel connection of cycles, one may construct a series-parallel network $G$ with $\delta_1(M(G))=0$, but $\mu(G)$ and $\nu(G)$ are arbitrarily large. This shows that the assumption of the parallel irreducibility of the matroid $M$ in Proposition \ref{pr414}(ii) is essential.

(iii) The number $\nu(G)$ is not an invariant of the matroid $M$, but depends on the graph $G$. For example, the graphs $G_1$ and $G_2$ shown in Figure \ref{fig3} have isomorphic cycle matroids, but $\nu(G_1)=4\ne 3=\nu(G_2)$. So we actually proved in Proposition \ref{pr414} that
\[\max\{\nu(G)/2: M=M(G)\}\leq\delta_1(M)\leq \min\{2\nu(G)-3: M=M(G)\},\]
where the first inequality holds under the assumption that $M$ is parallel irreducible.

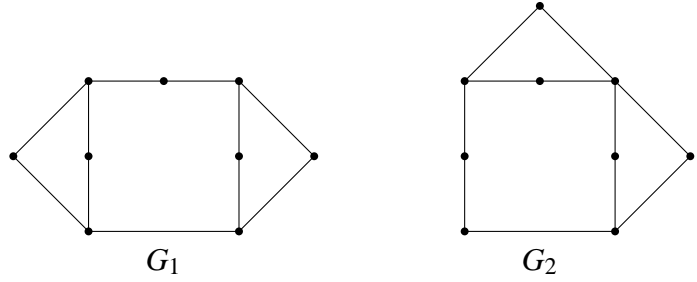
\begin{figure}[htb]
\begin{tikzpicture}
\draw (0,2)-- (0,0);
\draw  (0,0)-- (2,0);
\draw  (2,0)-- (2,2);
\draw (2,2)-- (0,2);
\draw (0,2)-- (-1,1);
\draw (-1,1)-- (0,0);
\draw (2,2)-- (3,1);
\draw (3,1)-- (2,0);
\draw (1,-0.4) node {$G_1$};
\draw (5,0)-- (7,0);
\draw  (7,0)-- (7,2);
\draw (7,2)-- (5,2);
\draw  (5,2)-- (5,0);
\draw (6,3)-- (5,2);
\draw (6,3)-- (7,2);
\draw (7,2)-- (8,1);
\draw (7,0)-- (8,1);
\draw (6,-0.4) node {$G_2$};
\begin{scriptsize}
\fill  (0,2) circle (1.5pt);
\fill  (0,1) circle (1.5pt);
\fill (0,0) circle (1.5pt);
\fill  (2,0) circle (1.5pt);
\fill  (2,2) circle (1.5pt);
\fill  (1,2) circle (1.5pt);
\fill  (2,1) circle (1.5pt);
\fill  (-1,1) circle (1.5pt);
\fill  (3,1) circle (1.5pt);
\fill  (5,0) circle (1.5pt);
\fill  (5,1) circle (1.5pt);
\fill  (7,0) circle (1.5pt);
\fill  (6,2) circle (1.5pt);
\fill (7,2) circle (1.5pt);
\fill  (7,1) circle (1.5pt);
\fill  (5,2) circle (1.5pt);
\fill (6,3) circle (1.5pt);
\fill (8,1) circle (1.5pt);
\end{scriptsize}
\end{tikzpicture}
\caption{$M(G_1)\cong M(G_2)$ but $\nu(G_1)\ne \nu(G_2)$.}\label{fig3}
\end{figure}

Meanwhile, $\mu(G)$ depends only on the matroid $M$, but not the graph $G$. Indeed, recall from Proposition \ref{pr34} that removable lines of $G$ are exactly removable series classes of $M$. Now on the set $\mathcal{R}$ of removable series classes of $M$ we define a relation $\sim$ as follows: $X_1\sim X_2$ if and only if either $X_1=X_2$ or $X_1\cup X_2$ is a circuit of $M$. Then it is easy to see that $\sim$ is an equivalence relation and $\mu(G)$ is equal to the cardinality of the quotient set $\mathcal{R}/\sim$. Thus, in particular, $\mu(G)$ is independent of $G$.
\end{remark}

\section{Applications}

Further applications of Theorem \ref{th45} will be derived in this section. Among them is an excluded minor characterization of the class $\mathcal{S}_0$. This result will then be used to examine outerplanar graphs and $A$-graphs.

We first give a characterization of the class $\mathcal{S}_1$. Recall that for $i\geq0$, $\mathcal{S}_i$ is the class of all loopless matroids $M$ with $\delta_0(M)=0$, $\delta_1(M)=i$.

\begin{proposition}\label{pr51}
 Let $M$ be a loopless matroid. If $M$ is connected, then the following conditions are equivalent:
\begin{enumerate}
 \item
 $M\in\mathcal{S}_1$;

\item
$M$ has a parallel irreducible decomposition: $M=P(N_1,\ldots,N_k)$, where $N_1$ is isomorphic to the cycle matroid of a subdivision of $K_{2,m}$ with $m\geq 3$, and $N_i$ is a non-loop circuit for $i=2,\ldots,k$.
\end{enumerate}
In general, $M\in\mathcal{S}_1$ if and only if $M$ can be decomposed as $M=M_1\oplus\cdots\oplus M_l$, where $M_1$ satisfies condition (ii) above, and $M_j$ is either a coloop or an iterated parallel connection of non-loop circuits for $j=2,\ldots,l$.
\end{proposition}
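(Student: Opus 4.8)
The plan is to prove the equivalence (i)$\Leftrightarrow$(ii) for connected $M$ first, and then deduce the general statement from Lemma \ref{lm25}(vii) together with Lemma \ref{lm26}. For the connected case, I would start from the fact (Lemma \ref{lm25}(vii), and the hypothesis $\delta_0(M)=0$) that $M\in\mathcal S_1$ forces $M$ to be a series-parallel network with $\delta_1(M)=1$, hence graphic; let $G$ be a series-parallel network with $M(G)=M$ and fix an ear decomposition $\Pi$ of $G$. By Theorem \ref{th45}, $\delta_1(M)=p_2(G;\Pi)=1$, so $\Pi$ has exactly one nest interval $I$ with $\ell(I)>1$; every other nest interval has length $1$. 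The strategy is then to ``peel off'' all the length-$1$ nest intervals, which by the proof of Theorem \ref{th45} (Case 1) correspond exactly to parallel connections with cycles at a baseedge, leaving a ``core'' subgraph $H$ with $M(H)$ parallel irreducible and $\delta_1(M(H))=1$, whose unique nest interval $I$ satisfies $\ell(I)>1$. I would argue that such an $H$ must be (a subdivision of) $K_{2,m}$: since $M(H)$ is parallel irreducible with exactly one nest interval, every ear in $\sigma(I)^+$ is a line (no ear is nested in another, else there would be a second nest interval), these lines pairwise meet only at the two endpoints $u,v$ of $I$ (by the nestedness condition (N2) and parallel irreducibility), and there are at least $3$ of them in $\sigma^+(I)$ because $\ell(I)>1$ means no member of $\sigma^+(I)$ has length $1$ and parallel irreducibility (Corollary \ref{lm43}) plus $h_1$ considerations force $|\sigma^+(I)|\ge 3$; a block consisting of $m\ge3$ internally disjoint $u$--$v$ paths is precisely a subdivision of $K_{2,m}$.

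Reassembling, $M=P(N_1,N_2,\dots,N_k)$ where $N_1=M(H)\cong M(\text{subdivision of }K_{2,m})$ and the $N_i$ for $i\ge2$ are the non-loop circuits split off in the peeling step; by Lemma \ref{lm22}(viii) this is a parallel irreducible decomposition (each $N_i$ with $i\ge2$ is parallel irreducible since a circuit is, and $N_1$ is parallel irreducible by construction), giving (ii). For the converse (ii)$\Rightarrow$(i): $M(\text{subdivision of }K_{2,m})$ has $\delta_1=1$ — this can be read off from an ear decomposition with a single nest interval of length $>1$ and $m-1$ ears nested in it, giving $p_2=1$ via Theorem \ref{th45} — and each non-loop circuit has $\delta_1=0$ (Lemma \ref{lm25}(v)); so by Lemma \ref{lm25}(vii), $\delta_1(M)=\delta_1(N_1)+\sum_{i\ge2}\delta_1(N_i)=1$, and $\delta_0(M)=0$ because $M$ is a parallel connection of series-parallel networks, hence itself a series-parallel network (being connected). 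Thus $M\in\mathcal S_1$.

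For the general (not necessarily connected) statement: if $M\in\mathcal S_1$ then $M$ is a direct sum of series-parallel networks $N_1\oplus\cdots\oplus N_l$ by Lemma \ref{lm25}(vii), and $\sum_j \delta_1(N_j)=\delta_1(M)=1$, so exactly one component, say $N_1=M_1$, has $\delta_1=1$ and the rest have $\delta_1=0$; by the connected case $M_1$ satisfies (ii), and by Lemma \ref{lm26} ((i)$\Leftrightarrow$(iv), applied componentwise to the $\delta_1=0$ components, each of which lies in $\mathcal S_0$) each $M_j$ with $j\ge2$ is a coloop or an iterated parallel connection of non-loop circuits. Conversely, given such a decomposition, $\delta_1(M)=\delta_1(M_1)+\sum_{j\ge2}\delta_1(M_j)=1+0=1$ by Lemma \ref{lm25}(vii) and Lemma \ref{lm26}, and $\delta_0(M)=0$ since every summand is (a direct sum of) series-parallel networks; hence $M\in\mathcal S_1$. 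The main obstacle I anticipate is the structural identification of the ``core'' $H$ as a subdivision of $K_{2,m}$ — specifically, verifying cleanly that parallel irreducibility together with exactly one nest interval forces all the relevant ears to be internally disjoint lines sharing only $u$ and $v$, and correctly handling the convention about nest intervals on the initial cycle $\pi_1$ (the ``shorter path'' convention) so that $\pi_1$ itself is accounted for among the $u$--$v$ paths.
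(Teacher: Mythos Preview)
Your approach is essentially the paper's: the paper isolates the structural identification of the parallel irreducible core as a separate Lemma~\ref{lm52} (whose proof is exactly your ear-decomposition analysis via Theorem~\ref{th45} and Corollary~\ref{co410}), and then combines it with Lemma~\ref{lm25}(vii) and Lemma~\ref{lm26} precisely as you do.

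One small correction in your counting, which you rightly flag as a worry: once $H$ is parallel irreducible with a single nest interval $I$, you have $\sigma(I)=\Pi_H-\{\pi_1\}$, so $|\sigma^+(I)|=n_H$, the nullity of $H$. From $\delta_1(M(H))=1$ and Proposition~\ref{pr411} you only get $n_H\ge 2$, not $|\sigma^+(I)|\ge 3$. The missing path is $\pi_1-I$: the $u$--$v$ paths making up $H$ are $I$, $\pi_1-I$, and the ears in $\sigma(I)$, for a total of $n_H+1\ge 3$. By the shorter-path convention $|\pi_1-I|\ge |I|\ge 2$, so all of them have length $\ge 2$ and $H$ is a subdivision of $K_{2,n_H+1}$. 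With that fix your argument goes through. (You could also shortcut the ``peeling'' step by invoking Lemma~\ref{lm22}(viii) directly to obtain the parallel irreducible decomposition, as the paper implicitly does; this avoids having to organize the order of peeling via Lemma~\ref{lm46}.)
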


The proof of this result is based on the following easy consequence of Theorem \ref{th45}.

\begin{lemma}\label{lm52}
 Let $M$ be a parallel irreducible matroid. Then $M\in \mathcal{S}_1$ if and only if $M$ is isomorphic to the cycle matroid of a subdivision of $K_{2,m}$ with $m\geq 3$.
\end{lemma}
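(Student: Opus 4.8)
\textbf{Proof proposal for Lemma \ref{lm52}.}

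The plan is to prove both directions by translating the problem, via Theorem \ref{th45}, into a statement about ear decompositions. Since $M$ is parallel irreducible (hence connected), Corollary \ref{lm43} tells us that any ear decomposition $\Pi$ of a graph $G$ realizing $M$ satisfies $p_1(G;\Pi)=0$; that is, \emph{every} nest interval $I$ has $\ell(I)>1$. Combined with Theorem \ref{th45}, the condition $M\in\mathcal{S}_1$ becomes: $p_2(G;\Pi)=|N(\Pi)|=1$, i.e. $G$ has exactly one nest interval, and its length (as well as the lengths of all ears nested in it) exceeds $1$. By Lemma \ref{lm25} (and the remark that broken circuit complexes are invariant under simplification), we may reduce to the case where $M$ is simple, so $G$ may be taken to be a simple graphical series-parallel network.

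For the direction ($\Leftarrow$), I would take $G$ to be a subdivision of $K_{2,m}$ with $m\geq3$, say with the two ``hub'' vertices $a,b$ joined by $m$ internally disjoint lines $L_1,\dots,L_m$, each of length $\geq2$ (after subdividing). One then builds the obvious ear decomposition: let $\pi_1=L_1\cup L_2$ be the first cycle, and for $i\geq2$ let $\pi_i$ be (a path along) $L_{i+1}$, whose two end vertices are $a,b\in\pi_1$. Each $\pi_i$ for $i\geq2$ is nested in $\pi_1$, and its nest interval is the subpath of $\pi_1$ between $a$ and $b$; by the convention on the cycle $\pi_1$, all these share the single nest interval $I$ (the shorter of the two arcs of $\pi_1$ between $a$ and $b$, or one fixed arc if they are equal). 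Thus $N(\Pi)=\{I\}$, and since every line $L_j$ has length $\geq2$, both $I$ and every ear in $\sigma(I)$ have length $>1$, giving $\ell(I)>1$ and $p_2(G;\Pi)=1$, $p_1(G;\Pi)=0$. Hence $\delta_1(M(G))=1$ and $\delta_0(M(G))=0$ (as $G$ is a block, $M(G)$ is connected, a series-parallel network, so in $\mathcal{S}$), i.e. $M\in\mathcal{S}_1$.

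For the direction ($\Rightarrow$), suppose $M\in\mathcal{S}_1$ is parallel irreducible and simple; let $G$ be a simple block with $M(G)=M$ and $\Pi$ a (necessarily nested) ear decomposition. By Theorem \ref{th45} and Corollary \ref{lm43}, $|N(\Pi)|=p_1+p_2=0+1=1$; write $N(\Pi)=\{I\}$, and let $u,v$ be the end vertices of $I$. Since $I$ is the only nest interval, condition (N1) forces every ear $\pi_i$ with $i\geq2$ to be nested in $\pi_1$ with nest interval $I$; moreover no ear can be nested in any $\pi_i$, $i\geq2$ (that would create a second nest interval). Consequently $G$ consists of $\pi_1$ together with paths $\pi_2,\dots,\pi_n$, each running between $u$ and $v$ and otherwise internally disjoint from everything, and $\pi_1$ is the union of two such $u$--$v$ paths (the arc $I$ and its complementary arc). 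So $G$ has exactly two vertices of degree $\geq3$, namely $u$ and $v$ (if $n\geq2$; note $n\geq2$ since $N(\Pi)\neq\emptyset$), joined by $n+1\geq3$ internally disjoint lines. That is precisely a subdivision of $K_{2,n+1}$ with $n+1\geq3$. Finally, $\ell(I)>1$ means each of these lines has length $\geq2$; but simplicity of $G$ already forces every line between $u$ and $v$ to have length $\geq2$ except we must rule out that one of them is a single edge — and indeed if some line were a single edge $e=uv$ then, taking $\pi_1$ to contain $I$, one could interchange $e$ with $I$ as in Remark \ref{rm43} to obtain a nest interval of length $1$, contradicting parallel irreducibility via Corollary \ref{lm43}. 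Hence $G$ is a subdivision of $K_{2,m}$, $m=n+1\geq3$, and $M\cong M(G)$ as required.

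I expect the main obstacle to be the bookkeeping in the ($\Rightarrow$) direction: carefully arguing from ``$N(\Pi)$ is a singleton'' that the graph can only have the claimed shape — in particular ruling out additional vertices of degree $\geq3$ on the ears and handling the degenerate small cases ($n=1$, or an ear of length $1$) — and making the interchange argument of Remark \ref{rm43} precise so that simplicity plus parallel irreducibility genuinely forces every line to have length $\geq2$. The ($\Leftarrow$) direction is essentially a direct construction and verification and should be routine.
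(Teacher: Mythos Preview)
Your proposal is correct and follows essentially the same route as the paper: both directions are reduced, via Theorem~\ref{th45} together with Corollary~\ref{lm43} (the paper cites the equivalent Corollary~\ref{co410}), to the statement that a nested ear decomposition of $G$ has a unique nest interval $I$ with $\ell(I)>1$, and then one reads off that $G$ is built from $n+1$ internally disjoint $u$--$v$ paths of length $\geq 2$.

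Two minor simplifications are available. First, the reduction to simple $M$ is unnecessary: a non-trivial parallel irreducible matroid is automatically simple (a $2$-circuit would exhibit $M$ as a parallel connection), and in any case the paper works directly with an arbitrary block $G$. Second, your closing argument invoking Remark~\ref{rm43} to rule out a line of length $1$ is more work than needed. The condition $\ell(I)>1$ already says that $I$ and every ear in $\sigma(I)$ have length $\geq 2$, and the paper's convention on nest intervals in $\pi_1$ forces $|\pi_1 - I|\geq |I|>1$; so all $n+1$ paths have length $\geq 2$ immediately, without appealing to simplicity or the interchange trick.
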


\begin{proof}
 First, assume that $M\in \mathcal{S}_1$. Let $G$ be a block with $M=M(G)$ and $\Pi=(\pi_1,\ldots,\pi_n)$ an ear decomposition of $G$. Evidently, $n\geq 2$. By Theorem \ref{th45} and Corollary \ref{co410}, $\Pi$ has a unique nest interval $I$ and $\ell(I)>1$. It follows that $I$ lies on the cycle $\pi_1$ and all other ears connect the two end vertices of $I$. Recall our convention that the length of $I$ does not exceed that of the path $\pi_1-I$. Thus $G$ consists of $n+1$ paths, each of which connects the two end vertices of $I$ and has length at least 2. Therefore, $G$ is a subdivision of $K_{2,n+1}$. Conversely, if $G$ is a subdivision of $K_{2,m}$ with $m\geq 3$, then $G$ has a nested ear decomposition with a unique nest interval $I$. Of course, $\ell(I)>1$. So $G$ is a series-parallel network and $\delta_1(M(G))=1$ by Theorem \ref{th45}. In other words, $M(G)\in \mathcal{S}_1$.
\end{proof}

\begin{proof}[Proof of Proposition \ref{pr51}]
The proof is a straightforward combination of Lemma \ref{lm25}(vii), Lemma \ref{lm26}, and Lemma \ref{lm52}.
\end{proof}

\begin{example}
 Consider the graph $G$ shown in Figure \ref{fig2}. We know from Example \ref{ex49} that $M(G)\in \mathcal{S}_1$. A parallel irreducible decomposition of $G$ is $G=P(G_1,G_2,G_3,G_4)$, where $G_1$, $G_2$, $G_3$, $G_4$ are subgraphs of $G$ induced by the edge sets $\{7,8,9,10,11,12\}$, $\{4,5,7\}$, $\{1,2,3,7\}$, $\{3,6\}$, respectively. We have $G_1\cong K_{2,3}$ and $G_i$ is a cycle for $i=2,3,4$.
\end{example}

Next, we characterize the class $\mathcal{S}_{1^+}=\mathcal{S}-\mathcal{S}_0$.

\begin{proposition}\label{pr54}
 Let $M\in\mathcal{S}$. Then the following conditions are equivalent:
\begin{enumerate}
 \item
 $M\in\mathcal{S}_{1^+}$;

\item
$M$ has a parallel minor isomorphic to $M(K_{2,m})$ for some $m\geq3$.
\end{enumerate}

\noindent If $M$ is connected, then each of the above conditions is equivalent to the following one:
\begin{enumerate}
\item[\rm(iii)]
$M$ has a series-parallel minor isomorphic to $M(K_{2,m})$ for some $m\geq3$.
\end{enumerate}
\end{proposition}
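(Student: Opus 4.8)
Throughout write $\mathcal{S}_{1^+}=\{M\in\mathcal{S}:\delta_1(M)\geq1\}$. The implication (iii)$\Rightarrow$(ii) is immediate, since a series contraction is a contraction and so every series-parallel minor is in particular a parallel minor. For (ii)$\Rightarrow$(i) I would argue by contraposition. Suppose $M\in\mathcal{S}_0$. By Lemma \ref{lm26}, each connected component of $M$ is a coloop or an iterated parallel connection of non-loop circuits. Using Lemma \ref{lm22}(iii) — contracting the basepoint of a parallel connection splits it as a direct sum, while contracting any other element merely replaces the relevant circuit $C_n$ by $C_{n-1}$ — one checks by a routine induction that the class of matroids whose loopless connected components all have this form is closed under contraction and parallel deletion (loops produced en route are isolated components, which may be contracted away). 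Hence every parallel minor of $M$ again lies in $\mathcal{S}_0$; since $\delta_1(M(K_{2,m}))=1$ for $m\geq3$ by Theorem \ref{th45}, $M(K_{2,m})\notin\mathcal{S}_0$, so it cannot be a parallel minor of $M$.

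It remains to establish (i)$\Rightarrow$(ii) in general and (i)$\Rightarrow$(iii) when $M$ is connected. I first reduce to the connected case: writing $M=M_1\oplus\cdots\oplus M_l$ with each $M_j$ a series-parallel network, Lemma \ref{lm25}(vii) gives $\delta_1(M)=\sum_j\delta_1(M_j)$, so if $M\in\mathcal{S}_{1^+}$ then $\delta_1(M_j)\geq1$ for some $j$, and $M_j$ is a parallel minor of $M$ (contract, one by one, all elements of the other components). So it suffices to prove the following claim: \emph{a connected series-parallel network $M$ with $\delta_1(M)\geq1$ has a series-parallel minor isomorphic to $M(K_{2,m})$ for some $m\geq3$} — this gives (iii) in the connected case, and, via (iii)$\Rightarrow$(ii) and the reduction just made, gives (ii) in general. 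I would prove the claim by induction on $|E(M)|$. If $\delta_1(M)=1$, then $M\in\mathcal{S}_1$, and Proposition \ref{pr51} yields $M=P(N_1,\ldots,N_k)$ with $N_1\cong M(H)$ for a subdivision $H$ of some $K_{2,m}$, $m\geq3$, and each $N_i$, $i\geq2$, a non-loop circuit. Each such $N_i=C_{n_i}$ can be pruned away by series-contracting $n_i-2$ of its non-basepoint elements (these being a non-trivial series class of the current matroid) and then parallel-deleting the last one, which by then is parallel to the basepoint; this leaves $N_1$ as a series-parallel minor of $M$, and $N_1$ is turned into $M(K_{2,m})$ by series-contracting the subdivided edges of $H$. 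If $\delta_1(M)\geq2$, choose a graph $G$ with $M(G)=M$, a nested ear decomposition $\Pi$, and, by Lemma \ref{lm46}, a nest interval $I$ lined in $G$. If $\ell(I)=1$, then after replacing $I$ by a single edge (Remark \ref{rm43}) we have $M=P(M(G'),D_1,\ldots,D_s)$ with $G'=G-\sigma(I)$ and the $D_t$ cycles; pruning away the cycle-components as above exhibits $M(G')$ as a series-parallel minor of $M$, and $M(G')$ is connected with $\delta_1(M(G'))=\delta_1(M)\geq2$ (Lemma \ref{lm25}(vii)) and fewer edges, so we invoke the induction hypothesis. If $\ell(I)>1$, then $|I|\geq2$, so $I$ is a non-trivial series class, and by Lemma \ref{lm47}(ii) it is removable; picking $e\in I$, the matroid $M/(I\setminus e)$ is obtained by series contractions, is a connected series-parallel network with fewer edges, and by Corollary \ref{co33} — applicable because $I$ is removable and, as $\ell(I)>1$, no single element $f$ makes $I\cup\{f\}$ a circuit — satisfies $\delta_1(M/(I\setminus e))=\delta_1(M)-1\geq1$; again apply the induction hypothesis.

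The step that I expect to need the most care is the honest verification that the operations used to prune $M$ down — removing a circuit-component of a parallel connection, and un-subdividing — are genuinely series contractions and parallel deletions rather than arbitrary deletions. This amounts to exhibiting the relevant edge sets (the non-basepoint edges of each circuit $C_{n_i}$, and the subdivided edges of $H$) as non-trivial series classes and then using Lemma \ref{lm22}(iii) to commute these operations past the rest of the parallel connection; the closure of $\mathcal{S}_0$ under parallel minors needed for (ii)$\Rightarrow$(i) is of a similar, bookkeeping-heavy but routine, character. Beyond that, the argument is a direct assembly of Lemmas \ref{lm22}, \ref{lm25}, \ref{lm46}, \ref{lm47}, Corollary \ref{co33}, Theorem \ref{th45}, and Proposition \ref{pr51}.
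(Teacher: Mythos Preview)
Your proof is correct and follows essentially the same route as the paper: the contrapositive argument for (ii)$\Rightarrow$(i) via Lemma~\ref{lm26} and Lemma~\ref{lm22}(iii), and the descent to an $M(K_{2,m})$ series-parallel minor via lined nest intervals, Corollary~\ref{co33}, and Proposition~\ref{pr51}. The only organisational difference is that the paper packages the descent step as a separate lemma (producing a chain $M=M_k,\ldots,M_0$ of series-parallel minors with $M_i\in\mathcal{S}_i$), whereas you inline this induction directly into the proof.
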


To prove this proposition, we will make use of the following lemma.

\begin{lemma}\label{lm55}
 Let $M\in\mathcal{S}_k$ with $k\geq0$. Suppose that $M$ is connected. Then there exists a sequence of matroids $M=M_k,M_{k-1},\ldots,M_0$ such that $M_i\in \mathcal{S}_i$ and $M_i$ is a series-parallel minor of $M_{i+1}$ for $i=0,\ldots,k-1$.
 \end{lemma}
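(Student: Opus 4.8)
\textbf{Proof plan for Lemma \ref{lm55}.}

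The plan is to argue by descending induction, peeling off one unit of $\delta_1$ at a time while staying inside $\mathcal{S}$ and moving only through series-parallel minors. So the core task is: given a connected $M\in\mathcal{S}_k$ with $k\geq 1$, produce a series-parallel minor $M'$ of $M$ with $M'\in\mathcal{S}_{k-1}$; iterating this gives the chain $M=M_k\to M_{k-1}\to\cdots\to M_0$. (The base case $k=0$ is vacuous, and we may even assume $M$ is a series-parallel network, since $M\in\mathcal{S}$ connected means $M$ is a series-parallel network.)

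First I would pick a block $G$ with $M(G)=M$ and a nested ear decomposition $\Pi=(\pi_1,\ldots,\pi_n)$ of $G$. By Theorem \ref{th45}, $\delta_1(M)=p_2(G;\Pi)=k\geq 1$, so there is at least one nest interval $I\in N(\Pi)$ with $\ell(I)>1$; among all such, choose one that is lined in $G$ (this exists by applying Lemma \ref{lm46}-type reasoning to a minimal nest interval with $\ell>1$, or more simply by first reducing to a lined nest interval via Lemma \ref{lm46} and checking that the $\ell(I)>1$ property can be preserved — the cleanest route is to take $I$ a $\subseteq$-minimal element of $\{J\in N(\Pi):\ell(J)>1\}$, which is automatically a line of $G$, and if its ears are not all lines, descend into the sub-decomposition as in Lemma \ref{lm46}, noting that $\ell$ can only stay $>1$ or there would be a shorter path already giving $\ell(I)=1$). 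Now the idea is to \emph{shorten} $I$: since $\ell(I)>1$, replace $I$ by a path of length one. Concretely, let $e$ be an edge of $I$ and contract $I-e$, i.e. pass to $\tilde G=G/(I-e)$; since $I$ is a line this is a series contraction applied $|I|-1$ times, so $M(\tilde G)$ is a series minor of $M$, in particular a series-parallel minor. By the proof of Theorem \ref{th45} (Case 2, equation \eqref{eq5} and the sentence following it), $\delta_1(M(\tilde G))=\delta_1(M)-1=k-1$, and $\tilde G$ is again a series-parallel network, hence $M(\tilde G)\in\mathcal{S}_{k-1}$. Setting $M_{k-1}=M(\tilde G)$, which is connected (a block has connected cycle matroid), and repeating, I obtain the full descending chain.

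The main obstacle I anticipate is the bookkeeping needed to guarantee at each stage that a nest interval with $\ell(I)>1$ exists \emph{and} is lined (or can be made lined without destroying $\ell(I)>1$), so that the contraction described above is a genuine series contraction and the $\delta_1$-drop is exactly $1$ rather than $0$. This is really just a careful combination of Lemma \ref{lm46}, Lemma \ref{lm47}, and Corollary \ref{co33}/Corollary \ref{lm33}, but one must check that contracting $I-e$ does not accidentally create a chord turning $I\cup\{f\}$ into a circuit (ruled out precisely because $\ell(I)>1$ forces no such $f$ before contraction, and contraction of internal edges of the line cannot create one). Everything else — that series contractions preserve membership in $\mathcal{S}$ by Lemma \ref{lm24}(ii), and that they are instances of the "series-parallel minor" relation — is routine.
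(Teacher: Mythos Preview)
Your overall plan is sound, but the step you yourself flag as ``the main obstacle'' is a genuine gap, and your proposed fix is incorrect. The claim that a $\subseteq$-minimal element $I$ of $\{J\in N(\Pi):\ell(J)>1\}$ is automatically a line of $G$ is false. Take $G=P(K_{2,3},C_3)$, where $K_{2,3}$ has degree-$3$ vertices $u,v$ and degree-$2$ vertices $a,b,c$, and $C_3$ is the $3$-cycle on $u,d,a$ glued along the edge $ua$. With the ear decomposition $\pi_1=uavbu$, $\pi_2=ucv$, $\pi_3=uda$, the only nest interval with $\ell>1$ is $I_1=uav$; but its internal vertex $a$ has degree $3$ in $G$, so $I_1$ is not a line and hence not lined. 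The unique lined nest interval for this $\Pi$ is $I_2=\{ua\}$, which has $\ell(I_2)=1$. Thus your construction cannot start with this $\Pi$, and you have given no argument that some other ear decomposition always supplies a lined nest interval with $\ell>1$. (Your ``descend into the sub-decomposition'' suggestion does not help either: the problem is that $I$ itself fails to be a line, not that an ear in $\sigma(I)$ does.)

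The paper sidesteps this by inducting on the nullity $n(G)$ rather than on $k$, and by allowing parallel deletions in addition to series contractions. One takes \emph{any} lined nest interval $I$ (Lemma~\ref{lm46}), passes to $\tilde G=G/(I-e)$ by series contractions, and then to $\tilde G'$ by writing $\tilde G=P(\tilde G',D_1,\ldots,D_s)$ with the $D_t$ cycles and stripping each $D_t$ off via series contractions followed by a parallel deletion. Then $n(\tilde G')<n(G)$ and, by Theorem~\ref{th45}, $\delta_1(M(\tilde G'))\in\{k-1,k\}$; in either case the induction hypothesis on $\tilde G'$ furnishes the remainder of the chain. So the paper never needs $\ell(I)>1$, at the cost of using both kinds of series-parallel minor operations rather than series contractions alone.
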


\begin{proof}
 Let $G$ be a series-parallel network with $M=M(G)$. We argue by induction on the nullity $n(G)$ of $G$. If $n(G)=0$ or $n(G)=1$, then $M\in\mathcal{S}_0$ and we have nothing to prove. Assume $n(G)>1$. Let $\Pi$ be a nested ear decomposition of $G$. By Lemma \ref{lm46}, we may find a nest interval $I$ which is lined in $G$. Let $\Pi'=\Pi-\sigma(I)$ and let $G'$ be the subgraph of $G$ induced by the ears in $\Pi'$. Denote by $\tilde{G}$ and $\tilde{G}'$ the contractions of $G$ and $G'$ by all but one edge of $I$, respectively. Then $G$ is a subdivision of $\tilde{G}$, and $\tilde{G}$ is a parallel connection of $\tilde{G}'$ with cycles (see the proofs of Theorems \ref{th42}, \ref{th45}). This implies that $M(\tilde{G}')$ is a series-parallel minor of $M$. Since $n(\tilde{G}')<n(G)$ and $\delta_1(M)-1\leq \delta_1(M(\tilde{G}'))\leq \delta_1(M)$ (by Theorem \ref{th45}), the lemma now follows easily by the induction hypothesis.
\end{proof}

\begin{proof}[Proof of Proposition \ref{pr54}]
 Observe that $M$ satisfies condition (i) (respectively, (ii)) if and only if there is a connected component of $M$ satisfying the same condition. So we may assume that $M$ is connected.

(i)$\Rightarrow$(iii): By Lemma \ref{lm55}, $M$ has a series-parallel minor $M_1\in \mathcal{S}_1$. As $M$ is connected, $M_1$ is also connected by Lemma \ref{lm22}(ii). Now it follows easily from Proposition \ref{pr51} that $M_1$ has a series-parallel minor $N$ isomorphic to $M(K_{2,m})$ for some $m\geq3$. Evidently, $N$ is also a series-parallel minor of $M$.

(iii)$\Rightarrow$(ii): This is obvious.

(ii)$\Rightarrow$(i): Suppose $M$ has a parallel minor isomorphic to $M(K_{2,m})$ for some $m\geq3$. If $M\not\in\mathcal{S}_{1^+}$, then $M\in\mathcal{S}_0$. We will show that every loopless parallel minor of $M$ also belongs to $\mathcal{S}_0$. If $N$ is a parallel deletion of $M$, then by Lemma \ref{lm25}(ii), (v), the $h$-polynomials of $BC(M)$ and $BC(N)$ coincide. So $N\in\mathcal{S}_0$. Now consider the contraction of $M$ by an element $e$. By Lemma \ref{lm26}, $M$ is an iterated parallel connection of non-loop circuits (we may obviously exclude the case $M$ is a coloop). It then follows from Lemma \ref{lm22}(iii) that every connected component of $M/e$ is either a loop or an iterated parallel connection of non-loop circuits. Thus we can conclude that every loopless parallel minor of $M$ is in $\mathcal{S}_0$. This contradiction completes the proof.
\end{proof}

\begin{remark}
 Using Corollary \ref{co412} one can show that if $M\in \mathcal{S}$ and $N$ is a loopless parallel minor of $M$, then $N\in \mathcal{S}$ and $\delta_1(N)\leq \delta_1(M)$. This gives an alternative proof of the implication (ii)$\Rightarrow$(i) in Proposition \ref{pr54}.
\end{remark}

We are now in a position to give an excluded minor characterization of the class $\mathcal{S}_0$.

\begin{theorem}\label{th57}
 Let $M$ be a loopless matroid. Then $M\in \mathcal{S}_0$ if and only if $M$ has no minor isomorphic to $U_{2,4}$ or $M(K_4)$ and no parallel minor isomorphic to $M(K_{2,m})$ for all $m\geq3$.
\end{theorem}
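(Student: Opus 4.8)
The plan is to obtain Theorem~\ref{th57} essentially for free by combining Proposition~\ref{pr54} with the excluded-minor description of the class $\mathcal{S}$ coming from Lemma~\ref{lm24}. Since by definition $\mathcal{S}_0=\mathcal{S}\setminus\mathcal{S}_{1^+}$, and Proposition~\ref{pr54} pins down exactly which members of $\mathcal{S}$ lie in $\mathcal{S}_{1^+}$, the only issue that needs a little extra attention is that Lemma~\ref{lm24}(ii) is stated for \emph{connected} matroids, whereas here $M$ is an arbitrary loopless matroid.

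So first I would establish the auxiliary claim: a loopless matroid $M$ lies in $\mathcal{S}$ if and only if it has no minor isomorphic to $U_{2,4}$ or to $M(K_4)$. For this, write $M=N_1\oplus\cdots\oplus N_k$ as the direct sum of its connected components; each $N_i$ is loopless and connected. By Lemma~\ref{lm25}(vii), $M\in\mathcal{S}$ exactly when every $N_i$ is a series-parallel network, and by the equivalence (a)$\Leftrightarrow$(d) of Lemma~\ref{lm24}(ii) this holds exactly when no $N_i$ has a minor isomorphic to $U_{2,4}$ or $M(K_4)$. Finally, every minor of $M$ is a direct sum of minors of the $N_i$, so any connected minor of $M$ — in particular one isomorphic to $U_{2,4}$ or $M(K_4)$, both of which are connected — is isomorphic to a minor of a single $N_i$; thus $M$ has such a minor precisely when some $N_i$ does, which settles the claim.

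With the claim available the theorem closes immediately. If $M\in\mathcal{S}_0$, then $M\in\mathcal{S}$, so $M$ has no minor isomorphic to $U_{2,4}$ or $M(K_4)$; and $M\notin\mathcal{S}_{1^+}$, so by the equivalence (i)$\Leftrightarrow$(ii) of Proposition~\ref{pr54}, $M$ has no parallel minor isomorphic to $M(K_{2,m})$ for any $m\geq 3$. Conversely, if $M$ has no minor isomorphic to $U_{2,4}$ or $M(K_4)$, then $M\in\mathcal{S}$ by the claim; and if in addition $M$ has no parallel minor isomorphic to $M(K_{2,m})$ for any $m\geq 3$, then Proposition~\ref{pr54} forces $M\notin\mathcal{S}_{1^+}$, that is, $M\in\mathcal{S}_0$.

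I do not expect a genuine obstacle: everything of substance is already contained in Proposition~\ref{pr54} (hence, ultimately, in the formula of Theorem~\ref{th45}) and in Lemma~\ref{lm24}, and the only real bookkeeping is the passage to connected components above. It is perhaps worth remarking, to see that the hypotheses are not redundant, that $M(K_{2,m})$ itself belongs to $\mathcal{S}$ — indeed to $\mathcal{S}_1$, by Lemma~\ref{lm52} — so it is a series-parallel network with no $U_{2,4}$ or $M(K_4)$ minor, and the parallel-minor condition genuinely rules out something new.
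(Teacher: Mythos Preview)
Your proposal is correct and follows essentially the same route as the paper: reduce the excluded-minor description of $\mathcal{S}$ to the connected case so that Lemma~\ref{lm24}(ii) applies, and then use Proposition~\ref{pr54} to single out $\mathcal{S}_0$ inside $\mathcal{S}$. The paper's proof is just a two-line version of yours, noting that since $U_{2,4}$, $M(K_4)$, and $M(K_{2,m})$ are all connected one may assume $M$ is connected and then invoke Lemma~\ref{lm24}(ii) and Proposition~\ref{pr54} directly.
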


\begin{proof}
 Since $U_{2,4}$, $M(K_4)$, $M(K_{2,m})$ are all connected, we may assume that $M$ is connected. The theorem now follows by combining Lemma \ref{lm24}(ii) and Proposition \ref{pr54}.
\end{proof}

The next result is a graph-theoretic version of the previous theorem. For a connected graph $G$, by abuse of notation, we also write $G\in \mathcal{S}_0$ (respectively, $G\in \mathcal{S}_{1}$, etc.) whenever $M(G)\in \mathcal{S}_0$ (respectively, $M(G)\in \mathcal{S}_{1}$, etc.). Thus $G\in \mathcal{S}_0$ if and only if $G$ is loopless and each block of $G$ is either an edge or an iterated parallel connection of non-loop cycles, by Lemmas \ref{lm22}(v) and \ref{lm26}. (Here, by a \emph{block of $G$} we mean a subgraph of $G$ that corresponds to a connected component of $M(G)$.) Recall that a \emph{vertex-induced subgraph} of $G$ is a graph obtained from $G$ by deleting a subset of the vertex set of $G$ together with all the edges incident to that vertex subset.

\begin{theorem}\label{th58}
 Let $G$ be a loopless connected graph with at least one edge. Denote by $\overline{G}$ the simplification of $G$. Then $G\in \mathcal{S}_0$ if  and only if $G$ has no subgraph that is a subdivision of $K_4$ and $\overline{G}$ has no vertex-induced subgraph that is a subdivision of $K_{2,3}$.
\end{theorem}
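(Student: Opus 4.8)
The plan is to reduce the whole statement to an assertion about a single block and then to prove that assertion in both directions, the hard one by induction on the nullity.

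First I would make two reductions. Since $\overline{M(G)}=M(\overline G)$ and the broken circuit complexes of $M(G)$ and $M(\overline G)$ have the same $h$-vector, and since a subdivision of $K_4$ is simple, we may assume $G=\overline G$ is simple (the hypothesis about vertex-induced subgraphs is already phrased in terms of $\overline G$). By Lemma~\ref{lm24}(i) applied to each block (a subdivision of $K_4$ is $2$-connected, hence lies inside a single block), ``$G$ has no subgraph that is a subdivision of $K_4$'' is equivalent to ``every block of $G$ is a series-parallel network'', which by Lemma~\ref{lm25}(vii) is equivalent to $M(G)\in\mathcal S$; under this hypothesis $M(G)\in\mathcal S_0$ iff $\delta_1(M(B))=0$ for every block $B$ of $G$, again by Lemma~\ref{lm25}(vii). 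Finally, a subdivision $H$ of $K_{2,3}$ sitting in $G$ is $2$-connected, so it lies in a single block $B$, and $H$ is vertex-induced in $G$ iff it is vertex-induced in $B$. Hence Theorem~\ref{th58} follows once we prove the key claim: \emph{for a simple series-parallel network $B$ (a $2$-connected block), $\delta_1(M(B))\ge 1$ if and only if $B$ has a vertex-induced subgraph that is a subdivision of $K_{2,3}$.}

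For the ``if'' direction of the key claim, given such a subgraph $H$ with branch vertices $u,v$ and internally disjoint $u$--$v$ paths $P_1,P_2,P_3$ of length $\ge 2$, I would start an ear decomposition with $\pi_1=P_1\cup P_2$ (a cycle) and $\pi_2=P_3$, then extend it to an ear decomposition $\Pi$ of $B$ (possible because $B$ is $2$-connected and $H$ is a $2$-connected subgraph; the extension is automatically nested since $B$ is a series-parallel network). The nest interval $I$ of $\pi_2$ in $\pi_1$ is the shorter of $P_1,P_2$, so $|I|\ge 2$; moreover $I$ is an induced path of $H$, hence of $B$ (as $H$ is vertex-induced in $B$), so no single edge of $B$ joins the two end vertices of $I$. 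Thus no ear in $\sigma(I)$ has length $1$, so $\ell(I)\ge 2$, giving $p_2(B;\Pi)\ge 1$ and, by Theorem~\ref{th45}, $\delta_1(M(B))\ge 1$. For the ``only if'' direction I would induct on the nullity of $B$. If $\delta_1(M(B))=1$, then $M(B)\in\mathcal S_1$ is connected, so Proposition~\ref{pr51}(ii) together with Lemma~\ref{lm22}(v) (and the graphical realization used in the proof of Lemma~\ref{lm52}) shows that $B$ is obtained from a subdivision $B_1$ of $K_{2,m}$, $m\ge 3$, by parallel-connecting cycles at baseedges; such a parallel connection only adjoins new vertices together with edges incident to them, so $B_1$ is a vertex-induced subgraph of $B$, and the two branch vertices of $B_1$ together with three of its $m$ paths form a vertex-induced subdivision of $K_{2,3}$. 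If $\delta_1(M(B))\ge 2$, I would choose an ear decomposition $\Pi$ and, by Lemma~\ref{lm46}, a lined nest interval $I\in N(\Pi)$, and set $G'=B-\sigma(I)$ with ear decomposition $\Pi'=\Pi-\sigma(I)$ (Lemma~\ref{lm47}). The proof of Theorem~\ref{th45} gives $\delta_1(M(G'))=\delta_1(M(B))$ if $\ell(I)=1$ (using Remark~\ref{rm43}) and $\delta_1(M(G'))=\delta_1(M(B))-1$ if $\ell(I)>1$; either way $G'$ is a simple series-parallel network block with $\delta_1(M(G'))\ge 1$ and strictly smaller nullity. Crucially, $G'$ is a vertex-induced subgraph of $B$: because $I$ is lined, every ear in $\sigma(I)$ is a line of $B$ of length $\ge 2$ (when $\ell(I)=1$ and $|I|=1$ this uses that $B$ is simple, so no ear in $\sigma(I)$ can be parallel to the edge $I$), hence every edge of $B$ outside $G'$ is incident to an internal vertex of such an ear, i.e.\ to a vertex of $V(B)\setminus V(G')$. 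By the inductive hypothesis $G'$ contains a vertex-induced subdivision of $K_{2,3}$, which is then vertex-induced in $B$.

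Combining the reductions with the two directions of the key claim proves the theorem. The main obstacle is keeping the ``vertex-induced'' property alive through the inductive reductions: this is precisely why one must peel off along a \emph{lined} nest interval --- so that every deleted edge is incident to a deleted degree-$2$ vertex and $G'$ stays vertex-induced in $B$ --- and why the base case $\delta_1=1$, in which no lined nest interval with $\ell(I)=1$ need be available, has to be settled separately using the explicit subdivision-of-$K_{2,m}$ structure supplied by Proposition~\ref{pr51}.
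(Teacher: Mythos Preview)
Your proof is correct, and the overall architecture matches the paper's, but the two directions are handled somewhat differently.

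For the ``only if'' direction (if $\delta_1\ge 1$ then there is a vertex-induced subdivision of $K_{2,3}$), both you and the paper peel off a lined nest interval and show the resulting $G'$ is vertex-induced, then recurse. The paper inducts on the number $p(\overline G)$ of nest intervals with base case $p=1$; you induct on the nullity and treat $\delta_1=1$ as a separate base case via Proposition~\ref{pr51} and Lemma~\ref{lm22}(v). Your organization is arguably cleaner: by only invoking the reduction when $\delta_1\ge 2$, you guarantee $\delta_1(M(G'))\ge 1$, so the induction hypothesis applies without further case analysis.

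For the ``if'' direction (a vertex-induced subdivision of $K_{2,3}$ forces $\delta_1\ge 1$), the approaches genuinely diverge. The paper proves the stronger statement that every vertex-induced subgraph of $\overline G$ stays in $\mathcal S_0$, by an explicit computation of $M(\overline G-v)$ from the parallel-connection decomposition of $\overline G$. You instead seed an ear decomposition of $B$ with $\pi_1=P_1\cup P_2$, $\pi_2=P_3$, observe that the induced-subgraph hypothesis rules out any $uv$-edge in $B$, conclude $\ell(I)\ge 2$, and read off $p_2\ge 1$ from Theorem~\ref{th45}. Your route is shorter and uses the ear-decomposition machinery more directly; the paper's route buys the extra fact that $\mathcal S_0$ (at the level of simplifications) is closed under vertex-induced subgraphs, which you do not need but which is of independent interest.
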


\begin{proof}
 We may assume that $G$ is a block. By Lemma \ref{lm24}(i), the condition that $G$ has no subgraph that is a subdivision of $K_4$ is equivalent to the fact that $G$ is a series-parallel network. Hence we need to prove that for a series-parallel network $G$, $G\in \mathcal{S}_0$ if and only if $\overline{G}$ contains no vertex-induced subgraph that is a subdivision of $K_{2,3}$. First, suppose $G\in \mathcal{S}_0$. We will show that every nonempty vertex-induced subgraph of $\overline{G}$ also belongs to $\mathcal{S}_0$. By Lemmas \ref{lm22}(i), (v) and \ref{lm26}, we may assume that $\overline{G}=P(D_1,\ldots,D_k)$, where the $D_i$ are simple cycles. Let $F$ be the set of baseedges of that parallel connection. For a vertex $v$ of $\overline{G}$, let $D_{j_1},\ldots,D_{j_l}$ be all the cycles containing $v$. Then using Lemma \ref{lm22}(iii) one may easily check that (see Figure \ref{fig4})
\[\begin{aligned}
M(\overline{G}-v)=&P(M(D_1),\ldots,M(D_{j_1-1}))\oplus M(D_{j_1}-v-F)\oplus P(M(D_{j_1+1}),\ldots,M(D_{j_2-1}))\\
&\oplus M(D_{j_2}-v-F)\oplus\cdots\oplus P(M(D_{j_l+1}),\ldots,M(D_k)).
\end{aligned}\]
Since $M(D_{j_i}-v-F)$ is either empty or a direct sum of coloops for $i=1,\ldots,l$, we deduce that $\overline{G}-v\in \mathcal{S}_0$. Consequently, every nonempty vertex-induced subgraph of $\overline{G}$ is in $\mathcal{S}_0$. Therefore, by Lemma \ref{lm52}, $\overline{G}$ has no vertex-induced subgraph that is a subdivision of $K_{2,3}$.

Now suppose $G\in \mathcal{S}_{1^+}$. We must show that $\overline{G}$ contains a vertex-induced subgraph that is a subdivision of $K_{2,3}$. The argument is by induction on the number $p(\overline{G})$ of nest intervals appearing in an ear decomposition of $\overline{G}$. If $p(\overline{G})=1$, then $\overline{G}\in \mathcal{S}_1$ and $\overline{G}$ is parallel irreducible by Corollary \ref{co410}. So from (the proof of) Lemma \ref{lm52}, $\overline{G}$ is a subdivision of $K_{2,m}$ for some $m\geq 3$. It follows that $\overline{G}$ has a vertex-induced subgraph that is a subdivision of $K_{2,3}$. Now consider the case $p(\overline{G})>1$. Let $\Pi$ be an ear decomposition of $\overline{G}$ and $I$ a nest interval which is lined in $\overline{G}$. The existence of $I$ is ensured by Lemma \ref{lm46}. Let $\overline{G}'$ be the subgraph of $\overline{G}$ induced by the ears in $\Pi':=\Pi-\sigma(I)$. Since $\overline{G}$ is simple, there is at most one path in $\sigma(I)^+=\sigma(I)\cup\{I\}$ which
has length 1. So by Remark \ref{rm43}, we may assume that all paths in $\sigma(I)$ have length at least 2. Then $\overline{G}'$ is a vertex-induced subgraph of $\overline{G}$. The desired conclusion now follows easily from the induction hypothesis.
\end{proof}

\begin{figure}[htb]
\begin{tikzpicture}[scale=1.5]
\draw (0,2)-- (0,1);
\draw (0,2)-- (1,2);
\draw (1,2)-- (0,1);
\draw (1,2)-- (0,3);
\draw (0,2)-- (0,3);
\draw (0,2)-- (0.51,2.49);
\draw (1,2)-- (2,1);
\draw (2,1)-- (1,0);
\draw (1,0)-- (0,1);
\draw (0,1)-- (0,0);
\draw (0,0)-- (1,0);
\draw (1,0)-- (2,0);
\draw (2,0)-- (2,1);
\draw (0.89,1.9) node[anchor=north west] {$v$};
\draw (0,0)-- (1,0);
\draw (0,0)-- (0,1);
\draw (0,1)-- (0,2);
\draw (0,2)-- (0,3);
\draw (3,0)-- (4,0);
\draw (4,0)-- (5,0);
\draw (3,1)-- (3,0);
\draw (4,0)-- (3,1);
\draw (5,1)-- (4,0);
\draw (5,1)-- (5,0);
\draw (5,0)-- (4,0);
\draw (3,3)-- (3,2);
\draw (3,2)-- (3,1);
\draw (3.57,2.55)-- (3,3);
\draw (3.57,2.55)-- (3,2);

\draw (0.8,-0.19) node[anchor=north west] {$\overline{G}$};
\draw (3.7,-0.23) node[anchor=north west] {$\overline{G}-v$};
\begin{scriptsize}
\draw (1.48,0.39) node[anchor=north west] {$D_1$};
\draw (0.84,1) node[anchor=north west] {$D_2$};
\draw (0.18,0.39) node[anchor=north west] {$D_3$};
\draw (0.1,1.8) node[anchor=north west] {$D_4$};
\draw (0.31,2.3) node[anchor=north west] {$D_5$};
\draw (0,2.66) node[anchor=north west] {$D_6$};

\draw (4.45,0.36) node[anchor=north west] {$D_1$};
\draw (3.17,0.38) node[anchor=north west] {$D_3$};
\draw (3.03,2.7) node[anchor=north west] {$D_6$};
\fill (0,0) circle (1.5pt);
\fill (1,0) circle (1.5pt);
\fill  (0,1) circle (1.5pt);
\fill (1,2) circle (1.5pt);
\fill (2,1) circle (1.5pt);
\fill  (2,0) circle (1.5pt);
\fill  (0,2) circle (1.5pt);
\fill (0,3) circle (1.5pt);
\fill  (0.51,2.49) circle (1.5pt);
\fill (4,0) circle (1.5pt);
\fill (3,0) circle (1.5pt);
\fill  (5,0) circle (1.5pt);
\fill (3,1) circle (1.5pt);
\fill (5,1) circle (1.5pt);
\fill (3,2) circle (1.5pt);
\fill (3,3) circle (1.5pt);
\fill (3.57,2.55) circle (1.5pt);
\end{scriptsize}
\end{tikzpicture}
 \caption{The class $\mathcal{S}_0$ is closed under vertex deletions.}\label{fig4}
\end{figure}
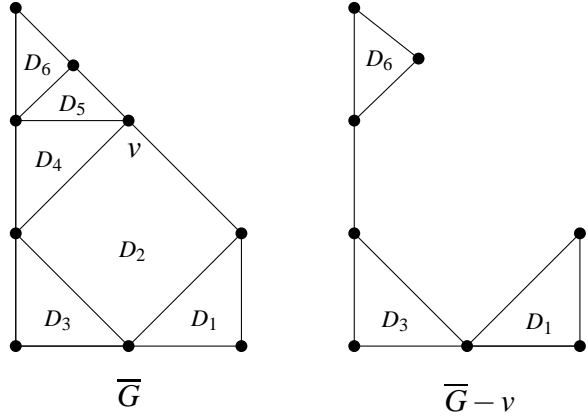

The above theorem shows a close relationship between the class $\mathcal{S}_0$ and outerplanar graphs. We say that a connected graph is \emph{outerplanar} if it can be embedded in the plane so that every vertex lies on the boundary of the infinite face. It was proved by Chartrand--Harary \cite[Theorem 1]{CH} that a graph is outerplanar if and only if it contains no subgraph that is a subdivision of $K_4$ or $K_{2,3}$.

\begin{corollary}\label{co59}
Every loopless outerplanar graph belongs to $\mathcal{S}_0$. Conversely, let $G$ be a graph in $\mathcal{S}_0$ with simplification $\overline{G}$. Then $G$ is outerplanar if and only if every block of $\overline{G}$ other than an edge is an iterated parallel connection of simple cycles with respect to pairwise distinct baseedges.
\end{corollary}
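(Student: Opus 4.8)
The statement has two halves; I would handle them separately.

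\emph{The first assertion} is immediate. If $G$ is loopless and outerplanar, then by the Chartrand--Harary theorem recalled above $G$ has no subgraph that is a subdivision of $K_4$ or of $K_{2,3}$; since $\overline{G}$ is isomorphic to a subgraph of $G$, the same holds for $\overline{G}$, so in particular $\overline{G}$ has no vertex-induced subdivision of $K_{2,3}$. Theorem \ref{th58} then yields $G\in\mathcal{S}_0$.

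\emph{For the second assertion} I would first reduce to a single block. Using Chartrand--Harary and the fact that a subdivision of $K_4$ or of $K_{2,3}$ is $2$-connected and uses at most one edge from each parallel class, $G$ is outerplanar iff $\overline{G}$ is, iff every block of $\overline{G}$ is outerplanar (such a subdivision, being $2$-connected, lies inside a single block), iff every block of $\overline{G}$ other than an edge is outerplanar. So fix a block $B$ of $\overline{G}$ with at least $3$ edges. Since $M(B)\in\mathcal{S}_0$, Lemma \ref{lm26}(iv) together with Lemma \ref{lm22}(v) lets us write $B=P(C_1,\dots,C_k)$ with each $C_i$ a simple cycle, and by Remark \ref{rm22}(ii) the multiset $\{C_1,\dots,C_k\}$ depends only on $B$. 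It now suffices to prove the equivalence of: (a) $B$ is outerplanar; (b) no edge of $B$ lies on three or more of the cycles $C_1,\dots,C_k$; (c) $B$ admits an iterated parallel decomposition into $C_1,\dots,C_k$ with pairwise distinct baseedges.

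\emph{The equivalences.} For (a)$\Rightarrow$(b) I argue by contraposition: if an edge $e=uv$ lies on three of the $C_i$, then --- using that two component cycles of an iterated parallel connection which share an edge have only the two endpoints of that edge in common --- the three paths $C_i-e$ are internally disjoint $uv$-paths of length at least $2$ (here simplicity of $B$ is used), so their union is a subdivision of $K_{2,3}$ and $B$ is not outerplanar. For (b)$\Leftrightarrow$(c): in any decomposition into the $C_i$ with distinct baseedges, an edge on three of them would be forced to serve as the baseedge at two of the attaching steps, a contradiction; conversely, assuming (b), I would form the ``intersection graph'' $T$ with vertices $C_1,\dots,C_k$ and an edge, labelled $e$, for each edge $e$ of $B$ lying on exactly two of the $C_i$ (by (b) these labels are pairwise distinct and no two $C_i$ share more than one edge), note that $T$ is connected because each $C_m$ meets an earlier cycle in its baseedge, and that $T$ is a tree because a cycle in $T$ would produce a $K_4$-subdivision in $B$, impossible for the series-parallel network $B$ (Lemma \ref{lm24}); a rooted traversal of $T$ then reads off the desired decomposition, its baseedges being exactly the $k-1$ distinct labels of $T$. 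Finally, for (c)$\Rightarrow$(a) I would induct on $k$, proving the sharper statement that such a $B$ has an outerplanar embedding in which every non-baseedge lies on the (Hamiltonian) boundary of the outer face: after reindexing so that $C_k$ is attached along the baseedge $e_{k-1}$ --- which is a non-baseedge of $P(C_1,\dots,C_{k-1})$ --- the inductive hypothesis puts $e_{k-1}$ on the outer boundary with its endpoints consecutive, so the path $C_k-e_{k-1}$ (internally disjoint from the rest of $B$) can be drawn in the outer face hugging $e_{k-1}$; this keeps all vertices on the outer face and replaces $e_{k-1}$ on the boundary by the new path, which is precisely the invariant for $(C_1,\dots,C_k)$.

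\emph{Main obstacle.} The two genuinely technical points will be the planar-topology bookkeeping in (c)$\Rightarrow$(a) --- making ``hug $e_{k-1}$'' and the outer-boundary invariant precise --- and the extraction of a $K_4$-subdivision from a cycle in the intersection graph $T$; everything else is a direct appeal to the results already established.
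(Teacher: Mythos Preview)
Your argument is correct and follows the same two key ideas as the paper: three component cycles through a common edge produce a $K_{2,3}$-subdivision, and a decomposition with distinct baseedges can be drawn outerplanarly. The paper, however, is considerably shorter because it never introduces your intermediate condition (b) or the intersection graph $T$: it simply fixes \emph{one} decomposition $B=P(D_1,\ldots,D_k)$ coming from Lemma~\ref{lm26} and shows directly that \emph{this} decomposition has a repeated baseedge iff $B$ fails to be outerplanar. The point you may be missing is that (b) already forces the \emph{given} decomposition to have pairwise distinct baseedges: if $e_i=e_j=e$ with $i<j$, then $e$ lies in $C_{i+1}$, in $C_{j+1}$, and in some $C_l$ with $l\le i$, so (b) fails. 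Hence (b)$\Rightarrow$(c) is immediate and the whole tree argument --- including the ``cycle in $T$ yields a $K_4$-subdivision'' step you flag as a main obstacle --- is unnecessary. (In fact $T$ is a tree for the even simpler reason that, under (b), each $C_m$ with $m\ge2$ has exactly one $T$-neighbour among $C_1,\dots,C_{m-1}$; no appeal to $K_4$ is needed, and that appeal would be hard to make precise.) Your explicit outer-boundary invariant in (c)$\Rightarrow$(a) is a clean formulation of what the paper dismisses as ``one may easily draw the cycles''; that part is genuinely sharper than the paper's version.
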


\begin{proof}
The first assertion follows immediately from Theorem \ref{th58} and the theorem of Chartrand--Harary mentioned above. Let us prove the second one. By Lemmas \ref{lm22}(i), (v) and \ref{lm26}, each block of $\overline{G}$ other than an edge is an iterated parallel connection $P(D_1,\ldots,D_k)$ of simple cycles. Let $e_i=D_{i+1}\cap(\bigcup_{j=1}^{i}D_j)$, $1\leq i\leq k-1$, be the baseedges of the parallel connection. If there are two non-distinct baseedges, say $e_1=e_2$, then $P(D_1,D_2,D_3)-e_1$ is a subgraph of ${G}$ which is a subdivision of $K_{2,3}$. So $G$ is not outerplanar. Now if all the baseedges $e_i$ are pairwise distinct, then one may easily embed the cycles $D_1,\ldots,D_k$ in the plane so that all vertices of $P(D_1,\ldots,D_k)$ lie on the boundary of the infinite face. It follows that $\overline{G}$, and hence $G$ as well, is outerplanar.
\end{proof}

\begin{remark}
 The subclass of $\mathcal{S}_0$ consisting of simple blocks, defined by a different characterization, was studied by McKee \cite{Mc}. In particular, he obtained results similar to Theorem \ref{th58} and Corollary \ref{co59}.
\end{remark}

The remaining part of this section is devoted to the study of $A$-graphs. Let $G$ be a connected graph. (Note that $G$ may contain loops.) We call $G$ an \emph{$A$-graph} if each block of $G$ other than an edge is an iterated parallel connection of non-loop cycles whose set of baseedges contains no cycle of $G$. Thus, in particular, loopless $A$-graphs are in $\mathcal{S}_0$. $A$-graphs were introduced by Fenton \cite{Fe} to characterize binary fundamental transversal matroids as well as a class of matroids which he called atomic (this prompted the name of $A$-graphs). It is proved in \cite[Theorems 3.5, 4.3]{Fe} that cycle matroids of $A$-graphs are exactly binary fundamental transversal matroids. This class of matroids is contained in the class of binary transversal matroids for which a characterization was found by Bondy \cite{Bo} and de Sousa--Welsh \cite{DW}: a matroid is binary transversal if and only if it is the cycle matroid of a graph which contains no subgraph that is a subdivision of $K_4$ or
$C_m^2$ for $m\geq 3$ (here $C_m^2$ is obtained from an $m$-cycle by replacing each edge with two parallel edges). Based on this characterization, it is conjectured in \cite{Fe} that $A$-graphs are precisely those graphs which contain no subgraph that is a subdivision of $K_4$, $K_{2,3}$ or $C_m^2$ for $m\geq 3$. As it stands, this conjecture is not true. For instance, the graph obtained from $K_{2,3}$ by adding a new edge between the two vertices of degree 3 is an $A$-graph. Nevertheless, in light of Theorem \ref{th58}, a slight modification of the conjecture does hold true:

\begin{corollary}\label{co511}
 Let $G$ be a connected graph with the simplification $\overline{G}$. Then the following conditions are equivalent:
\begin{enumerate}
 \item
$G$ is an $A$-graph;

\item
$G$ contains no subgraph that is a subdivision of  $K_4$ or $C^2_m$ for $m\geq 3$ and $\overline{G}$ contains no vertex-induced subgraph that is a subdivision of $K_{2,3}$;

\item
$G$ is planar and $G^*$ is an $A$-graph, where $G^*$ is a geometric dual of $G$;
\end{enumerate}
If $G$ is loopless and coloopless, then each of the above conditions is equivalent to the following one:
\begin{enumerate}
\item[\rm(iv)]
$G$ is planar and $G,G^*\in\mathcal{S}_0$.
\end{enumerate}
\end{corollary}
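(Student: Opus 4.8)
The plan is to reduce to the case that $G$ is loopless and coloopless, prove $(i)\Leftrightarrow(ii)$ using Theorem~\ref{th58} together with the theorems of Fenton \cite{Fe} and of Bondy \cite{Bo} and de~Sousa--Welsh \cite{DW}, and then deduce $(iii)$ --- and $(iv)$ in the loopless coloopless case --- from $(ii)$ by planar duality. The reduction is harmless: none of $(i)$--$(iv)$ is affected by adjoining or deleting loops and coloops (a coloop is ``an edge'', while a loop lies in no block relevant to the $A$-graph or $\mathcal{S}_0$ conditions and in no subdivision of $K_4$, $C^2_m$ or $K_{2,3}$), and a geometric dual interchanges the loops of $G$ with the coloops of $G^*$, so the reduction respects the clauses involving $G^*$. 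Henceforth $G$ is loopless and coloopless; for $(iv)$ this is the standing hypothesis.

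$(i)\Rightarrow(ii)$: since loopless $A$-graphs lie in $\mathcal{S}_0$, Theorem~\ref{th58} shows that $G$ has no subgraph that is a subdivision of $K_4$ and $\overline{G}$ has no vertex-induced subgraph that is a subdivision of $K_{2,3}$. Also, by Fenton \cite{Fe} the matroid $M(G)$ is binary fundamental transversal, hence binary transversal, so by \cite{Bo,DW} there is a graph $H$ with $M(H)=M(G)$ containing no subgraph that is a subdivision of $K_4$ or of $C^2_m$ for $m\ge3$. Now ``containing a subgraph that is a subdivision of $K_4$ (resp.\ $C^2_m$)'' is equivalent to ``$M(K_4)$ (resp.\ $M(C^2_m)$) is a series minor of the cycle matroid'': a subgraph is a deletion and a subdivision a series extension, and conversely a graph whose cycle matroid is $M(K_4)$ or $M(C^2_m)$ must be $K_4$ or $C^2_m$ up to isomorphism, since its simplification is $K_4$ or an $m$-cycle, which pins the graph down. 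As this is a property of $M(G)=M(H)$ alone, $G$ inherits from $H$ the absence of such subdivisions, and $(ii)$ follows. Conversely, assume $(ii)$. Its first clause makes each block of $G$ a series-parallel network, and together with the third clause Theorem~\ref{th58} gives $M(G)\in\mathcal{S}_0$, so each nontrivial block $B$ is an iterated parallel connection of non-loop cycles; by Remark~\ref{rm46} we may take such a decomposition whose baseedge set is the canonical set $\overline{F}(B)$. If $G$ were not an $A$-graph, then for some block $B$ this set would contain a cycle $f_1f_2\cdots f_m$ of $B$. Each $f_i$, being a baseedge, is the common edge of two of the cycles of the decomposition, and two such cycles share only $f_i$ and its two endpoints; choosing for each $i$ the complementary arc $P_i$ of one of them yields a path joining the ends of $f_i$ internally disjoint from $f_i$, and --- by an induction along $f_1\cdots f_m$ that peels off lined nest intervals as in the proofs of Theorems~\ref{th42} and \ref{th45} --- the arcs $P_1,\dots,P_m$ can be chosen pairwise internally disjoint and internally disjoint from $f_1\cdots f_m$. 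Then $f_1\cdots f_m$ together with $P_1,\dots,P_m$ is a subgraph of $B$ isomorphic to a subdivision of $C^2_m$, contradicting $(ii)$. Hence $G$ is an $A$-graph.

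For $(iii)$ and $(iv)$, note that $(ii)$ is self-dual. Assuming $(ii)$, its first clause makes every block of $G$ a series-parallel network, so $G$ is planar, $M(G)$ is a direct sum of series-parallel networks and hence so is $M(G^*)=M(G)^*$ (this class being closed under duality), and $G^*$ is again loopless and coloopless. Since $N$ is a series minor of $M$ iff $N^*$ is a parallel minor of $M^*$, since $U_{2,4}$ and $M(K_4)$ are self-dual, and since $M(C^2_m)^*\cong M(K_{2,m})$, the clause ``no $K_4$-subdivision'' transfers to $G^*$, while ``$G$ has no $C^2_m$-subdivision'' says $M(G^*)$ has no $M(K_{2,m})$ parallel minor, which by Proposition~\ref{pr54} means $M(G^*)\in\mathcal{S}_0$, i.e.\ (Theorem~\ref{th58}) $\overline{G^*}$ has no vertex-induced $K_{2,3}$-subdivision; and symmetrically ``$\overline{G}$ has no vertex-induced $K_{2,3}$-subdivision'' is equivalent to ``$G^*$ has no $C^2_m$-subdivision''. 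Thus $(ii)$ holds for $G$ iff it holds for $G^*$; combining this with $(i)\Leftrightarrow(ii)$ applied to $G^*$ and with $(G^*)^*=G$ gives $(ii)\Leftrightarrow(iii)$. In the loopless coloopless case the same rewriting turns $(ii)$ into ``$G$ planar and $M(G),M(G^*)\in\mathcal{S}_0$'', which is exactly $(iv)$.

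The main obstacle is the construction in $(ii)\Rightarrow(i)$: upgrading a cycle of baseedges to an honest subdivision of $C^2_m$, that is, verifying that the complementary arcs $P_i$ can be chosen pairwise internally disjoint (and disjoint from the baseedge cycle). I expect the lined-nest-interval induction already used in Sections~4--5 to deliver this, but it must be set up carefully; everything else is bookkeeping with duality on top of Theorems~\ref{th58}, \ref{th42}, \ref{th45} and Proposition~\ref{pr54}.
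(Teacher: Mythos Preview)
Your approach is essentially the paper's: the paper first proves a matroid version (Corollary~\ref{co512}) using Theorem~\ref{th57} and then deduces Corollary~\ref{co511} by replacing Theorem~\ref{th57} with Theorem~\ref{th58}, but the ingredients---Fenton's theorem, Bondy and de~Sousa--Welsh, Remark~\ref{rm46}, and the duality $M(C^2_m)^*\cong M(K_{2,m})$ combined with Proposition~\ref{pr54}---are exactly the ones you use.

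The step you flag as the main obstacle is precisely the step the paper also asserts without proof (in Corollary~\ref{co512}: ``$\overline{F}(G)$ also contains no $m$-cycle \ldots\ since $M$ has no series minor isomorphic to $M(C^2_m)$''). Your proposed lined-nest-interval induction is more machinery than needed. Here is a direct argument: if $C=f_1\cdots f_m\subseteq\overline{F}(B)$ with $f_i=v_iv_{i+1}$, then each $\{v_i,v_{i+1}\}$ is a $2$-cut of $B$ (that is what $f_i\in F(B)$ means). Let $A_i$ be a component of $B-\{v_i,v_{i+1}\}$ \emph{not} containing the other $v_j$'s, and let $P_i$ be a $v_i$--$v_{i+1}$ path through $A_i$ (this exists since $B$ is $2$-connected). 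For $i\ne j$, the set $A_i$ avoids $v_j,v_{j+1}$ and is connected to a vertex of $C-f_j$, so it lies in the component of $B-\{v_j,v_{j+1}\}$ containing $C-f_j$; hence $A_i\cap A_j=\emptyset$. Thus the $P_i$ are pairwise internally disjoint and internally disjoint from $C$, and $C\cup P_1\cup\cdots\cup P_m$ is the required subdivision of $C^2_m$. With this in hand, your argument is complete.
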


It is more convenient to prove first a matroid version of the above result. For brevity, cycle matroids of $A$-graphs will be called \emph{$A$-matroids}.

\begin{corollary}\label{co512}
 Let $M$ be a matroid. Denote by $M^*$ the dual of $M$. Then the following conditions are equivalent:
\begin{enumerate}
 \item
$M$ is an $A$-matroid;

\item
$M$ has no minor isomorphic to $U_{2,4}$ or $M(K_4)$, no series minor isomorphic to $M(C^2_m)$ for $m\geq 3$, and no parallel minor isomorphic to $M(K_{2,m})$ for $m\geq 3$;

\item
$M^*$ is an $A$-matroid;
\end{enumerate}
If $M$ is loopless and coloopless, then each of the above conditions is equivalent to the following one:
\begin{enumerate}
\item[\rm(iv)]
 $M,M^*\in \mathcal{S}_0$.
\end{enumerate}
\end{corollary}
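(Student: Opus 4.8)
The plan is to establish the two central equivalences $\text{(i)}\Leftrightarrow\text{(ii)}$ and then derive $\text{(iii)}$ and $\text{(iv)}$ by duality. Since $U_{2,4}$, $M(K_4)$, $M(C^2_m)$ and $M(K_{2,m})$ are all connected, each of (i)--(iv) holds for $M$ if and only if it holds for every connected component of $M$ (for (i) because a block of an $A$-graph is, by definition, either an edge or an iterated parallel connection of non-loop cycles whose set of basepoints contains no circuit, and connected graphs with prescribed blocks always exist; for (iv) by Lemma \ref{lm26}). A single loop or coloop is an $A$-matroid that satisfies (ii) and (iii) vacuously and cannot occur inside any of the forbidden (series or parallel) minors, so we may assume $M$ is loopless, connected, with $|E|\ge 2$. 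For $\text{(i)}\Rightarrow\text{(ii)}$ I would invoke the results quoted in the Introduction: an $A$-matroid is a binary (fundamental) transversal matroid \cite{Fe}, hence by the Bondy--de Sousa--Welsh theorem \cite{Bo,DW} it is the cycle matroid of a graph containing no subdivision of $K_4$ or $C^2_m$; converting ``no $K_4$-subdivision'' into ``series-parallel network'' (Lemma \ref{lm24}) and ``no $C^2_m$-subdivision'' into ``no series minor $\cong M(C^2_m)$'' --- conversions that are matroid-invariant --- yields the first and third parts of (ii), while a loopless $A$-matroid lies in $\mathcal S_0$, so Theorem \ref{th57} yields the second part.

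For $\text{(ii)}\Rightarrow\text{(i)}$, Theorem \ref{th57} gives $M\in\mathcal S_0$, so by Remark \ref{rm46} we may write $M=P(D_1,\dots,D_k)$, an iterated parallel connection of non-loop circuits whose basepoint set is $\overline F(M)$; thus $M$ is an $A$-matroid precisely when $\overline F(M)$ contains no circuit of $M$. Suppose, to the contrary, that some circuit $C=\{f_1,\dots,f_m\}\subseteq\overline F(M)$; then $C$ is a circuit of the simple matroid $\overline M$, so $m\ge 3$. Fix a block $G$ with $M(G)=\overline M$. Each $f_i$ is a basepoint, so $\overline M=P(A_i,B_i)$ with basepoint $f_i$ and both factors non-trivial; realising this in $G$ (Lemma \ref{lm22}(v)), the circuit $C$ lies entirely on one side, and the other side $H_i$ is a block in which $f_i$ is the only edge joining its two attachment vertices, so $H_i-f_i$ contains a path $P_i$ between those vertices. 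Choosing $P_i$ inside a single bridge of $C$ whose attachment set is exactly the pair of endpoints of $f_i$ forces $P_1,\dots,P_m$ to lie in pairwise distinct bridges of $C$, hence pairwise internally disjoint and internally disjoint from $C$; therefore $C\cup P_1\cup\dots\cup P_m$ is the edge set of a subdivision of $C^2_m$ in $G$, so $\overline M$, and hence $M$, has $M(C^2_m)$ as a series minor, contradicting (ii). Thus $M$ is an $A$-matroid.

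The remaining equivalences are formal. Condition (ii) is self-dual: $U_{2,4}^*=U_{2,4}$ and $M(K_4)^*\cong M(K_4)$; $N$ is a series (resp. parallel) minor of $M$ iff $N^*$ is a parallel (resp. series) minor of $M^*$; and the planar dual of the standard drawing of $K_{2,m}$ --- whose $m$ quadrilateral faces become $m$ vertices, each pair joined by the two edges they share --- satisfies $M(K_{2,m})^*\cong M(C^2_m)$. Hence $M$ satisfies (ii) if and only if $M^*$ does, so $M$ is an $A$-matroid iff $M^*$ is, which is $\text{(i)}\Leftrightarrow\text{(iii)}$. Finally, if $M$ is loopless and coloopless, Theorem \ref{th57} applied to $M$ and, dualised, to $M^*$ shows that $M,M^*\in\mathcal S_0$ if and only if $M$ has no $U_{2,4}$ or $M(K_4)$ minor, no $M(K_{2,m})$ parallel minor and no $M(C^2_m)$ series minor --- that is, if and only if (ii) holds; so $\text{(iv)}\Leftrightarrow\text{(ii)}$. (Looplessness and coloopless-ness of $M$ are needed only so that $\mathcal S_0$-membership of $M$ and of $M^*$ is meaningful.)

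I expect the main work to be the construction in $\text{(ii)}\Rightarrow\text{(i)}$: producing an honest subdivision of $C^2_m$ from a circuit sitting inside $\overline F(M)$. The point to get right is that the ``parallel'' paths $P_i$ attached to distinct basepoints can be chosen pairwise disjoint; this rests on the fact that single-edge $2$-separations along a common cycle are non-crossing, so each $P_i$ can be housed in a distinct bridge of $C$. This is a clean but slightly technical step, most safely organised by induction on the nullity of $M$, peeling off a lined nest interval exactly as in the proofs of Theorems \ref{th42} and \ref{th45}, so that at each stage one removable line supplies one new strand of the $C^2_m$. The reductions, the appeal to \cite{Fe} and \cite{Bo,DW} for $\text{(i)}\Rightarrow\text{(ii)}$, and the duality bookkeeping for $\text{(iii)}$ and $\text{(iv)}$ are routine.
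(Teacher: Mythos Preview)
Your argument follows the paper's proof essentially line for line: the same reduction to the connected case, the same appeal to Fenton and Bondy--de~Sousa--Welsh for (i)$\Rightarrow$(ii), the same use of Theorem~\ref{th57} and Remark~\ref{rm46} for (ii)$\Rightarrow$(i), and the same self-duality observations ($U_{2,4}^*\cong U_{2,4}$, $M(K_4)^*\cong M(K_4)$, $M(C^2_m)^*\cong M(K_{2,m})$) for (iii) and (iv).

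The one place you go beyond the paper is in (ii)$\Rightarrow$(i): the paper simply asserts that ``$\overline F(G)$ contains no $m$-cycle since $M$ has no series minor isomorphic to $M(C^2_m)$,'' whereas you actually supply the construction of the $C^2_m$-subdivision via bridges of $C$. Your bridge argument is correct and your hesitation at the end is unwarranted: in a series-parallel block every bridge of a cycle has exactly two feet (three feet would yield a $K_4$-subdivision), so the non-trivial side $B_i-f_i$ of the $2$-separation at $f_i$ is a union of bridges each attached precisely at the endpoints of $f_i$; picking one path $P_i$ in each gives internally disjoint paths because bridges with distinct attachment pairs are distinct, and distinct bridges share no internal vertices. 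No induction on nullity is needed.
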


\begin{proof}
 We may assume that $M$ is a loopless, coloopless connected matroid.

(i)$\Rightarrow$(ii): As mentioned before, $A$-matroids are binary transversal matroids; see \cite[Theorems 3.5, 4.3]{Fe}. So by \cite[Theorem 1]{Bo} and \cite[Theorem 1]{DW} (see also \cite[Theorem 13.4.8]{O}), $M$ does not contain $M(C^2_m)$ for $m\geq 3$ as series minors. On the other hand, since $M\in\mathcal{S}_0$, other excluded minors of $M$ as stated in (ii) come from Theorem \ref{th57}.

(ii)$\Rightarrow$(i): We first have $M\in\mathcal{S}_0$ by Theorem \ref{th57}. Let $G$ be a block with $M=M(G)$. By Lemmas \ref{lm22}(v) and \ref{lm26}, $G$ can be decomposed as an iterated parallel connection of non-loop cycles.
We must show that there exists such a decomposition whose set of baseedges contains no cycle of $G$. Let $\overline{F}(G)$ have the same meaning as in Theorem \ref{th42}. By Remark \ref{rm46}, we may find a decomposition of $G$ (as an iterated parallel connection of non-loop cycles) such that the set of baseedges is $\overline{F}(G)$. Note that $\overline{F}(G)$ contains no 2-cycle of $G$ by definition. On the other hand, $\overline{F}(G)$ also contains no $m$-cycle of $G$ for $m\geq 3$ since $M$ has no series minor isomorphic to $M(C^2_m)$. Therefore, we can conclude that $M$ is an $A$-matroid.

(i)$\Leftrightarrow$(iii): Note that $U_{2,4}$ or $M(K_4)$ is isomorphic to its dual, while $M(C^2_m)^*\cong M(K_{2,m})$ for $m\geq 3$. So from (i)$\Leftrightarrow$(ii) we immediately get (i)$\Leftrightarrow$(iii).

(i)$\Rightarrow$(iv): This follows easily from (i)$\Leftrightarrow$(iii).

(iv)$\Rightarrow$(ii): This follows from Theorem \ref{th57} and the fact that $M(C^2_m)^*\cong M(K_{2,m})$.
\end{proof}

Now we prove Corollary \ref{co511}.

\begin{proof}[Proof of Corollary \ref{co511}] The proof of (i)$\Leftrightarrow$(ii) is similar to that of (i)$\Leftrightarrow$(ii) in Corollary \ref{co512}, except that we now use Theorem \ref{th58} instead of Theorem \ref{th57}. It is clear that $A$-graphs are planar. So the equivalence of conditions (i), (iii) and (iv) follows from the equivalence of the corresponding conditions in Corollary \ref{co512}.
\end{proof}

It is proved in \cite[Theorem 1]{GMNN} that the Tutte polynomial characterizes the class of simple outerplanar graphs, in the sense that if two graphs $G_1,G_2$ have the same Tutte polynomial and $G_1$ is simple outerplanar, then $G_2$ is also outerplanar. A similar result holds for $A$-graphs.

\begin{proposition}
 Let $M$ be a loopless, coloopless $A$-matroid. If $N$ is a matroid with the same Tutte polynomial as $M$, then $N$ is also an $A$-matroid.
\end{proposition}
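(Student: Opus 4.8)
The plan is to reduce the statement to two facts that are visibly invariant under the Tutte polynomial. By Corollary \ref{co512}, a loopless and coloopless matroid is an $A$-matroid if and only if both it and its dual belong to $\mathcal{S}_0$; by Lemma \ref{lm26}, a loopless matroid lies in $\mathcal{S}_0$ if and only if the $h$-vector of its broken circuit complex is symmetric. Since $h(P;x)=t(P;x,0)$ for a loopless matroid $P$, this $h$-vector is read off from the Tutte polynomial, and the Tutte polynomial of a dual is governed by the standard identity $t(P^*;x,y)=t(P;y,x)$, so if $N$ and $M$ have the same Tutte polynomial then $N^*$ and $M^*$ do as well. Granting this, the proposition becomes a short assembly.

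The first step is to check that $N$ is loopless and coloopless, since both hypotheses on $M$ are needed to invoke Corollary \ref{co512}. If $e$ were a loop of $N$, then $t(N;x,y)=y\cdot t(N-e;x,y)$, whence $t(N;x,0)=0$; but $t(N;x,0)=t(M;x,0)=h(M;x)\ne 0$, because $M$ is loopless and so $h_0(M)=1$ by Lemma \ref{lm25}(i). Hence $N$ is loopless. Dually, $e$ is a coloop of $N$ if and only if $e$ is a loop of $N^*$, in which case the same factorization gives $t(N^*;x,0)=0$; but $t(N^*;x,0)=t(M^*;x,0)=h(M^*;x)\ne 0$, since $M^*$ is loopless ($M$ being coloopless). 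Hence $N$ is coloopless as well.

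Next I would transport the symmetry of $h$-vectors from $M$ to $N$. Because $M$ is a loopless, coloopless $A$-matroid, Corollary \ref{co512} gives $M,M^*\in\mathcal{S}_0$, and then Lemma \ref{lm26} says that the $h$-vectors of $BC(M)$ and of $BC(M^*)$ are symmetric. Now $h(N;x)=t(N;x,0)=t(M;x,0)=h(M;x)$, and, using $t(N^*;x,y)=t(N;y,x)=t(M;y,x)=t(M^*;x,y)$, also $h(N^*;x)=t(N^*;x,0)=t(M^*;x,0)=h(M^*;x)$. Thus $BC(N)$ and $BC(N^*)$ have symmetric $h$-vectors, and since $N$ and $N^*$ are loopless, Lemma \ref{lm26} yields $N\in\mathcal{S}_0$ and $N^*\in\mathcal{S}_0$. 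Applying Corollary \ref{co512} once more, now to $N$, which is loopless, coloopless, and satisfies $N,N^*\in\mathcal{S}_0$, we conclude that $N$ is an $A$-matroid.

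I expect no serious obstacle: once Corollary \ref{co512} is in hand, the argument is essentially bookkeeping. The only points needing a moment's care are the detection of loops and coloops of $N$ from $t(N;x,0)$ and $t(N^*;x,0)$, handled in the first step via the loop/coloop factorizations of the Tutte polynomial, and the harmless passage from the pair $(N,M)$ to the pair $(N^*,M^*)$ through Tutte duality.
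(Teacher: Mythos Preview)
Your proof is correct and follows essentially the same approach as the paper: both use Corollary \ref{co512} to reduce to showing $N,N^*\in\mathcal{S}_0$, and then transport the $h$-polynomials via $h(P;x)=t(P;x,0)$ and Tutte duality. You are in fact slightly more careful than the paper, which silently omits the verification that $N$ is loopless and coloopless before invoking Lemma \ref{lm26} and Corollary \ref{co512}.
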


\begin{proof}
Let $M^*$ be the dual of $M$. Denote by $t(M;x,y)$ and $h(M;x)$ the Tutte polynomial of $M$ and the $h$-polynomial of $BC(M)$, respectively. Recall that $h(M;x)=t(M;x,0)$ and $h(M^*;x)=t(M;0,x)$; see \cite[p. 240]{B} and \cite[Proposition 6.2.4]{BrOx2}. So if $N$ has the same Tutte polynomial as $M$, then $h(M;x)=h(N;x)$ and $h(M^*;x)=h(N^*;x)$. Since the class $\mathcal{S}_0$ is characterized by the $h$-polynomial of the broken circuit complex (see Lemma \ref{lm26}), we conclude from Corollary \ref{co512} that $N$ is an $A$-matroid.
\end{proof}

\section{The nonnegativity of $\delta_2$}

The main aim of this section is to prove the following result.

\begin{theorem}\label{th61}
  Let $M$ be a matroid in $\mathcal{S}$. Then $\delta_2(M)\geq0$.
\end{theorem}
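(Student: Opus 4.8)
The plan is to reduce to the connected case and then induct on the number of edges of an associated graph, splitting it off along a lined nest interval exactly as in the proofs of Theorems~\ref{th42} and~\ref{th45}. Note first that if $M\in\mathcal{S}_0$ then the $h$-vector is symmetric by Lemma~\ref{lm26}, so $\delta_2(M)=0$; only $M\in\mathcal{S}_{1^+}$ carries content. For the reduction, write $M=N_1\oplus\cdots\oplus N_k$ as the direct sum of its connected components (each a series-parallel network by Lemma~\ref{lm25}(vii)). Deleting the components whose $h$-polynomial is $x$ (coloops and parallel extensions of $C_2$) changes neither the $h$-vector nor $\delta_2(M)$, so we may assume every $N_i$ has $s_i:=s(M(N_i))\geq1$, hence $h_1(N_i)\geq1$; we may also assume $s(M)=\sum_is_i\geq4$. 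Using multiplicativity of the $h$-polynomial over direct sums (Lemma~\ref{lm25}(ii)), $h_{s_i}(N_i)=\beta(N_i)=1$ and $h_{s_i-1}(N_i)=h_1(N_i)+\delta_1(N_i)$, one gets
$$\delta_2(M)=\sum_i\bigl(h_{s_i-2}(N_i)-h_2(N_i)\bigr)+\sum_{i<j}\bigl(h_1(N_i)\delta_1(N_j)+h_1(N_j)\delta_1(N_i)+\delta_1(N_i)\delta_1(N_j)\bigr).$$
Since $h_{s_i-2}(N_i)-h_2(N_i)\geq-\delta_1(N_i)$ (it is $\delta_2(N_i)\geq0$ when $s_i\geq4$, equals $-\delta_1(N_i)$ when $s_i=3$, and is $0$ when $s_i\leq2$), for $k\geq2$ we obtain $\delta_2(M)\geq\sum_j\delta_1(N_j)\bigl(\sum_{i\neq j}h_1(N_i)-1\bigr)+\sum_{i<j}\delta_1(N_i)\delta_1(N_j)\geq0$, while $k=1$ is the connected case. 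So it remains to treat $M=M(G)$ with $G$ a series-parallel network.

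We induct on $|E(G)|$; if $G$ is a cycle, $\delta_2(M)=0$. Otherwise fix an ear decomposition $\Pi$ and, by Lemma~\ref{lm46}, a nest interval $I\in N(\Pi)$ lined in $G$; let $G'$ be spanned by $\Pi-\sigma(I)$, in the set-up of Lemmas~\ref{lm46} and~\ref{lm47}. If $\ell(I)=1$ we may take $I=\{e\}$ (Remark~\ref{rm43}), so $G=P(G',D_1,\dots,D_t)$ with each $D_j$ a cycle and, by Lemma~\ref{lm25}(ii), $h(M(G);x)=h(M(G');x)\prod_{j}(1+x+\cdots+x^{|D_j|-2})$. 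The product is palindromic with constant term $1$ and linear coefficient $c:=\#\{j:|D_j|\geq3\}$, and comparing coefficients gives (with $s'=s(M(G'))$)
$$\delta_2(M(G))=\bigl(h_{s'-2}(M(G'))-h_2(M(G'))\bigr)+c\,\delta_1(M(G')).$$
The second term is $\geq0$. For the first: if $s'\geq4$ it is $\delta_2(M(G'))\geq0$ by the induction hypothesis ($G'$ has fewer edges); if $s'\leq2$ it is $0$; and if $s'=3$ it equals $-\delta_1(M(G'))$, in which case $s(M(G))=s'+\sum_j(|D_j|-2)\geq4$ forces $c\geq1$, so $\delta_2(M(G))=(c-1)\delta_1(M(G'))\geq0$. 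In all cases $\delta_2(M(G))\geq0$.

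The substantive case is $\ell(I)=\lambda>1$. As in the proof of Theorem~\ref{th45}, $G\cong S(\tilde G,C_\lambda)$ with $\tilde G=G/(I-e)$ a series-parallel network (having $\tilde I:=\{e\}$ as a lined nest interval of length $1$). Applying Lemma~\ref{lm25}(iv) to the element of $C_2$ adjoined at each of the $\lambda-1$ subdivision steps gives $h(S(N,C_2);x)=h(N;x)+x\,h(N-e;x)$ (because $S(N,C_2)-e'$ is $N-e$ with a coloop attached while $S(N,C_2)/e'=N$), and telescoping yields $h(M(G);x)=h(M(\tilde G);x)+(x+\cdots+x^{\lambda-1})\,h(M(\tilde G)-e;x)$. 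Substituting $h(M(\tilde G)-e;x)=h(M(\tilde G);x)-h(M(\tilde G)/e;x)$ and using $M(\tilde G)/e=M(\tilde G'/e)\oplus\bigoplus_jM(D_j/e)$ (Lemma~\ref{lm22}(iii)), one reads off $h_2$ and $h_{s-2}$ of $M(G)$ and, with $s''=s(M(\tilde G))$ and $\tilde r=r(M(\tilde G))$, arrives at
$$\delta_2(M(G))=\underbrace{\bigl(h_{s''-2}(M(\tilde G))+h_{s''-1}(M(\tilde G))-h_1(M(\tilde G))-h_2(M(\tilde G))\bigr)}_{=:A}+\underbrace{\bigl(h_1(M(\tilde G)/e)-h_{\tilde r-3}(M(\tilde G)/e)\bigr)}_{=:B}+\varepsilon,$$
where $\varepsilon=1$ if $\lambda\geq3$ and $\varepsilon=0$ if $\lambda=2$. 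Now $A\geq0$ is exactly Lemma~\ref{lm25}(vi) for $i=2$, applicable since $M(\tilde G)$ is graphic hence representable. For $B$: since $M(\tilde G)/e$ is loopless (here $\ell(I)>1$ rules out an edge $f$ with $I\cup\{f\}$ a cycle, the only source of a loop in $\tilde G'/e$) one has $h_{\tilde r-3}(M(\tilde G)/e)\leq1$, so $B\geq-1$; and when $B=-1$ one checks that $\lambda\geq3$, hence $\varepsilon=1$ and $B+\varepsilon\geq0$. (When $\lambda=2$ one has $\tilde r=s(M(G))\geq4$, and then $B\geq0$ directly because $h_{\tilde r-3}(M(\tilde G)/e)=1$ can occur only if $\sigma(I)$ is a single ear, in which case $h_1(M(\tilde G)/e)\geq1$.) Thus $\delta_2(M(G))=A+B+\varepsilon\geq0$, completing the induction.

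The main obstacle I foresee is the bookkeeping in this last case: carefully extracting $h_2$ and $h_{s-2}$ of $M(G)$ from the telescoped identity, identifying $h_{\tilde r-3}(M(\tilde G)/e)$ in terms of $\sigma(I)$ and $\tilde G'$, and ruling out the degenerate configurations ($\tilde r<3$, or $\tilde G'/e$ acquiring a loop) — it is there that the hypothesis $\ell(I)>1$ does real work. The conceptual ingredients are modest: multiplicativity and deletion–contraction for the $h$-polynomial, the representability inequality of Lemma~\ref{lm25}(vi) for the term $A$, and the structural analysis of lined nest intervals already developed for Theorems~\ref{th42} and~\ref{th45}.
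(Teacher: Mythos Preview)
Your approach is genuinely different from the paper's and, modulo one notational slip and some compressed bookkeeping, it works. The paper inducts on the \emph{rank}: after reducing to parallel irreducible $M$ (via Lemma~\ref{lm63}(iii)) and to $\delta_1(M)\ge 2$ (via Proposition~\ref{pr62}), it picks an edge $e$ inside an ear $\pi_i\in\sigma(I)$, invokes Brown's formula $h_2(M)=\binom{n-r+1}{2}-t(M)$ (Lemma~\ref{lm64}) to get $h_2(M)=h_2(M/e)+k$ with $k$ the number of $4$-cycles through $e$, and combines this with deletion--contraction for $h_{r-3}$ to obtain $\delta_2(M)=\delta_2(M/e)+(h_{r-3}(M-e)-k)$; the remainder is a nullity count showing $h_{r-3}(M-e)\ge k$. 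You instead contract $I$ to a single edge and telescope the series-extension identity, arriving at $\delta_2(M(G))=A+B+\varepsilon$; the term $A$ is nonnegative directly by the representability inequality Lemma~\ref{lm25}(vi), so your argument for the case $\ell(I)>1$ is essentially \emph{non-inductive}, which is an attractive feature. Your reduction to the connected case by the explicit product formula is also a bit more hands-on than the paper's Lemma~\ref{lm63}(iii).

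Two points to clean up. First, the parameter $\lambda$ in your telescoped identity must be $|I|$, not $\ell(I)$: you contract $|I|-1$ edges to form $\tilde G$, so $G\cong S(\tilde G,C_{|I|})$ and $\varepsilon=[\,|I|\ge 3\,]$. Second, your parenthetical justification for $B\ge 0$ when $|I|=2$ is not quite what you wrote; the correct argument is: if $|I|=2$, $\hat h_{\tilde r-3}=1$ (so $|\sigma(I)|=1$, say $\sigma(I)=\{\pi\}$), and $\hat h_1=0$, then $|\pi|=2$ and $\overline{G'/I}\cong K_2$, forcing $|V(G'/I)|=2$, hence $|V(G')|=4$, hence $|V(G)|=5$ and $s=3<4$, contradicting $s\ge 4$. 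With these fixes (and the tacit assumption $s\ge 4$, which you use but should state at the outset of each case), the argument goes through.
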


In the special case when $M\in\mathcal{S}_1$, a stronger statement holds true.

\begin{proposition}\label{pr62}
 Let $M\in\mathcal{S}_1$. Then $\delta_i(M)\geq0$ for all $i\geq0$.
\end{proposition}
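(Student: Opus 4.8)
The plan is to reduce the claim to an explicit $h$-polynomial computation for the ``$K_{2,m}$-part'' of $M$, together with a general principle: multiplying a ``bottom-heavy'' coefficient sequence by a symmetric unimodal polynomial preserves bottom-heaviness.

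\medskip\noindent\textbf{Setup and reduction.} By Proposition~\ref{pr51} write $M = M_1 \oplus \cdots \oplus M_l$, where $M_1 = P(N_1, N_2, \ldots, N_k)$ is a parallel irreducible decomposition with $N_1 = M(G_1)$ for $G_1$ a subdivision of $K_{2,m}$ ($m \ge 3$) and $N_2, \ldots, N_k$ non-loop circuits, while each $M_j$ ($2 \le j \le l$) is either a coloop or an iterated parallel connection of non-loop circuits. By Lemma~\ref{lm25}(ii),(v), a parallel connection multiplies the two $h$-polynomials and divides by $x$, while $h(C_a;x) = x(1 + x + \cdots + x^{a-2})$ and $h(\text{coloop};x) = x$; a routine bookkeeping of the powers of $x$ then gives
\[
 h(M;x) = x^{c}\, p(x)\, R(x), \qquad p(x) := h(N_1;x)/x,\qquad R(x) := \prod_{C_a}\bigl(1 + x + \cdots + x^{a-2}\bigr),
\]
where $c \ge 0$ and the product defining $R$ runs over all non-loop circuits occurring among $N_2,\dots,N_k$ and among the circuit-constituents of $M_2,\dots,M_l$. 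Since multiplication by $x^{c}$ leaves the $\delta$-vector unchanged, it suffices to show that the coefficient sequence of $p(x)R(x)$ has a nonnegative $\delta$-vector. Note $R$ is a product of symmetric unimodal polynomials with nonnegative coefficients, hence (by the well-known closure of this class under multiplication) is itself symmetric, unimodal and nonnegative with $R(0)=1$; likewise $p(0)=1$ equals the leading coefficient of $p$, so all the $\delta$-vectors below are well defined.

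\medskip\noindent\textbf{Step 1: the $\delta$-vector of $N_1$ is nonnegative.} Adding an edge $e$ between the two vertices of $G_1$ of degree $\ge 3$ turns $G_1$ into a parallel connection of cycles $C_{a_1+1},\dots,C_{a_m+1}$ with base-edge $e$, where $a_1,\dots,a_m \ge 2$ are the lengths of the $m$ internally disjoint paths of $G_1$; contracting $e$ from this parallel connection yields the direct sum of cycle matroids $M(C_{a_1}),\dots,M(C_{a_m})$. Feeding this into the deletion--contraction formula $h(N;x) = h(N-e;x) + h(N/e;x)$ of Lemma~\ref{lm25}(iv) together with Lemma~\ref{lm25}(ii),(v) gives, after simplification,
\[
 p(x) = \prod_{i=1}^m [a_i] - x^{m-1}\prod_{i=1}^m [a_i-1], \qquad [n] := 1 + x + \cdots + x^{n-1}.
\]
Writing $A = \prod_i [a_i]$ (degree $d := \sum_i (a_i-1)$) and $B = \prod_i [a_i-1]$ (degree $d-m$), both palindromic, unimodal and nonnegative, and using $h(N_1;x) = x\,p(x)$, a direct manipulation exploiting the palindromicity of $A$ and $B$ gives $\delta_i(N_1) = B_{i-1} - B_{i-m+1}$ for all $i$ (with $B_j = 0$ for $j\notin\{0,\dots,d-m\}$). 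It remains to verify $B_{i-1} \ge B_{i-m+1}$ for $1 \le i \le \lfloor d/2\rfloor$. If $i-m+1 < 0$ this is clear; otherwise a short case analysis using $m \ge 3$, the palindromicity and unimodality of $B$, and the bound $i \le d/2$ (which, after reflecting the larger of the two indices via palindromicity of $B$, places both on the nondecreasing part of $B$; the potentially troublesome range where $i-1 > \deg B$ while $i-m+1 \ge 0$ turns out to be empty) gives the inequality. Hence $\delta_i(N_1) \ge 0$ for all $i$.

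\medskip\noindent\textbf{Step 2: multiplying by $R$.} For a polynomial $p$ of degree $d$ and a palindromic polynomial $R$ of degree $b$ with $R_j = R_{b-j}$, substituting $i \mapsto d-i$ in $(pR)_{d+b-k}$ and using this symmetry yields
\[
 \delta_k(pR) = \sum_{0 \le i < d/2} \delta_i(p)\,\bigl(R_{k-i} - R_{k-d+i}\bigr), \qquad 0 \le k \le \lfloor (d+b)/2\rfloor,
\]
with $\delta_i(p) := p_i - p_{d-i}$. For such $k,i$ one has $k-i > k-d+i$ and $(k-i)+(k-d+i) = 2k-d \le b$; and for \emph{any} symmetric unimodal polynomial $R$ with nonnegative coefficients, $u > v$ together with $u+v \le b$ forces $R_u \ge R_v$ (trivial if $v<0$; otherwise $u \le b-v \le b$, and reflecting by symmetry when $u > b/2$ places both $v$ and $b-u$ on the nondecreasing part of $R$). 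Combining this with $\delta_i(p) = \delta_i(N_1) \ge 0$ from Step~1, every summand is nonnegative, so $\delta_k(pR) \ge 0$ for all $k$. Together with $\delta_0(M) = 0$ (because $M\in\mathcal S$) and $\delta_i(M) = 0$ for $i$ past the middle, this proves $\delta_i(M) \ge 0$ for all $i\ge 0$.

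\medskip I expect the main obstacle to be the case analysis in Step~1: one must combine the palindromicity and unimodality of $B$ with the numeric constraints $m \ge 3$ and $i \le d/2$ to rule out exactly those index configurations where $B_{i-1} \ge B_{i-m+1}$ could fail. I would also want to double-check the claimed simplification of $p(x)$ and the sign in $\delta_i(N_1) = B_{i-1} - B_{i-m+1}$, and to cite cleanly the closure of symmetric unimodal nonnegative polynomials under multiplication, which is what makes $R$ unimodal.
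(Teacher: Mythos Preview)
Your proof is correct and follows essentially the same route as the paper: both add an edge $e$ between the two branch vertices of the $K_{2,m}$-subdivision to write $h(N_1;x)$ via deletion--contraction as a difference of products of $[a_i]$'s, reduce $\delta_i(N_1)$ to an inequality between two coefficients of the symmetric unimodal polynomial $B=\prod_i[a_i-1]$ (which is the paper's $h(\tilde M;x)/x^m$, with $\tilde M$ a direct sum of circuits), and then handle the remaining circuit and coloop factors by the principle that multiplying by a symmetric unimodal polynomial preserves nonnegativity of the $\delta$-vector. The only organizational difference is that the paper packages this last step as Lemma~\ref{lm63}(ii) and applies it one circuit at a time to reduce to the parallel irreducible case up front, whereas your Step~2 proves the general multiplication principle directly and applies it once to the product $R(x)$; your case analysis at the end of Step~1 is likewise the same as the paper's appeal to Lemma~\ref{lm63}(i).
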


Throughout this section, the notation $h_i(M)$ is used to denote the $(i+1)$-th entry of the $h$-vector of a broken circuit complex of $M$. We will need the following lemma.

\begin{lemma}\label{lm63}
For $M\in\mathcal{S}$, the following statements hold.
\begin{enumerate}
 \item
If $M\in\mathcal{S}_0$, then $h_0(M)\leq h_1(M)\leq\cdots\leq h_{\lfloor s/2\rfloor}(M)\geq h_{\lfloor s/2\rfloor+1}(M)\geq\cdots\geq h_s(M)$, where $s$ is the largest index such that $h_s(M)\ne0$.

\item
Assume $M$ is either the direct sum or the parallel connection of a matroid $N$ with a circuit. If $\delta_i(N)\geq0$ for all $i\geq0$, then $\delta_i(M)\geq0$ for all $i\geq0$.

\item
Assume $M$ is either the direct sum or the parallel connection of two matroids $N_1,N_2$. If $\delta_2(N_1),\delta_2(N_2)\geq0$, then $\delta_2(M)\geq0$.
\end{enumerate}
\end{lemma}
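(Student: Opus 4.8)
\textbf{Proof proposal for Lemma \ref{lm63}.} The plan is to translate everything into a single elementary device. For $N\in\mathcal{S}$ with $s_N$ the last index where $h_{s_N}(N)\ne0$, set $\widehat h_N(x)=\sum_{i=0}^{s_N}h_i(N)x^i$ and $\Delta_N(x)=x^{s_N}\widehat h_N(1/x)-\widehat h_N(x)=\sum_{i=0}^{s_N}\bigl(h_{s_N-i}(N)-h_i(N)\bigr)x^i$, so that $\Delta_N$ is anti-palindromic, $[x^i]\Delta_N\ge0$ for $i\le s_N/2$ and $[x^i]\Delta_N\le0$ for $i\ge s_N/2$, and $[x^i]\Delta_N=\delta_i(N)$ for $i\le\lfloor s_N/2\rfloor$. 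Thus ``$\delta_i(N)\ge0$ for all $i$'' is equivalent to ``$\Delta_N$ has nonnegative coefficients in all degrees $\le s_N/2$''. From Lemma \ref{lm25}(ii),(v) one gets that $\widehat h$ is multiplicative for both operations, $\widehat h_{N_1\oplus N_2}=\widehat h_{N_1}\widehat h_{N_2}=\widehat h_{P(N_1,N_2)}$ (the extra $x^{-1}$ in the parallel case only deletes a power of $x$ that was stripped off anyway), and $\widehat h_{C_{a+1}}=1+x+\cdots+x^{a-1}=:[a]_x$.

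For (i): by Lemma \ref{lm26} there is an ordering for which $BC(M,<)$ is a complete intersection, hence its $h$-vector (which is independent of the ordering) is the coefficient sequence of a product $\prod_j[d_j]_x$. Such products are symmetric and unimodal (a classical fact; equivalently it follows by induction, since multiplying a symmetric unimodal nonnegative polynomial by $[a]_x$ replaces each coefficient by a sum of $a$ consecutive ones, which preserves symmetry and weak unimodality). A symmetric unimodal sequence indexed by $0,\dots,s$ attains its maximum at the central indices $\lfloor s/2\rfloor$, which is exactly the asserted chain of inequalities.

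For (ii): by the remarks above $\widehat h_M=\widehat h_N\cdot[a]_x$ in all cases, and since $[a]_x$ is palindromic this gives $\Delta_M=\Delta_N\cdot[a]_x$, i.e. $\delta_j(M)=[x^j]\Delta_M=\sum_{i=j-a+1}^{j}c_i$ with $c_i:=[x^i]\Delta_N$ (and $c_i=0$ outside $[0,s_N]$). I must show this window sum is $\ge0$ whenever $j\le\lfloor s_M/2\rfloor$, where $s_M=s_N+a-1$. If $j<s_N/2$ the whole window lies in $[0,s_N/2)$ and the sum is of nonnegative terms. If $j\ge s_N/2$, then for each index $i$ in the window $W=\{j-a+1,\dots,j\}\cap[0,s_N]$ with $i>s_N/2$ (where $c_i\le0$) the reflected index $s_N-i$ lies again in $W$: indeed $s_N-i<s_N/2\le j$, while $s_N-i\ge j-a+1$ amounts to $i\le s_M-j$, which holds since $i\le j\le s_M/2\le s_M-j$. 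The reflections are distinct and lie in $W\cap[0,s_N/2)$, so cancelling each $c_i$ ($i>s_N/2$) against $c_{s_N-i}=-c_i$ leaves a sub-sum of nonnegative coefficients; hence $\delta_j(M)\ge0$. \emph{This window/reflection bookkeeping, together with getting the parity of $s_N$ and the boundary index $i=s_N/2$ right, is the main obstacle of the whole lemma}; parts (i) and (iii) are then short.

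For (iii): we may assume $s_M\ge4$ (otherwise $\delta_2(M)=0$) and that $N_1,N_2$ are nontrivial (if $s_{N_k}=0$ then $\widehat h_M=\widehat h_{N_{3-k}}$ and $\delta_2(M)=\delta_2(N_{3-k})\ge0$ by hypothesis). Also $N_1,N_2\in\mathcal{S}$, e.g. because $1=h_{s_M}(M)=h_{s_{N_1}}(N_1)h_{s_{N_2}}(N_2)$. From $\widehat h_M=\widehat h_{N_1}\widehat h_{N_2}$ and $\widehat h_{N_k}=\widehat h_{N_k}^{*}-\Delta_{N_k}$ one obtains $\Delta_M=\widehat h_{N_1}^{*}\Delta_{N_2}+\Delta_{N_1}\widehat h_{N_2}^{*}-\Delta_{N_1}\Delta_{N_2}$ wait --- more simply, writing $H_k:=\widehat h_{N_k}^{*}$ and using $\Delta_{N_k}=H_k-\widehat h_{N_k}$, one gets $\Delta_M=\widehat h_{N_1}\Delta_{N_2}+\Delta_{N_1}\widehat h_{N_2}+\Delta_{N_1}\Delta_{N_2}$. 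Extracting the $x^2$-coefficient and using $[x^0]\Delta_{N_k}=\delta_0(N_k)=0$, $[x^1]\Delta_{N_k}=:d_k=\delta_1(N_k)\ge0$ (Lemma \ref{lm25}(vii)), $[x^2]\Delta_{N_k}=:e_k$, $[x^0]\widehat h_{N_k}=1$, $[x^1]\widehat h_{N_k}=h_1(N_k)=:p_k$, $[x^2]\widehat h_{N_k}=h_2(N_k)\ge0$, I get
$$\delta_2(M)=[x^2]\Delta_M=(e_1+e_2)+(p_1d_2+p_2d_1+d_1d_2).$$
Now $e_k+d_k=\sum_{j=0}^{2}\bigl(h_{s_{N_k}-j}(N_k)-h_j(N_k)\bigr)\ge0$: this is $0$ when $s_{N_k}\le2$, and for $s_{N_k}\ge2$ it is Lemma \ref{lm25}(vi) with $i=2$ (applicable since $N_k$ is graphic, hence representable) together with $h_0(N_k)=h_{s_{N_k}}(N_k)=1$. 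Also $p_k=h_1(N_k)\ge1$ for nontrivial $N_k$ (if $h_1(N_k)=0$ then $\overline{N_k}$ is a direct sum of coloops and $s_{N_k}=0$). Hence
$$\delta_2(M)\ge-(d_1+d_2)+p_1d_2+p_2d_1+d_1d_2=d_1(p_2-1)+d_2(p_1-1)+d_1d_2\ge0,$$
which completes the proof.
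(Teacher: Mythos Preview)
Your proof is correct and rests on the same underlying device as the paper's, namely the multiplicativity of the ``stripped'' $h$-polynomial $\widehat h_M$ under direct sum and parallel connection together with $\widehat h_{C_{a+1}}=[a]_x$ from Lemma~\ref{lm25}(ii),(v). Parts (i) and (ii), which the paper dismisses in one line, you carry out in full; the window/reflection argument in (ii) is exactly the ``easy'' verification the paper has in mind. The one genuine difference is in (iii): the paper simply expands $h_2(M)$ and $h_{s-2}(M)$ and writes $\delta_2(M)=\delta_2(N_1)+\delta_2(N_2)+\bigl(h_{s_1-1}(N_1)h_{s_2-1}(N_2)-h_1(N_1)h_1(N_2)\bigr)$, then invokes the hypothesis $\delta_2(N_k)\ge0$ and $\delta_1(N_k)\ge0$ directly. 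Your route instead bounds $e_k\ge -d_k$ via the partial-sum inequality of Lemma~\ref{lm25}(vi) and uses $p_k\ge1$; curiously, this never calls on the assumption $\delta_2(N_k)\ge0$ at all, so you in fact prove a slightly stronger statement than (iii). That is fine, but the paper's version is shorter precisely because it spends the hypothesis. (As a cosmetic matter, the ``wait --- more simply'' aside should be excised; the identity $\Delta_M=\widehat h_{N_1}\Delta_{N_2}+\Delta_{N_1}\widehat h_{N_2}+\Delta_{N_1}\Delta_{N_2}$ you land on is the correct one.)
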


\begin{proof}
 Using Lemmas \ref{lm25}(ii) and \ref{lm26} we can reduce the proof of (i) to the case where $M$ is a circuit. The claim holds in this case by Lemma \ref{lm25}(v). The proof of (ii) also follows easily from Lemma \ref{lm25}(ii), (v). We now prove (iii). Suppose that $s,s_1,s_2$ are largest indices such that $h_s(M),h_{s_1}(N_1),h_{s_2}(N_2)\ne0$. We only consider the case $s_1,s_2\geq4$, the other cases are left to the reader. By Lemma \ref{lm25}(ii), $h_s(M)=h_{s_1}(N_1)h_{s_2}(N_2).$ Since $M\in\mathcal{S}$, $h_s(M)=h_{s_1}(N_1)=h_{s_2}(N_2)=1$. So from Lemma \ref{lm25}(ii) we get
\[\begin{aligned}
   h_2(M)&=h_2(N_1)+h_2(N_2)+h_1(N_1)h_1(N_2),\\
 h_{s-2}(M)&=h_{s_1-2}(N_1)+h_{s_2-2}(N_2)+h_{s_1-1}(N_1)h_{s_2-1}(N_2).
  \end{aligned}
\]
It follows that
\[\delta_2(M)=h_{s-2}(M)-h_2(M)=\delta_2(N_1)+\delta_2(N_2)+\epsilon\geq \epsilon,\]
where $\epsilon=h_{s_1-1}(N_1)h_{s_2-1}(N_2)-h_1(N_1)h_1(N_2)$. From $h_{s_i-1}(N_i)-h_1(N_i)=\delta_1(N_i)\geq0$ for $i=1,2$ we have $\epsilon\geq 0$. Therefore, $\delta_2(M)\geq0$.
\end{proof}

We now prove Proposition \ref{pr62}.

\begin{proof}[Proof of Proposition \ref{pr62}]
 From the characterization of the class $\mathcal{S}_1$ in Proposition \ref{pr51} and from Lemma \ref{lm63}(ii), we may assume that $M$ is parallel irreducible, i.e. $M$ is the cycle matroid of a graph $G$ which is a subdivision of $K_{2,m}$ with $m\geq3$. Denote by $G^+$ the graph obtained from $G$ by adding an edge $e$ between the two vertices of degree $m$ of $G$. Let $M^+=M(G^+)$ and $\tilde{M}=M^+/e$. Then $M^+$ is the parallel connection of $m$ circuits at the basepoint $e$ and $\tilde{M}$ is the direct sum of $m$ circuits. It follows that $M^+,\tilde{M}\in \mathcal{S}_0$. Let $s=r(M)-1$. Then by Lemma \ref{lm25}(iii), $s$ is the largest index with $h_s(M), h_s(M^+)\ne0$ and $s-m$ is the largest index with $h_{s-m}(\tilde{M})\ne 0$. We need to prove that $ \delta_i(M)\geq0$ for $i=1,\ldots,\lfloor s/2\rfloor$. Since $M=M^+-e$, from Lemma \ref{lm25}(iv) we get $h_i(M)=h_i(M^+)-h_{i-1}(\tilde{M})$ for $i=1,\ldots,s$. Hence for $i=1,\ldots,\lfloor s/2\rfloor$,
\[\begin{aligned}
   \delta_i(M)&=h_{s-i}(M)-h_i(M)=(h_{s-i}(M^+)-h_{s-i-1}(\tilde{M}))-(h_i(M^+)-h_{i-1}(\tilde{M}))\\
                      &=(h_{s-i}(M^+)-h_i(M^+))+(h_{i-1}(\tilde{M})-h_{s-i-1}(\tilde{M}))\\
                      &=h_{i-1}(\tilde{M})-h_{s-i-1}(\tilde{M}).
  \end{aligned}\]
The last equality follows since $M^+\in \mathcal{S}_0$; see Lemma \ref{lm26}. Now from $\tilde{M}\in \mathcal{S}_0$ we have $h_{i-1}(\tilde{M})=h_{s-m-i+1}(\tilde{M})$. Note that for $i=1,\ldots,\lfloor s/2\rfloor$, either $i-1$ or $s-m-i+1$ belongs to the interval $[\lfloor (s-m)/2\rfloor, s-i-1]$. Therefore, from Lemma \ref{lm63}(i) we conclude that $\delta_i(M)=h_{i-1}(\tilde{M})-h_{s-i-1}(\tilde{M})\geq0$.
\end{proof}

 For the proof of Theorem \ref{th61}, we will use the following calculation of $h_2(M)$ which was done in \cite[Theorem 5]{Bro}.

\begin{lemma}\label{lm64}
Let $M$ be a simple graphic matroid of rank $r$ on $n$ elements. Denote by $t(M)$ the number of $3$-circuits of $M$. Then $h_2(M)=\binom{n-r+1}{2}-t(M)$ .
\end{lemma}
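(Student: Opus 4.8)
The plan is to read off $h_2(M)$ from the $f$-vector of the broken circuit complex $BC(M)$, using the no-broken-circuit description of its faces together with the $f$-to-$h$ conversion recalled in Section~2. Fixing any linear order on the ground set $E$, one has $h_2 = f_2 - (r-1)f_1 + \binom{r}{2}f_0$, where $f_i$ is the number of $i$-subsets of $E$ containing no broken circuit. First I would note that, $M$ being simple, there are no loops and no $2$-circuits, hence no broken circuit of cardinality at most $1$; consequently $f_0 = 1$ and $f_1 = n$. So the whole problem reduces to computing $f_2$.

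Next I would identify the $2$-element non-faces of $BC(M)$. A $2$-subset $\{a,b\}$ fails to be a face exactly when it contains a broken circuit, and since there is no broken circuit of size $\le 1$ this means $\{a,b\}$ is itself a broken circuit, i.e. $\{a,b\} = C \setminus \{\min C\}$ for some $3$-circuit $C$. Thus $f_2 = \binom{n}{2} - b_2$, where $b_2$ counts the $2$-element broken circuits. The key step is to show $b_2 = t(M)$, i.e. that $C \mapsto C \setminus \{\min C\}$ is a bijection from the set of $3$-circuits of $M$ onto the set of $2$-element broken circuits. Surjectivity is immediate. For injectivity I would use graphicness: writing $M = M(G)$ with $G$ simple (possible since $M$ is simple), the $3$-circuits of $M$ are the triangles of $G$, and two distinct triangles of a simple graph share at most one edge; hence two distinct $3$-circuits share at most one element and therefore have distinct $2$-subsets as images. (In matroid language: if two $3$-circuits shared two elements $a,b$, then $a,b$ and the two differing third elements would all lie in the closure of $\{a,b\}$, a rank-$2$ flat, forcing a $U_{2,4}$-restriction, which a simple graphic matroid cannot have.)

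Finally I would substitute $f_0 = 1$, $f_1 = n$, $f_2 = \binom{n}{2} - t(M)$ into $h_2 = f_2 - (r-1)f_1 + \binom{r}{2}f_0$ and simplify the matroid-independent part: putting everything over denominator $2$ gives $\binom{n}{2} - (r-1)n + \binom{r}{2} = \frac{1}{2}\big((n-r)^2 + (n-r)\big) = \binom{n-r+1}{2}$, so that $h_2(M) = \binom{n-r+1}{2} - t(M)$, as claimed.

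The only genuinely non-formal step is the injectivity of $C \mapsto C \setminus \{\min C\}$; this is precisely where simplicity enters and cannot be weakened, and it is cleanest to argue it on the graph side. Everything else is bookkeeping with the $f$-to-$h$ conversion. (A deletion--contraction recursion on $h(M;x) = t(M;x,0)$ would also yield the identity, but less transparently.)
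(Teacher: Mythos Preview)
Your proof is correct. The paper itself does not prove this lemma; it simply imports the formula from \cite[Theorem~5]{Bro}. Your argument supplies a self-contained proof via the $f$-to-$h$ conversion and a direct count of $2$-element broken circuits, which is exactly the natural approach for a statement of this shape.

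One small remark on your closing comment: simplicity is needed already for $f_0=1$ and $f_1=n$ (to rule out broken circuits of size $\le 1$), while the injectivity step uses \emph{both} simplicity and the absence of a $U_{2,4}$-restriction. Graphicness enters only through the latter, so your argument in fact yields the formula for any simple binary matroid (indeed, for any simple matroid in which every line has at most three points). This is a mild strengthening of what the lemma states, and costs nothing extra.
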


\begin{proof}[Proof of Theorem \ref{th61}]
We use induction on the rank $r$ of $M$. If $r\leq 5$, then $\delta_2(M)=0$ by definition. Let $r\geq6$. By Lemma \ref{lm63}(iii), we may assume $M$ is parallel irreducible (thus, in particular, $M$ is simple). Furthermore, only the case $\delta_1(M)\geq2$ needs to be considered, by virtue of Proposition \ref{pr62}. Let $G$ be a block with $M=M(G)$. For an ear decomposition $\Pi$ of $G$, let $I$ be a nest interval which is lined in $G$. Such an $I$ exists by Lemma \ref{lm46}. Since $M$ is parallel irreducible, from Corollary \ref{lm43} we have $\ell(I)>1$. Let $\pi_i\in \sigma(I)$ and let $e\in\pi_i$. It is evident that every cycle of $G$ containing $e$ must have length at least 4. So $M/e$ is a simple connected matroid of rank $r-1$. Assume that $e$ is contained in $k$ $4$-cycles of $G$. Then $h_2(M)=h_2(M/e)+k$ by Lemma \ref{lm64}. On the other hand, from Lemma \ref{lm25}(iv) we get $h_{r-3}(M)=h_{r-3}(M-e)+h_{r-4}(M/e)$. Hence by the induction hypothesis,
\[\begin{aligned}
\delta_2(M)&=h_{r-3}(M)-h_2(M)= (h_{r-4}(M/e)-h_2(M/e))+(h_{r-3}(M-e)-k)\\
&=\delta_2(M/e)+(h_{r-3}(M-e)-k)\geq h_{r-3}(M-e)-k.
\end{aligned}\]
To complete the proof, we will show that $h_{r-3}(M-e)-k\geq0$. The case $k=0$ is obvious. Suppose now that $k>0$. Then $\pi_i$ must have length 2. Let $e'$ be the edge of $\pi_i$ other than $e$. Clearly, $M-e=\{e'\}\oplus M_1$, where $M_1$ is the cycle matroid of the graph $G_1$ obtained from $G$ by removing the ear $\pi_i$. By Lemma \ref{lm25}(iv), the $h$-vectors of broken circuit complexes of $M-e$ and $M_1$ coincide. This implies 
\[\delta_1(M_1)=\delta_1(M-e)=h_{r-3}(M-e)-h_1(M-e).\]
 (Note that $r-2$ is the largest index with $h_{r-2}(M-e)\ne0$ because $M-e$ has 2 connected components.) By Corollary \ref{co411}, $\delta_1(M_1)\geq \delta_1(M)-1\geq 1$. It follows that
 \[h_{r-3}(M-e)\geq h_1(M-e)+1=h_1(M-e)+h_0(M/e)=h_1(M).\]
 Now $h_1(M)$ is the nullity $n(G)$ of $G$ by Lemma \ref{lm25}(i). Since there are $k$ $4$-cycles of $G$ containing $e$, $\sigma(I)$ must have at least $k-1$ ears. We deduce that $n(G)\geq k$. Therefore, $h_{r-3}(M-e)\geq k$ and the proof of the theorem is complete.
\end{proof}

\noindent\textbf{Acknowledgements.} The author would like to thank Tim R\"{o}mer for helpful discussions and suggestions. He is grateful to the referees whose valuable comments substantially improved the paper and made it accessible to a broader audience. Thanks are also due to the Ministry of Education and Training of Vietnam and the German Academic Exchange Service (DAAD) for their financial support.


\begin{thebibliography}{99}

\bibitem{B}
A. Bj\"{o}rner, \emph{The homology and shellability of matroids and geometric lattices}. In \emph{Matroid Applications}, 226--283, Encyclopedia Math. Appl. {\bf 40}, Cambridge University Press, Cambridge, 1992.

\bibitem{BZ}
A. Bj\"{o}rner and G. Ziegler, \emph{Broken circuit complexes: factorizations and generalizations}. J. Combin. Theory Ser. B {\bf 51} (1991), no. 1, 96--126.

\bibitem{Bo}
J. A. Bondy, \emph{Transversal matroids, base-orderable matroids, and graphs}.
Quart. J. Math. Oxford Ser. (2) {\textbf 23} (1972), 81--89.

\bibitem{Bro}
J. I. Brown, \emph{Chromatic polynomials and order ideals of monomials. }
Discrete Math. \textbf{189} (1998), no. 1-3, 43--68.

\bibitem{Br2}
T. Brylawski, \emph{A combinatorial model for series-parallel networks}. Trans. Amer. Math. Soc. {\bf 154} (1971), 1--22.


\bibitem{Br}
T. Brylawski, \emph{The broken-circuit complex}. Trans. Amer. Math. Soc. {\bf 234} (1977), 417--433.

\bibitem{Br5}
T. Brylawski, \emph{The Tutte polynomial. I. General theory}. In \emph{Matroid theory and its applications}, 125--275, Liguori, Naples, 1982.


\bibitem{Br4}
T. Brylawski, \emph{Constructions}. In \emph{Theory of matroids}, 127--223,
Encyclopedia Math. Appl. {\bf 26}, Cambridge University Press Press, Cambridge, 1986.


\bibitem{BrOx}
T. Brylawski and J. Oxley,
\emph{The broken-circuit complex: its structure and factorizations}.
European J. Combin. {\bf 2} (1981), no. 2, 107--121.

\bibitem{BrOx2}
T. Brylawski and J. Oxley,
\emph{The Tutte polynomial and its applications}. In \emph{Matroid Applications}, 123--225, Encyclopedia Math. Appl. {\bf 40}, Cambridge University Press, Cambridge, 1992.

\bibitem{C}
M. K. Chari, \emph{Two decompositions in topological combinatorics with applications to matroid complexes.} Trans. Am. Math. Soc. {\bf349} (1997), 3925--3943.

\bibitem{CH}
G. Chartrand and F. Harary,
\emph{Planar permutation graphs}. Ann. Inst. H. Poincar\'{e} Sect. B (N.S.) {\textbf 3} (1967), 433--438.

\bibitem{Cr}
H. Crapo, \emph{A higher invariant for matroids}. J. Combinatorial Theory {\bf 2} (1967), 406--417.

\bibitem{DW}
J. de Sousa and D. J. A. Welsh, \emph{A characterisation of binary transversal structures}.
J. Math. Anal. Appl. {\textbf40} (1972), 55--59.


\bibitem{EPY}
D. Eisenbud, S. Popescu and S. Yuzvinsky, \emph{Hyperplane arrangement cohomology and monomials in the exterior algebra}. Trans. Amer. Math. Soc. {\bf 355} (2003), 4365--4383.

\bibitem{Ep}
D. Eppstein, \emph{Parallel recognition of series-parallel graphs}. Inform. and Comput. {\bf 98} (1992), no. 1, 41--55.

\bibitem{Fe}
N. Fenton, \emph{Characterization of atomic matroids}. Quart. J. Math. Oxford Ser. (2) {\bf 34} (1983), no. 133, 49--60.

\bibitem{GMNN}
A. J. Goodall, A. de Mier, S. D. Noble and M. Noy, \emph{The Tutte polynomial characterizes simple outerplanar graphs.} Combin. Probab. Comput. \textbf{20} (2011), no. 4, 609--616.

\bibitem{Hi1}
T. Hibi, \emph{What can be said about pure O-sequences?}. J. Combin. Theory Ser. A {\bf 50} (1989), 319--322.

\bibitem{Hi2}
T. Hibi, \emph{Face number inequalities for matroid complexes and Cohen--Macaulay types of Stanley--Reisner rings of distributive lattices}. Pacific J. Math. {\bf 154}(1992), 253--264.

\bibitem{Hu}
J. Huh, \emph{$h$-vectors of matroids and logarithmic concavity}.  Adv. Math. {\bf 270} (2015), 49--59. 

\bibitem{KR}
G. K\"{a}mpf and  T. R\"{o}mer, \emph{Homological properties of Orlik-Solomon algebras}. Manuscripta Math. {\bf 129} (2009), 181--210.

\bibitem{Le}
D. V. Le, \emph{On the Gorensteinness of broken circuit complexes and Orlik--Terao ideals}.  J. Combin. Theory Ser. A {\bf 123} (2014), no. 1, 169--185.

\bibitem{LR}
D. V. Le and T. R\"{o}mer, \emph{Broken circuit complexes and hyperplane arrangements}. J. Algebraic Combin. {\bf 38} (2013), no. 4, 989--1016.

\bibitem{Mc}
T. McKee, \emph{Induced cycle structure and outerplanarity.}
Discrete Math. {\textbf 223} (2000), no. 1-3, 387--392.


\bibitem{O3}
J. Oxley, \emph{On Crapo's beta invariant for matroids}. Stud. Appl. Math. {\bf 66} (1982), no. 3, 267--277.

\bibitem{O}
J. Oxley, \emph{Matroid theory}. Oxford Graduate Texts in Mathematics, vol. {\bf 3}, Oxford
University Press, London, 2006.



\bibitem{Pr}
J. S. Provan, \emph{Decompositions, shellings, and diameters of simplicial complexes and convex polyhedra}. Thesis, Cornell University, Ithaca, NY, 1977.


\bibitem{Ro}
G.-C. Rota, \emph{On the foundations of combinatorial theory. I. Theory of M\"{o}bius functions}, Z. Wahrscheinlichkeitstheorie and Verw. Gebiete {\bf 2} (1964), 340--368.

\bibitem{St1}
R. P. Stanley, \emph{Cohen--Macaulay complexes}. In \emph{Higher combinatorics}, pp. 51--62, Reidel, Dordrecht, 1977.

\bibitem{St2}
R. P. Stanley, \emph{Combinatorics and commutative algebra}. 2nd ed., Progress in Mathematics {\bf 41}, Birkh\"{a}user, Boston, 1996. 

\bibitem{Sw}
E. Swartz, \emph{$g$-elements of matroid complexes}. J. Combin. Theory Ser. B {\bf 88} (2003), no. 2, 369--375.

\bibitem{Wh}
H. Whitney, \emph{A logical expansion in mathematics}. Bull. Amer. Math. Soc. {\bf 38} (1932), 572--579.

\bibitem{Wh2}
H. Whitney, \emph{Non-separable and planar graphs}. Trans. Amer. Math. Soc. {\bf 34} (1932), no. 2, 339--362.

 \bibitem{W}
H. Wilf, \emph{Which polynomials are chromatic?}. Proc. 1973 Rome International Colloq.
Combinatorial Theory I, pp. 247--257, Accademia Nazionale dei Lincei, Rome, 1976.

\bibitem{Za}
T. Zaslavsky, \emph{The M\"{o}bius function and the characteristic polynomial}. In \emph{Combinatorial geometries}, 114--138, Encyclopedia Math. Appl. {\bf 29}, Cambridge University Press, Cambridge, 1987.
\end{thebibliography}
\end{document}